\newtheorem{thm}{Theorem}[section]
\newtheorem{corollary}[thm]{Corollary}
\newtheorem{proposition}[thm]{Proposition}
\newtheorem{lemma}[thm]{Lemma}
\newtheorem{step}[thm]{Step}
\theoremstyle{definition}
\newtheorem{remark}[thm]{Remark}
\newcommand{\NE}{\operatorname{NE}}
\newcommand{\Pic}{\operatorname{Pic}}
\newcommand{\Ext}{\operatorname{Ext}}
\newcommand{\Exc}{\operatorname{Exc}}
\newcommand{\codim}{\operatorname{codim}}
\newcommand{\ph}{\varphi}
\newcommand{\w}{\widetilde}
\newcommand{\ma}{\mathcal}
\newcommand{\la}{\longrightarrow}
\newcommand{\ol}{\mathcal{O}}
\newcommand{\wi}{\widehat}
\newcommand{\pr}{\mathbb{P}}
\newcommand{\Q}{\mathbb{Q}}
\newcommand{\C}{\mathbb{C}}
\newcommand{\R}{\mathbb{R}}
\newcommand{\Z}{\mathbb{Z}}
\newcommand{\N}{\mathcal{N}_1}
\newcommand{\Nu}{\mathcal{N}^1}
\newcommand{\Hom}{\operatorname{Hom}}
\title[Fano manifolds with $\delta=3$]{Fano manifolds with Lefschetz defect 3}
\date{\today}
\subjclass[2020]{14J45}
\author{C.\ Casagrande}
\address{Cinzia Casagrande:
 Universit\`a di Torino,
 Dipartimento di Matematica,
via Carlo Alberto 10,
 10123 Torino - Italy}
\email{cinzia.casagrande@unito.it}
\author{E.\ A.\ Romano}
\address{Eleonora A.\ Romano: Universit\`a di Genova, Dipartimento di Matematica, via Dodecaneso 35, 16146 Genova - Italy}
\email{eleonoraanna.romano@unige.it}
\author{S.\ A.\ Secci}
\address{Saverio A.\ Secci:
 Universit\`a di Torino,
 Dipartimento di Matematica,
via Carlo Alberto 10,
 10123 Torino - Italy}
\email{saverioandrea.secci@unito.it}
\numberwithin{figure}{section}
\numberwithin{table}{section}
\numberwithin{equation}{section}
\begin{document}
\selectlanguage{english}


\begin{abstract}
  Let $X$ be a smooth, complex Fano variety,  and $\delta_X$ its Lefschetz defect. By \cite{codim}, if $\delta_X\geq 4$, then $X\cong S\times T$, where $\dim T=\dim X-2$. In this paper we prove a structure theorem for the case where $\delta_X=3$. We show that there exists a smooth Fano variety $T$ with $\dim T=\dim X-2$ such that $X$ is obtained from $T$ with two possible explicit constructions; in both cases there is a $\pr^2$-bundle $Z$ over $T$
  such that $X$ is the blow-up of $Z$ along three pairwise disjoint smooth, irreducible, codimension 2 subvarieties.
  Then we apply the structure theorem to Fano 4-folds, to the case where $X$ has Picard number 5, and to Fano varieties having  an elementary divisorial contraction sending a divisor to a curve. In particular we complete the classification of Fano 4-folds with $\delta_X=3$, started in \cite{delta3_4folds}.

  This version of the paper incorporates the published article  with its corrigendum \cite{delta3}, where a missing case in 
Prop.~\ref{newfamilies} is added.
\end{abstract}
\selectlanguage{english}
\maketitle
\section{Introduction}
\noindent Let $X$ be a smooth, complex Fano variety. The \emph{Lefschetz defect} $\delta_X$ is an invariant of $X$ which  depends on the Picard number of prime divisors in $X$.
More precisely, consider the vector space $\N(X)$ of 1-cycles in $X$, with real coefficients, up to numerical equivalence; its dimension is the Picard number $\rho_X$ of $X$.
Given a prime divisor $D$ in $X$, we consider the pushforward $\iota_*\colon\N(D)\to\N(X)$ induced by the inclusion $\iota\colon D\hookrightarrow X$, and  set $\N(D,X):=\iota_*(\N(D))\subseteq\N(X)$; finally:
$$\delta_X:=\max\bigl\{\codim\N(D,X)\,|\,D\text{ a prime divisor in }X\bigr\}.$$
The main property of the Lefschetz defect is the following.
\begin{thm}[\cite{codim}, Th.~3.3]\label{delta4}
Let $X$ be a smooth Fano variety with $\delta_X\geq 4$. Then $X\cong S\times T$, where $S$ is a del Pezzo surface with $\rho_S=\delta_X+1$. 
\end{thm}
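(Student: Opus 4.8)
The plan is to realize the product decomposition as a fibration. Fix a prime divisor $D$ with $\codim\N(D,X)=\delta_X=:\delta\geq 4$, and aim to produce a fiber type contraction $\pi\colon X\to T$ whose general fiber is a smooth del Pezzo surface $S$ with $\rho_S=\delta+1$, in such a way that $D$ meets each fiber in a curve. The reverse-engineered picture is instructive: if $X\cong S\times T$ and $D=C\times T$ for an irreducible curve $C\subset S$, then $\N(D,X)=\langle[C]\rangle\oplus\N(T)$ has codimension $\rho_S-1$ in $\N(X)=\N(S)\oplus\N(T)$, so $\delta=\rho_S-1$ exactly, and the fiber subspace $\N(S)$ is a $(\delta+1)$-dimensional subspace meeting $\N(D,X)$ in the line $\langle[C]\rangle$ and hence satisfying $\N(S)+\N(D,X)=\N(X)$. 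This tells me that the $\delta$ "missing'' directions of $\N(D,X)$ must be spanned by curve classes living in the surface fibers, and that the whole difficulty is to recover $S$, $T$, and the splitting from the single numerical datum that these directions have dimension $\geq 4$.

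First I would pass to the dual formulation. By the perfect intersection pairing between $\N(X)$ and $\Nu(X)$, together with the projection formula $\langle\iota_*\gamma,\alpha\rangle_X=\langle\gamma,\iota^*\alpha\rangle_D$, the restriction map $\iota^*\colon\Nu(X)\to\Nu(D)$ is dual to $\iota_*$, so that $\codim\N(D,X)=\dim\Ker\iota^*$. Thus the subspace $K:=\Ker\iota^*\subseteq\Nu(X)$ of divisor classes numerically trivial on $D$ has dimension $\delta\geq 4$. In the product model $K$ is precisely the class-space of divisors coming from the surface factor that pair trivially with $C$, i.e.\ $K$ is the linear shadow of the fibers I am trying to build, and the point of the construction step is to turn $K$ into geometry.

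To build $\pi$ I would use Mori theory together with the Fano hypothesis, which makes $\overline{\NE}(X)$ rational polyhedral and every extremal face contractible. The strategy is to search among the extremal contractions of $X$ for one, $\pi\colon X\to T$, of fiber type whose relative curve classes span a $(\delta+1)$-dimensional subspace $\Lambda\subseteq\N(X)$ with $\Lambda+\N(D,X)=\N(X)$ and $\dim(\Lambda\cap\N(D,X))=1$; the defect $\delta\geq 4$ is what should force such a contraction to exist, to have two-dimensional general fiber (rather than being divisorial, small, or of larger fiber dimension), and to pin down the general fiber as a smooth del Pezzo surface $S$, which one identifies because $-K_X$ restricts to an ample divisor on each fiber and $\rho_S$ must account for all $\delta+1$ fiber directions. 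I would then verify that $D$ is horizontal for $\pi$, cutting each fiber in a curve $C$, which recovers the expected shape of $D$. Justifying the existence and fiber dimension of $\pi$ from the numerical datum alone is already substantial Mori-theoretic work.

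The hard part, and the real content of the theorem, is the final upgrade from a del Pezzo fibration to an honest product $X\cong S\times T$, where I expect two obstacles. First one must establish a numerical splitting $\N(X)=\N(S)\oplus\N(T)$, with the horizontal summand canonically identified with $\N(T)$; this should follow from $\dim K=\delta$ together with the fact that the fibers contribute exactly $\delta+1$ independent classes, so that no monodromy twists the two summands. Second, and most delicate, one must promote the fiber bundle to a global product by constructing the second projection $X\to S$. I would try to do this by propagating the finitely many $(-1)$-curve contractions of a single del Pezzo fiber across all of $T$: each such extremal ray on $S$ should extend to a relatively extremal ray of $X/T$ whose contraction is a genuine divisorial or fiber type contraction of $X$, and running these in parallel over $T$ should rigidify the family and force it to be constant. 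Controlling this globalization — showing the fiberwise extremal structure is locally constant and that the Fano condition forbids any nontrivial gluing — is where I expect the main technical effort to lie, and is presumably the step carried out with the sharper machinery of \cite{codim}.
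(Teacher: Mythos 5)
First, a point of context: the paper does not prove Th.~\ref{delta4} at all --- it is imported verbatim from \cite{codim}, Th.~3.3, and the only trace of that proof visible here is the factorization recalled in Th.~\ref{factorization}. So your attempt has to be measured against the proof in \cite{codim}. Your set-up is correct: the duality $\codim\N(D,X)=\dim\Ker\bigl(\iota^*\colon\Nu(X)\to\Nu(D)\bigr)$ and the reverse-engineered picture $\N(C\times T,\,S\times T)=\R[C]\oplus\N(T)$ are both right, and you correctly isolate the two things that must be proven. But everything past that point is a research plan rather than an argument: the two steps you describe as ``substantial Mori-theoretic work'' and as ``presumably the step carried out with the sharper machinery of \cite{codim}'' \emph{are} the theorem, and the proposal proves neither.

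Concretely, the gaps are these. (a) You posit that $\delta_X\geq 4$ ``should force'' the existence of a fiber type contraction whose relative cone is a $(\delta_X+1)$-dimensional extremal face of $\NE(X)$ meeting $\N(D,X)$ in a line, but you give no mechanism that produces such a face; the cone theorem guarantees contractibility of faces that exist, not the existence of a face in a prescribed linear position. The quoted proof does not locate the fibration this way: it uses that a Fano manifold is a Mori dream space and runs a Mori program for $-D$, with the hypothesis $\codim\N(D,X)\geq 3$ controlling the exceptional loci at every step; what comes out is the factorization $X\xrightarrow{f}X_2\xrightarrow{\psi}Y\xrightarrow{\xi}T$ of Th.~\ref{factorization} (two smooth $\pr^1$-fibrations and a blow-up along two disjoint codimension-$2$ centers), and the del Pezzo fibration $\sigma$ is this composite, built up from one-dimensional pieces rather than found as a single face. (b) Your upgrade from fibration to product consists of naming the desired conclusions (``no monodromy twists the summands'', ``propagate the $(-1)$-curve contractions'', ``rigidify the family''), not of arguments for them. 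The steps that actually do this work require specific tools you do not supply: Wi\'sniewski's deformation result \cite{wisndef} to identify $\N(\text{fiber})$ with $\ker\sigma_*$ once smoothness of the fibration is known, the extension of fiberwise contractions to relative contractions over $T$ (compare Step \ref{Z} in this paper), and product criteria such as \cite[Lemma 4.9]{31}, used for exactly this purpose in Lemma~\ref{levico} here; in \cite{codim} the hypothesis $\delta_X\geq 4$ (as opposed to $\delta_X=3$) is what forces the blow-up centers to be empty and the $\pr^1$-fibrations to split, and none of that analysis appears in your proposal.
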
  
In this paper we consider Fano varieties $X$ with $\delta_X=3$. Even if such $X$ does not need to be a product, it turns out that it still has a very rigid and explicit structure.  More precisely, we show that there exist a smooth Fano variety $T$ with $\dim T=\dim X-2$, $\rho_T=\rho_X-4$, and $\delta_T\leq 3$,
and a $\pr^2$-bundle $Z$ over $T$,
such that $X$ is obtained by blowing-up $Z$ along three pairwise disjoint smooth, irreducible, codimension 2 subvarieties $S_1,S_2,S_3$. The $\pr^2$-bundle $\ph\colon Z\to T$ is the projectivization of a suitable decomposable vector bundle on $T$, and $S_2$ and $S_3$ are sections of $\ph$. Instead $\ph_{|S_1}\colon S_1\to T$ is finite of degree $1$ or $2$.

To state our results, we first describe two explicit ways to obtain $X$ from $T$; the two constructions differ in the degree of $S_1$ over $T$. Then we prove (Th.~\ref{main}) that every Fano variety $X$ with Lefschetz defect $3$ is obtained by one of these two constructions; this is our main result.
\begin{proposition}[Construction A]\label{A}
Let $T$ be a smooth Fano variety with $\delta_T\leq 3$, and $D_1,D_2,D_3$ divisors on $T$ such that $-K_T+D_i-D_j$ is ample for every $i,j\in\{1,2,3\}$. 
Set 
$$Z:=\pr_T\bigl(\ol(D_1)\oplus\ol(D_2)\oplus\ol(D_3)\bigr)\stackrel{\ph}{\la} T,$$
and let 
$S_i\subset Z$ be the section of $\ph$ corresponding to the projection onto the summand $\ol_T(D_i)$, for $i=1,2,3$.
Finally let $h\colon X\to Z$  be the blow-up of $S_1,S_2,S_3$, and $\sigma:=\ph\circ h\colon X\to T$.

Then $X$ is a smooth Fano variety with $\delta_X=3$, $\dim X=\dim T+2$, and $\rho_X=\rho_T+4$.
\end{proposition}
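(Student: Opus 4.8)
The statements on $\dim X$, smoothness and $\rho_X$ are immediate. The three sections are pairwise disjoint, since in each fibre $\ph^{-1}(t)\cong\pr^2$ they are the three (non-collinear) coordinate points associated to the summands $\ol(D_i)$; hence $h$ blows up a smooth variety along smooth, disjoint centres of codimension $2$, so $X$ is smooth, $\dim X=\dim Z=\dim T+2$, and $\rho_X=\rho_Z+3=(\rho_T+1)+3=\rho_T+4$. Writing $\xi$ for the tautological class on $Z$ with $\ph_*\ol(\xi)=\ol(D_1)\oplus\ol(D_2)\oplus\ol(D_3)$, the relative canonical bundle formula gives $-K_Z=3\xi+\ph^*(-K_T-D_1-D_2-D_3)$, and since each centre has codimension $2$ we obtain $-K_X=3h^*\xi+\sigma^*(-K_T-D_1-D_2-D_3)-E_1-E_2-E_3$, where $E_i$ is the exceptional divisor over $S_i$.

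To prove that $X$ is Fano I would verify, by Kleiman's criterion, that $-K_X\cdot C>0$ for every irreducible curve $C$, separating the curves contracted by $\sigma$ (vertical) from the rest (horizontal). Every fibre $F=\sigma^{-1}(t)$ is the blow-up of $\pr^2$ at the three non-collinear points $S_i\cap\ph^{-1}(t)$, i.e.\ a del Pezzo surface of degree $6$; as the normal bundle of a smooth fibre is trivial, adjunction gives $-K_X|_F=-K_F$, which is ample. Thus $-K_X\cdot C>0$ for every vertical $C$, and in particular $-K_X$ is $\sigma$-ample.

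The crux is the horizontal curves, and this is exactly where the hypothesis that $-K_T+D_i-D_j$ be ample enters. The minimal horizontal curves of the del Pezzo fibration $\sigma$ lie in the ``toric boundary'' of the fibration: over a curve $\gamma\subset T$ they are the two sections of $p_i\colon E_i\to S_i\cong T$, together with the sections contained in the strict transforms $M_i$ of the divisors of $Z$ swept out by the lines $\overline{S_jS_k}$. Restricting the formula above to $E_i=\pr(N_{S_i/Z})$, where $N_{S_i/Z}=\ol(D_i-D_j)\oplus\ol(D_i-D_k)$, one finds $-K_X|_{E_i}=p_i^*(-K_T+2D_i-D_j-D_k)+\zeta_i$ with $\zeta_i$ the tautological class of $E_i$; evaluating on the two coordinate sections over $\gamma$ then yields the values $(-K_T+D_i-D_j)\cdot\gamma$ and $(-K_T+D_i-D_k)\cdot\gamma$, both positive by hypothesis, and the sections contained in the $M_i$ supply the remaining ordered pairs of indices. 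The main obstacle is to show that these curves, together with the vertical ones, \emph{generate} $\overline{\NE}(X)$, so that positivity on them is enough. For this I would argue that, $-K_X$ being $\sigma$-ample, the relative cone $\overline{\NE}(X/T)$ is spanned by the six $(-1)$-curve classes of the fibre; that $\overline{\NE}(T)$ is polyhedral because $T$ is Fano; and that the del Pezzo-bundle structure forces every horizontal extremal ray to be the minimal lift over an extremal ray of $T$, hence one of the sections above. Kleiman's criterion then concludes.

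Finally, to compute $\delta_X$: for the lower bound, each $E_i$ is a $\pr^1$-bundle over $T$ whose curves span only the $\rho_T$ base directions together with the single fibre class $[\ell_i]$ (the remaining fibre classes and the line class are represented only in $E_j$, $E_k$ and in general fibres, all missed by $E_i$), so $\dim\N(E_i,X)=\rho_T+1$ and $\codim\N(E_i,X)=3$; thus $\delta_X\geq3$. For the upper bound, let $D$ be any prime divisor. If $D$ dominates $T$, then for general $t$ the intersection $D\cap F$ is a nonzero effective curve in the surface $F$, so $\N(D,X)$ contains the image of $\N(T)$ and at least one fibre class, whence $\codim\N(D,X)\leq3$. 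If instead $\sigma(D)\neq T$, then $D=\sigma^{-1}(D_T)$ for a prime divisor $D_T\subset T$, and $\N(D,X)$ contains all four fibre classes, so $\codim\N(D,X)=\codim\N(D_T,T)\leq\delta_T\leq3$. Hence $\delta_X\leq3$, and therefore $\delta_X=3$.
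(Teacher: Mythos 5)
The routine parts (smoothness, $\dim X$, $\rho_X$, vertical curves) are fine, and your computation of the Lefschetz defect is correct and in fact more self-contained than the paper's: for the upper bound you argue directly that a prime divisor $D$ dominating $T$ satisfies $\codim\N(D,X)\leq 3$ (since $\sigma_*$ maps $\N(D,X)$ onto $\N(T)$ and $D\cap F$ gives a nonzero vertical class), while $D=\sigma^{-1}(D_T)$ satisfies $\codim\N(D,X)=\codim\N(D_T,T)\leq\delta_T\leq 3$; the paper instead argues by contradiction via the structure theorem for $\delta\geq 4$ (Th.~\ref{delta4}) and a product-splitting lemma. Both work.

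The genuine gap is in the proof that $X$ is Fano. You verify positivity of $-K_X$ only on vertical curves and on a finite collection of distinguished horizontal curves (coordinate sections of $E_i\to T$ and sections inside the transforms of the divisors swept out by the lines $\overline{S_jS_k}$), and you then need these classes, together with the vertical ones, to generate $\overline{\NE}(X)$. Your justification --- that ``the del Pezzo-bundle structure forces every horizontal extremal ray to be the minimal lift over an extremal ray of $T$'' --- is not a theorem, and no argument is sketched for it; there is no general principle producing, for a horizontal extremal ray of $X$, a representative curve which is a section over an extremal curve of $T$ lying in one of your distinguished divisors. You also cannot reach this through the cone theorem, since the cone theorem only controls the $K_X$-negative part of $\overline{\NE}(X)$, and showing that the $K_X$-nonnegative part is trivial is precisely what is at stake --- the argument would be circular. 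Note, finally, that even positivity of $-K_X$ on \emph{every} irreducible curve (strict nefness) would not suffice on its own: Kleiman's criterion requires positivity on the closed cone minus the origin, and strictly nef divisors need not be ample.

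The paper's proof is structured exactly to avoid this difficulty, in three steps: (1) $-K_X\cdot\Gamma>0$ for every irreducible curve $\Gamma$, proved by showing (Lemmas \ref{E} and \ref{E'}) that $(-K_X)_{|E_i}$ and $(-K_X)_{|E_i'}$ are tautological classes of projectivizations of \emph{ample} vector bundles --- this is where the hypothesis on $-K_T+D_i-D_j$ enters, and it covers all curves inside these divisors, not just sections --- together with adjunction on fibers for vertical curves, and the decomposition $-K_X\sim\sigma^*(-K_T)+\sum_{i}(E_i+E_i')$ for every remaining curve, where each summand meets $\Gamma$ nonnegatively and $\sigma^*(-K_T)$ strictly positively; (2) $-K_X$ is big, shown by writing a multiple of it as an ample plus an effective divisor; (3) strictly nef and big implies ample by the basepoint-free theorem. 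To repair your proof you would either have to prove your cone-generation claim (which is essentially as hard as the proposition itself) or switch to this strictly-nef-plus-big scheme, upgrading your positivity check from the distinguished sections to all curves.
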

 \begin{proposition}[Construction B]\label{B}
Let $T$ be a smooth Fano variety with $\delta_T\leq 3$, and $N$ a divisor on $T$ such that  $N\not\equiv 0$ and both $-K_T+N$ and $-K_T- N$ are ample. 
Set 
$$Z:=\pr_T\bigl(\ol(N)\oplus\ol\oplus\ol\bigr)\stackrel{\ph}{\la} T,$$
and let 
$S_2,S_3\subset Z$ be the sections of $\ph$ corresponding to the projections onto the summands $\ol$. Let $H$ be a tautological divisor of $Z$, and
 assume that there exists a smooth, codimension $2$  subvariety $S_1\subset Z$ which is a  complete intersection of general elements in the linear systems $|{H}|$ and $|2{H}|$.
Finally let $h\colon X\to Z$  be the blow-up of $S_1,S_2,S_3$, and $\sigma:=\ph\circ h\colon X\to T$.

Then $X$ is a smooth Fano variety with $\delta_X=3$, $\dim X=\dim T+2$, and $\rho_X=\rho_T+4$.
\end{proposition}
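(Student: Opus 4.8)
The plan is to verify in turn the four assertions: the values of $\dim X$ and $\rho_X$, the smoothness of $X$, the Fano property, and the equality $\delta_X=3$. First I would record the numerical invariants. Since $\ph\colon Z\to T$ is a $\pr^2$-bundle we have $\dim Z=\dim T+2$ and $\rho_Z=\rho_T+1$; the three centres are smooth, irreducible and of codimension $2$, so each blow-up preserves the dimension and raises the Picard number by one, giving $\dim X=\dim T+2$ and $\rho_X=\rho_T+4$. For this, and for smoothness of $X$, the key point is that $S_1,S_2,S_3$ are pairwise disjoint. The sections $S_2,S_3$ come from distinct direct summands, hence are disjoint. For $S_1$ I would use that, writing $H^0(Z,\ol(H))=H^0(T,\ol(N))\oplus H^0(T,\ol)\oplus H^0(T,\ol)$, a general member of $|H|$ has nonzero constant components along the two trivial summands and therefore meets neither $S_2$ nor $S_3$; as $S_1$ is contained in such a general member, it too avoids $S_2$ and $S_3$. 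Together with the smoothness of $S_1$ assumed in the statement, this exhibits $h$ as the blow-up of a disjoint union of smooth codimension-$2$ subvarieties, so $X$ is smooth.

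The core of the argument is the Fano property. I would begin from $-K_Z=3H+\ph^*(-K_T-N)$ and the blow-up formula $-K_X=h^*(-K_Z)-E_1-E_2-E_3$, and then apply Kleiman's criterion, testing $-K_X\cdot C>0$ on generators of $\overline{\NE}(X)$ grouped into three families. (i) The exceptional lines $e_i$ of $h$: here $-K_X\cdot e_i=-E_i\cdot e_i=1>0$ automatically. (ii) Curves in fibres of $\sigma\colon X\to T$: each fibre is $\pr^2$ blown up at the four points cut out by $S_2,S_3$ (one each) and $S_1$ (two), the normal bundle of a fibre is trivial so $-K_X$ restricts to the anticanonical class of that del Pezzo surface, and positivity reduces to the four points being in general position in every fibre, which follows from the genericity of $S_1$ and the disjointness above. (iii) Horizontal curves, mapping onto curves of $T$: these are governed by the sections of $\sigma$, and I would compute $-K_X\cdot C$ through the intersection numbers of $C$ with $H$ and the $E_i$. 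The delicate point is that the section onto the summand $\ol(N)$ a priori produces a horizontal class whose anticanonical degree involves $2N$; it is precisely the blow-up of $S_1$ (equivalently, the behaviour of $|H|$ and $|2H|$ along that section) that corrects these degrees, so that the required positivity becomes equivalent to the ampleness of $-K_T+N$ and of $-K_T-N$. Combining the three cases yields ampleness of $-K_X$.

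Finally I would determine $\delta_X$. The lower bound $\delta_X\geq 3$ is immediate: for $i=2,3$ the divisor $E_i$ is a $\pr^1$-bundle over $S_i\cong T$, so $\dim\N(E_i,X)\leq\rho_T+1$ and hence $\codim\N(E_i,X)\geq\rho_X-(\rho_T+1)=3$. For the upper bound I would argue by contradiction: if $\delta_X\geq 4$ then, by Theorem~\ref{delta4}, $X\cong S\times T'$ with $\rho_S\geq 5$; but $N\not\equiv 0$ makes the $\pr^2$-bundle $Z\to T$, and with it the del Pezzo fibration $\sigma$, nontrivial, which is incompatible with such a splitting. Hence $\delta_X=3$.

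The main obstacle is the horizontal part of the Fano computation in (iii): pinning down a generating set of $\overline{\NE}(X)$ and showing that, once the blow-up of $S_1$ is taken into account, positivity of $-K_X$ along every section of $\sigma$ is equivalent to the ampleness of $-K_T\pm N$. This is exactly the step where the numerical hypotheses enter, and the same circle of ideas—now using $N\not\equiv 0$—is what both excludes the product structure in the $\delta_X\leq 3$ step and rules out degenerations of the del Pezzo fibres over special loci of $T$.
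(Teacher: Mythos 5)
There are two genuine gaps in your proposal.

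First, the ampleness argument. Verifying $-K_X\cdot C>0$ on curves --- even on \emph{all} curves of $X$ --- only proves that $-K_X$ is strictly nef, and strict nefness does not imply ampleness; Kleiman's criterion requires positivity on the closed cone $\overline{\NE}(X)\smallsetminus\{0\}$, and you give no argument that your three families of curve classes generate this closed cone (for anticanonical divisors, ``strictly nef implies ample'' is precisely the open Campana--Peternell problem, so this step cannot be waved away). The paper closes exactly this gap by proving, besides strict nefness, that $-K_X$ is \emph{big} --- writing $-mK_X$ as an ample plus an effective divisor, using the ample divisor $A_Z=H_0+\ph^*(-K_T)$ --- and then concluding via the base point free theorem. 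Moreover, your treatment of curves in fibers is incorrect as stated: the double cover $\ph_{|S_1}\colon S_1\to T$ necessarily ramifies (its branch divisor $\Delta\sim 2N$ is nonempty, see Rem.~\ref{doublecover}), so over $t\in\Delta$ the ``four points'' are \emph{not} in general position --- two of them collide --- and the fiber $X_t$ is a singular del Pezzo surface; no genericity of $S_1$ can remove this. The paper's proof of Lemma \ref{fanoB} avoids fiberwise general position altogether: it uses the decomposition $-K_X\sim\sigma^*(-K_T+N)+\w{H}_0+2\w{D}+E_2+E_3$, handles curves inside $E_2,E_3,\w{D},\w{H}_0$ by showing that $-K_X$ restricted to each of these divisors is ample (Lemmas \ref{E}, \ref{E'}, plus a direct computation on $\w{H}_0$, the unique member of $|H|$ through $S_1$ from Rem.~\ref{rem} --- a divisor your proposal never identifies), and gets positivity on the remaining fiber curves from $h^*(D)\cdot\Gamma=D\cdot h_*(\Gamma)>0$, since $D$ cuts a line on every fiber $Z_t$.

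Second, the bound $\delta_X\leq 3$. Your contradiction argument never uses the hypothesis $\delta_T\leq 3$, and the incompatibility you invoke is false: a product structure on $X$ is perfectly compatible with $N\not\equiv 0$ and a nontrivial fibration $\sigma$, provided the splitting comes from a splitting of $T$. Concretely, take $T=S\times\pr^1$ with $S$ a del Pezzo surface of $\rho_S\geq 5$ and $N$ the pullback of a point of $\pr^1$: all hypotheses of the construction except $\delta_T\leq 3$ hold, the output is the Fano variety $X\cong S\times X'$ (with $X'$ obtained from $\pr^1$ by Construction B), and $\delta_X\geq\delta_S\geq 4$, even though $\sigma$ is not a product with a del Pezzo surface factor over the first factor of $T$. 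The correct argument, as in Lemmas \ref{deltaA} and \ref{deltaB}, is: if $\delta_X\geq 4$, then $X\cong S\times X'$ by Th.~\ref{delta4}; $\sigma$ is then a product morphism, which forces $T\cong S\times T'$, and hence $\delta_T\geq\delta_S=\rho_S-1=\delta_X\geq 4$, contradicting the \emph{hypothesis} $\delta_T\leq 3$. The parts of your proposal that do work --- the computation of $\dim X$ and $\rho_X$, the pairwise disjointness of $S_1,S_2,S_3$, and the lower bound $\delta_X\geq 3$ via $E_2,E_3$ --- match the paper, but note that for the fiberwise statements one also needs $D\cap S_1=\emptyset$ and $H_0\cap(S_2\cup S_3)=\emptyset$ (Rem.~\ref{disj}), which your argument does not provide.
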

Let us note that Constructions A and B give new, explicit ways to construct Fano varieties of any dimension.
\begin{thm}[Structure theorem]\label{main}
Every smooth Fano variety $X$ with
$\delta_X=3$ is obtained with Construction A  or B.
\end{thm}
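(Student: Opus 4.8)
The plan is to start from a prime divisor $D\subset X$ realizing the Lefschetz defect, so that $\codim\N(D,X)=3$; dually, by the projection formula the restriction $\iota^*\colon\Nu(X)\to\Nu(D)$ has kernel $\N(D,X)^{\perp}$ of dimension $3$, i.e.\ there is a $3$-dimensional space $V$ of divisor classes vanishing on $D$. This space $V$ is the engine of the argument. Using \cite{codim} together with the structural lemmas of the earlier sections on how $\delta_X$ interacts with contractions, I would first show that $V$ forces a fiber type contraction $\sigma\colon X\to T$ with $2$-dimensional general fibre, $\dim T=\dim X-2$, $\rho_T=\rho_X-4$, and with $T$ smooth; the bound $\delta_T\leq 3$ and the Fano property of $T$ should follow from the standard behaviour of $\delta$ under contractions, the case $\delta_T\geq 4$ being excluded or reduced to a product via Theorem~\ref{delta4}. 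The class of $D$ and the curves covering it then constrain $\NE(X)$: the relative Picard number $\rho_X-\rho_T=4$ should split as $3+1$, the three corresponding to pairwise disjoint prime divisors $E_1,E_2,E_3$ destined to be the exceptional divisors, and the remaining unit to the $\pr^2$-bundle direction.

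Next I would analyse a general fibre $F$ of $\sigma$. As $X$ is Fano, $F$ is a smooth del Pezzo surface, and the relative structure pins down its configuration of $(-1)$-curves: the $E_i$ meet $F$ in disjoint $(-1)$-curves, so that contracting them fibrewise exhibits $F$ as $\pr^2$ blown up at three points, i.e.\ a degree $6$ del Pezzo surface (or its monodromy-twisted variant, where two of the centres are exchanged). Concretely, I would run the relative minimal model program over $T$ and contract $E_1,E_2,E_3$ to obtain $h\colon X\to Z$ with $Z\to T$ of relative Picard number $1$ and general fibre $\pr^2$. The key point here is to upgrade this to an honest $\pr^2$-bundle structure $Z\cong\pr_T(\ma E)$ for some rank $3$ bundle $\ma E$ on $T$, and then to show that, after an appropriate twist, $\ma E$ is \emph{decomposable}, splitting either as $\ol(D_1)\oplus\ol(D_2)\oplus\ol(D_3)$ or as $\ol(N)\oplus\ol\oplus\ol$.

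The heart of the matter, and the step I expect to be the main obstacle, is the reconstruction of $Z$ and of the centres $S_i$. I must prove that each $E_i$ is the exceptional divisor of the blow-up of a smooth, irreducible, codimension $2$ subvariety $S_i=h(E_i)\subset Z$, that the $S_i$ are pairwise disjoint, and that they sit in the prescribed position: two of them, say $S_2,S_3$, are sections of $\ph$, while $S_1$ is either a third section (Construction~A) or the complete intersection of general members of $|H|$ and $|2H|$ (Construction~B). This amounts to controlling the normal bundles of the $E_i$ and checking that each $h_{|E_i}\colon E_i\to S_i$ is a $\pr^1$-bundle, and then to decomposing $\ma E$; the splitting is exactly where the numerical hypotheses of the Constructions — $-K_T+D_i-D_j$ ample, respectively $N\not\equiv 0$ with $-K_T+N$ and $-K_T-N$ ample — must be extracted from the Fano condition on $X$, by computing $-K_X$ via adjunction on the $\pr^2$-bundle and on the blow-up and testing positivity against the relevant curves. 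Finally, the dichotomy A versus B is precisely the dichotomy $\deg(\sigma_{|S_1})=1$ versus $2$: I would argue that at most one centre can have degree $>1$ over $T$, that such a centre must be $S_1$, and that degree $2$ forces the complete-intersection description of Construction~B, whereas all degrees equal to $1$ yields the three sections of Construction~A.
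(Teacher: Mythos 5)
Your outline follows the same skeleton as the paper's proof: the fibration $\sigma\colon X\to T$ from \cite[Th.~3.3]{codim}, a factorization through a $\pr^2$-bundle $Z\to T$ with $X\to Z$ the blow-up of three pairwise disjoint codimension-$2$ centres, and a dichotomy according to the degree of $S_1$ over $T$. But there is a genuine gap at exactly the point you flag as ``the main obstacle'', and it is not the reconstruction of $Z$ (which is comparatively routine, via Lemma \ref{levico} and a rearrangement of the factorization from \cite{codim}, as in Steps \ref{sectionA}--\ref{Z}): it is your claim that the dichotomy is $\deg(\ph_{|S_1})=1$ versus $2$, which you assert without any argument. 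Since a general fiber $X_t$ is the blow-up of $\pr^2$ at $d+2$ points, where $d=\deg(\ph_{|S_1})$, the del Pezzo condition only gives $d\leq 6$; nothing in your fibrewise analysis rules out, say, $d=3$. Likewise your statement that at most one centre can have degree $>1$ needs a proof (degree count alone allows $2+2+1\leq 8$); in the paper it comes from Lemma \ref{levico}, a ruled-surface argument over a general complete-intersection curve in $T$, combined with Steps \ref{Z} and \ref{d}.

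The bound $d=2$ is the technical heart of the paper (Steps \ref{H}--\ref{2}) and rests on an idea absent from your proposal: one restricts to a general curve $C\subset T$ with $N\cdot C\neq 0$ and, over the ruled surface $S=\xi^{-1}(C)$, produces --- using Kawamata--Viehweg vanishing $H^1(S,L_{|S}^{\otimes(1-d)})=0$ to extend a section from $C_1=A_{1|S}$ --- a section $K$ of $\overline{X}_2\to S$ containing $\overline{B}_1$ and disjoint from $E$. Its image in $\overline{Z}$ cuts each fiber $Z_t\cong\pr^2$, for $t\in C$ general, in a line containing all $d$ points of $S_1\cap Z_t$; since $X$ Fano forces every line $\ell\subset Z_t$ to meet $S_1\cup S_2\cup S_3$ in a scheme of length $\leq 2$ (Step \ref{points}: $-K_X\cdot\w{\ell}=3-a\geq 1$), one gets $d=2$. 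Even granting $d=2$, you would still need the vanishing arguments of Steps \ref{H0}--\ref{final} to show that $S_1$ is a complete intersection of a member of $|H|$ and a member of $|2H|$, which is what Construction B requires, rather than merely a degree-$2$ multisection. Finally, a small inaccuracy: in case B the general fiber of $\sigma$ is a degree-$5$ del Pezzo surface (the blow-up of $\pr^2$ at four points, two of which are exchanged by monodromy within the single centre $S_1$), not the degree-$6$ surface your sentence asserts.
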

When $\dim X=4$ and $\rho_X=5$, Th.~\ref{main} is proved in \cite{delta3_4folds}, and used to obtain the classification of Fano $4$-folds with $\delta_X=3$ and $\rho_X=5$. To treat the general case, we apply the same strategy as in \cite{delta3_4folds}, adapting the proof step by step. Our starting point is the existence of a fibration in del Pezzo surfaces $\sigma\colon X\to T$ after \cite[Th.~3.3]{codim}. We give an outline of the proof at the end of the Introduction.

In particular, given a Fano variety $X$ of dimension $n$ with $\delta_X=3$, by Th.~\ref{main} there exist a smooth Fano variety $T$
with $\dim T=n-2$, $\rho_T=\rho_X-4$, and $\delta_T\leq 3$, and a morphism $\sigma\colon X\to T$, such that $\sigma$ realizes $X$ as a blow-up of a $\pr^2$-bundle $Z$ over $T$, as in Construction A or B. We will denote $X$ by $X_A$ in the former case, and by $X_B$ in the latter.

Let us now describe some applications of the structure theorem.
\subsection*{Fano 4-folds} 
In dimension $4$, we complete the \emph{classification of Fano $4$-folds with Lefschetz defect $3$,} started in \cite{delta3_4folds};  note that all of them are rational, see \cite[Cor.~1.3]{M_R}.
We treat the last case left open, that is $\delta_X=3$ and $\rho_X=6$, in which we show that there are 11  families, among which 8 are toric;
we refer the reader to Section \ref{4folds}  for more details and for the description of the three non-toric families and their invariants. Note that the toric case is simpler, as toric Fano $4$-folds have been classified by Batyrev \cite{bat2,sato}. The final classification is as follows.
\begin{proposition}
  \label{classification}
  Let $X$ be a Fano $4$-fold with $\delta_X=3$. Then $5\leq \rho_X\leq 8$ and there are
  19 families for $X$, among which 14 are toric. More precisely:
  \begin{enumerate}[--]
  \item if $\rho_X=8$, then $X\cong F\times F$, $F$ the blow-up of $\pr^2$ at $3$ non-collinear points;
  \item if     $\rho_X=7$, then $X\cong F\times F'$, $F'$ the blow-up of $\pr^2$ at $2$ points;
  \item if     $\rho_X=6$, there are
    11 families for $X$, among which 8 are toric;
     \item  if     $\rho_X=5$, there are 6  families for $X$, among which 4 are toric.
    \end{enumerate}
  \end{proposition}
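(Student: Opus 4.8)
The plan is to combine the structure theorem (Th.~\ref{main}) with the hypothesis $\dim X=4$, and then organize the bookkeeping according to the Picard number of the base. First I would invoke Th.~\ref{main}: $X$ is $X_A$ or $X_B$ over a smooth Fano surface $T$ with $\rho_T=\rho_X-4$ and $\delta_T\leq 3$. A smooth Fano surface is a del Pezzo surface, and on any projective surface every prime divisor $D$ is an irreducible curve, so $\N(D,T)=\R[D]$ is $1$-dimensional and $\codim\N(D,T)=\rho_T-1$; thus $\delta_T=\rho_T-1$. Hence $\delta_T\leq 3$ forces $\rho_T\leq 4$, which already gives $5\leq\rho_X\leq 8$, and $T$ is one of the finitely many del Pezzo surfaces of Picard number at most $4$: namely $\pr^2$; $\pr^1\times\pr^1$ and $\mathbb{F}_1$; the surface $F'$ (blow-up of $\pr^2$ at two points); and $F$ (blow-up of $\pr^2$ at three non-collinear points). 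All of these are toric.

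The organizing idea is a toric/non-toric dichotomy. Since every admissible base $T$ is toric, in Construction A the summands $\ol(D_i)$ and the sections $S_1,S_2,S_3$ may be taken torus-invariant, so $X_A$ is toric; in Construction B the center $S_1$ is a general complete intersection of members of $|H|$ and $|2H|$, hence not torus-invariant, so $X_B$ is non-toric. Thus the toric families are exactly the outputs of Construction A and the non-toric families exactly those of Construction B. This lets me count the toric families directly from Batyrev's classification of toric Fano $4$-folds \cite{bat2,sato}, selecting those with $\delta_X=3$; this yields the toric totals $4,8,1,1$ for $\rho_X=5,6,7,8$, and simultaneously cross-checks the enumeration of Construction A.

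Next I would settle the large Picard numbers $\rho_X=7,8$, i.e.\ the bases $T=F'$ and $T=F$, where $\delta_T=3$. Here the ample cone of $T$ is very small, and a short intersection computation against the finitely many $(-1)$-curves shows that $-K_T+A$ and $-K_T-A$ can be simultaneously ample only for $A\equiv 0$. Consequently, in Construction A all $D_i$ are numerically equal (so, after twisting, zero), giving $Z=\pr^2\times T$ and $X_A=F\times T$ --- that is, $F\times F$ for $\rho_X=8$ and $F\times F'$ for $\rho_X=7$ --- while Construction B admits no $N\not\equiv 0$ and produces nothing.

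Finally, the case $\rho_X=6$ (bases $\pr^1\times\pr^1$ and $\mathbb{F}_1$) is the substantive one, the case $\rho_X=5$ being quoted from \cite{delta3_4folds}. The $8$ toric families are supplied by Batyrev as above, so the remaining task is to list the non-toric families, i.e.\ the outputs of Construction B. The conditions that $-K_T+N$ and $-K_T-N$ be ample with $N\not\equiv 0$ confine $N$ to a small bounded set of classes; quotienting by bundle twists, by $\Aut(T)$, and by the duality $N\leftrightarrow -N$, and then identifying those $X$ that carry del Pezzo fibrations over both $\pr^1\times\pr^1$ and $\mathbb{F}_1$, leaves exactly the three non-toric families of Prop.~\ref{newfamilies}. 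I expect this enumeration to be the main obstacle: completeness is delicate precisely here, since a mixed-sign class $N$ is easy to overlook --- indeed one admissible configuration was missing from the original argument and was restored only in the corrigendum to Prop.~\ref{newfamilies}. Assembling the four cases gives $6+11+1+1=19$ families, of which $4+8+1+1=14$ are toric.
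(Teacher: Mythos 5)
Your architecture coincides with the paper's: Th.~\ref{main} plus the observation $\delta_T=\rho_T-1$ for surfaces gives $5\leq\rho_X\leq 8$ and a toric del Pezzo base; the toric/non-toric dichotomy is the paper's Rem.~\ref{toric}; your direct intersection computation for $\rho_X=7,8$ (the $(-1)$-curve classes span $\N(T)$, forcing $D_i-D_j\equiv 0$, resp.\ ruling out $N$) is precisely the content and proof of Lemma \ref{conditions}; and the cases $\rho_X=5$ (via \cite{delta3_4folds}) and the toric count (via \cite{bat2,sato}) are handled as in the paper. However, there is a genuine gap exactly where you predict the difficulty: the enumeration of non-toric families at $\rho_X=6$. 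Your sieve --- $-K_T\pm N$ ample, $N\not\equiv 0$, modulo bundle twists, $\Aut(T)$, and ``the duality $N\leftrightarrow -N$'' --- is not the right one and does not yield three classes. There is no duality $N\leftrightarrow -N$ in Construction B: $\pr_T(\ol(-N)\oplus\ol\oplus\ol)\cong\pr_T(\ol\oplus\ol(N)\oplus\ol(N))$ is a genuinely different $\pr^2$-bundle, and Construction B is in fact never performable with $-N$ when it is with $N$. The constraint you are missing is effectivity: by Rem.~\ref{doublecover}, if a smooth $S_1$ as in Construction B exists, then $\ph_{|S_1}\colon S_1\to T$ is a double cover branched along $\Delta\sim 2N$, so $h^0(T,2N)>0$ is a \emph{necessary} condition. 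This is what eliminates, e.g., $N\in|\ol_{\pr^1\times\pr^1}(1,-1)|$, which passes all of your tests ($-K_T+N\sim\ol(3,1)$ and $-K_T-N\sim\ol(1,3)$ are ample) and is not identified with any of the three valid classes by any of your operations. Concretely, your sieve leaves four candidate classes --- $(0,1)$, $(1,1)$, $(1,-1)$ on $\pr^1\times\pr^1$, and the line class on $\mathbb{F}_1$ --- so your claimed output of three families is not what your procedure produces; the spurious class can only be killed by the double-cover argument above. Symmetrically, for the surviving $N$ you must also prove the general complete intersection in $|H|$ and $|2H|$ is smooth (the paper: $N$ nef $\Rightarrow H$ nef, and nef divisors on the toric $Z$ are globally generated, so Bertini applies); otherwise your list is only a list of candidates, not of families.

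A smaller but real flaw: your argument that Construction B outputs are non-toric (``$S_1$ is a general complete intersection, hence not torus-invariant, so $X_B$ is non-toric'') is not valid as stated, since blowing up a non-invariant center does not preclude the blow-up from carrying some other toric structure. The correct argument, as in Rem.~\ref{toric}, goes through the fibration $\sigma$: for $X_B$ the general fiber of $\sigma$ is a del Pezzo surface of degree $5$, which is not toric, whereas a contraction of a toric variety is a toric morphism with toric general fiber; hence a toric $X$ must be $X_A$ over a toric $T$. Since your toric counts (4, 8, 1, 1) are calibrated against Batyrev's lists through this dichotomy, the dichotomy itself needs the correct justification.
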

We also apply our results to the study of conic bundles $\eta\colon X\to Y$ where $X$ is a Fano $4$-fold and $\rho_X-\rho_Y\geq 3$, see Cor.~\ref{cor:conic_bundles}.
\subsection*{The case $\rho_X=5$}
The second case that we consider in  detail is that of Fano varieties  with Lefschetz defect $3$ and Picard number $5$, the minimal one, in Section \ref{last}. After Th.~\ref{main} there are a smooth Fano variety $T$ with $\rho_T=1$ and a morphism $\sigma\colon X\to T$ that realizes 
$X$ as a blow-up of a $\pr^2$-bundle over $T$ as in Construction A or B. 
We show that in this case $T$ and $\sigma$ are uniquely determined, and we  list explicitly the different $X$ that are obtained from a given $T$. To state the result,
let us fix some notation for the case  $\rho_T=1$: $\ol_T(1)$ is the ample generator
of $\Pic(T) \cong\Z$, and $\ol_T(a):=\ol_T(1)^{\otimes a}$ for every $a\in\Z$. Moreover $\ol_T(-K_T) \cong \ol_T( i_T )$, where $i_T$ is the index of $T$.
\begin{proposition}\label{rho5}
  Let $X$ be a smooth Fano variety with $\delta_X=3$ and $\rho_X=5$. Then $T$ and the morphism $\sigma\colon X\to T$ as in Construction A or B are uniquely determined; we have $X=X_A$ if $\sigma$ is smooth, and $X=X_B$ if $\sigma$ is not smooth.
  
\smallskip
  
Let $X=X_A$. Then there are uniquely determined integers $a,b$ with 
$$
b\leq 0,\quad |b| \leq \frac{i_T-1}{2},\quad\text{and}\quad |b| \leq a \leq i_T-1-|b|,
$$
such that $X$ is obtained with Construction A from $T$ with
$Z=\pr_T(\ol(a)\oplus\ol\oplus\ol(b))$.

\smallskip

Let $X=X_B$. Then there is a 
 uniquely determined integer $a$ with 
$1\leq a \leq i_T-1$
such that $X$ is obtained with Construction B from $T$ with
$Z=\pr_T(\ol(a)\oplus\ol\oplus\ol)$.
\end{proposition}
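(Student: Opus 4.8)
The plan is to read off existence from the structure theorem and then convert the line-bundle data of the Constructions into the stated numerical normal form, isolating the intrinsic characterization of the fibration as the essential point. First I would invoke Th.~\ref{main}: $X$ arises from Construction A or B, with $\dim T=\dim X-2$ and $\rho_T=\rho_X-4=1$. Thus $\Pic(T)=\Z\,\ol_T(1)$, every class on $T$ is an integer multiple of the generator, and $\ol_T(c)$ is ample precisely when $c\geq 1$. Once $\sigma$ is known to be intrinsic (see below), the dichotomy $X=X_A\Leftrightarrow\sigma$ smooth follows from the geometry of the two Constructions: in Construction A the disjoint sections $S_1,S_2,S_3$ meet every fibre of $\ph$ in three distinct points, so each fibre of $\sigma=\ph\circ h$ is $\pr^2$ blown up at three points and $\sigma$ is a smooth fibre bundle; in Construction B the map $\ph_{|S_1}\colon S_1\to T$ has degree $2$, so over its branch locus the fibre of $\sigma$ degenerates and $\sigma$ is not smooth.

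Next I would normalize the bundles. Since $\pr_T(E)\cong\pr_T(E\otimes\ol_T(c))$, in Construction A I may twist so that one summand is trivial, writing $Z=\pr_T(\ol_T(a)\oplus\ol_T\oplus\ol_T(b))$; in Construction B the summands carrying $S_2,S_3$ are already trivial, so $Z=\pr_T(\ol_T(a)\oplus\ol_T\oplus\ol_T)$ with $a\neq 0$. The ampleness hypotheses then translate into inequalities: in case A the ample classes $-K_T+D_i-D_j$ are $\ol_T(i_T+d)$ with $d$ ranging over the pairwise differences of $\{a,0,b\}$, forcing $|a|,|b|,|a-b|\leq i_T-1$; in case B the conditions on $-K_T\pm N$ give $1\leq|a|\leq i_T-1$.

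The subtler point is to cut these solution sets down to the stated fundamental domains, thereby obtaining uniqueness of $(a,b)$, resp.\ of $a$. The three sections in Construction A are interchangeable, so the unordered triple $\{a,0,b\}$ modulo simultaneous shift is the true invariant; sorting and shifting the middle value to $0$ gives a representative with $b\leq 0\leq a$. To obtain the extra constraint $|b|\leq a$ I would exhibit a sign symmetry $\{d_i\}\mapsto\{-d_i\}$: in each $\pr^2$-fibre the three blown-up points are in general position, so the del Pezzo fibre of degree $6$ carries a second $\pr^2$-structure given by the standard quadratic (Cremona) transformation, and I would check that this relative Cremona map realizes $X$ as the Construction-A output of the dual bundle $\ol_T(-a)\oplus\ol_T\oplus\ol_T(-b)$. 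Combined with the permutation, this identifies the invariant with the unordered triple up to shift and global sign, whose canonical representative is exactly $b\leq 0$, $|b|\leq a\leq i_T-1-|b|$; the bound $|b|\leq(i_T-1)/2$ is then forced for the range of $a$ to be nonempty. In case B the same sign symmetry lets me assume $a>0$, giving $1\leq a\leq i_T-1$.

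The main obstacle will be the uniqueness of $T$ and of the fibration $\sigma$ itself, on which everything above is predicated. Here I would exploit that $\rho_X=5$ is minimal: the three exceptional divisors of $h$, the pull-back of a tautological divisor of $Z$, and $\sigma^*\ol_T(1)$ form a basis of $\Nu(X)$, and I would enumerate the extremal rays of $\NE(X)$ directly from the explicit geometry of the Construction. The fibration $\sigma$ is the contraction of the codimension-one face of $\NE(X)$ spanned by the classes of curves contained in its fibres; I would characterize this face intrinsically—for instance as the unique face whose associated contraction has Fano target of Picard number $1$ and del Pezzo surfaces as fibres—and verify that no other face of $\NE(X)$ shares this property. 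Since $T$ is then recovered as the target of this canonical contraction, both $T$ and $\sigma$ are uniquely determined, which completes the argument.
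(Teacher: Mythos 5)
There is a genuine gap, and it sits exactly at the point you yourself flag as the main obstacle: the uniqueness of $T$ and $\sigma$. Your plan is to enumerate all extremal rays of $\NE(X)$ and then characterize $\NE(\sigma)$ intrinsically as the unique codimension-one face whose contraction has del Pezzo fibers and Picard-number-one Fano target. No argument is offered for either step, and both are substantially harder than what the proposition needs: the horizontal extremal rays of $\NE(X)$ depend on $T$, on the twisting data $(a,b)$ and on $\dim X$ (their determination in special cases is essentially the content of the paper's Th.~\ref{n-1,1}), and a full enumeration for arbitrary $T$ with $\rho_T=1$ in arbitrary dimension is not available. The paper's proof avoids knowing $\NE(X)$ altogether: given a hypothetical second fibration $\bar\sigma\colon X\to\overline{T}$, it takes any extremal ray $R$ of $\NE(\sigma)$, uses that its exceptional divisor is $E\cong\pr_S(\ol\oplus L)$ with $S\cong T$ or $S\cong S_1$, invokes Cornalba's theorem to get $\rho_{S_1}=1$ when $\dim X\geq 5$ (so $\rho_E=2$ and $E$ has at most two elementary contractions), and then analyzes the Stein factorization of $\bar\sigma_{|E}$ by a dimension count to conclude $R\subset\NE(\bar\sigma)$; equality of the two $4$-dimensional faces then forces $\sigma=\bar\sigma$ up to isomorphism of targets. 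The low-dimensional cases $n=3,4$ (where $\rho_{S_1}=1$ fails) require separate ad hoc arguments, which your sketch also does not address.

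Two further points. In Construction B your mechanism for forcing $a\geq 1$ is wrong: there is no sign symmetry $N\mapsto -N$, since twisting $\ol(N)\oplus\ol\oplus\ol$ to put $-N$ on one summand destroys the required shape of the data (two trivial summands carrying $S_2,S_3$, and $S_1$ a complete intersection in $|H|$ and $|2H|$). The correct reason is Rem.~\ref{doublecover}: $2N$ is linearly equivalent to the (nonzero, effective) branch divisor of $S_1\to T$, and $\rho_T=1$ then forces $N$ ample; moreover, by Prop.~\ref{X_B} and Rem.~\ref{quadricbundle} all three factorizations of $\sigma$ as in Construction B go through the \emph{same} $\pr_T(\ol(N)\oplus\ol\oplus\ol)$, so $a$ is a genuine invariant. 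Finally, in Construction A your relative Cremona map is exactly the paper's Lemma \ref{X_A}, but it only shows that permutation, shift and sign \emph{are} equivalences; uniqueness of $(a,b)$ also needs the converse, that these are the \emph{only} equivalences. This requires knowing that $\sigma$ has exactly two factorizations as in Construction A, which the paper extracts from the cone $\NE(\sigma)\cong\NE(X_t)$ (Rem.~\ref{NE A}): a degree-six del Pezzo cone has precisely two simplicial facets giving blow-downs to $\pr^2$. Your proposal asserts the resulting classification of the invariant rather than proving it.
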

\subsection*{Fano varieties containing a divisor with Picard number 2}
We note that the assumptions $\rho_X=5$ and $\delta_X=3$ imply that $X$ contains a prime divisor $D$ with $\dim\N(D,X)=2$; in fact it is easy to see that all the varieties as in Prop.~\ref{rho5} also contain a prime divisor $D'$ with $\rho_{D'}=2$. We obtain the following application to Fano varieties containing a prime divisor with $\rho=2$; an analogous result for the case of a prime divisor with $\rho=1$ is given in \cite[Th.~3.8]{minimal}.
\begin{corollary}\label{rho2}
Let $X$ be a smooth Fano variety containing a prime divisor $D$ with $\rho_D=2$, or more generally with $\dim\N(D,X)=2$. Then either $X\cong S\times T$ where $S$ is a del Pezzo surface and $\rho_T=1$, or $\rho_X\leq 5$. Moreover, $\rho_X=5$ if and only if $X$ is as in Prop.~\ref{rho5}.
\end{corollary}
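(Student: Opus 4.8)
The whole statement should follow by feeding a single numerical inequality, produced by the hypothesis, into Theorem~\ref{delta4} and Proposition~\ref{rho5}. First I would translate the hypothesis into a bound on the Lefschetz defect. Since $\dim\N(D,X)=2$, we have $\codim\N(D,X)=\rho_X-2$, and therefore $\delta_X\geq\rho_X-2$ by the very definition of $\delta_X$. (If one only assumes $\rho_D=2$, then $\dim\N(D,X)\leq 2$; in the degenerate case $\dim\N(D,X)=1$ one gets the stronger bound $\delta_X\geq\rho_X-1$, and the conclusion below holds a fortiori, so there is no loss in working with $\dim\N(D,X)=2$.) I would also record that $\dim X\geq 3$: from $\dim\N(D)\geq\dim\N(D,X)=2$ and the fact that an irreducible curve $C$ satisfies $\dim\N(C)=1$, the divisor $D$ must have dimension at least $2$.

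Next comes the dichotomy on $\delta_X$. If $\delta_X\geq 4$, Theorem~\ref{delta4} gives $X\cong S\times T$ with $S$ a del Pezzo surface and $\rho_S=\delta_X+1$; since $X$ is Fano, so are both factors, hence $\rho_X=\rho_S+\rho_T=\delta_X+1+\rho_T$, and substituting into $\delta_X\geq\rho_X-2$ yields $\rho_T\leq 1$. As $\dim T=\dim X-2\geq 1$ by the previous step, $T$ is positive-dimensional, hence $\rho_T\geq 1$ and in fact $\rho_T=1$, which is the first alternative. If instead $\delta_X\leq 3$, then $\rho_X\leq\delta_X+2\leq 5$, the second alternative.

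Finally I would prove the equivalence. In the product case one always has $\rho_X=\delta_X+2\geq 6$, so $\rho_X=5$ is incompatible with $\delta_X\geq 4$; combined with $\delta_X\geq\rho_X-2=3$ this forces $\delta_X=3$, and then Proposition~\ref{rho5} applies directly and describes $X$. The reverse implication is immediate, since every $X$ as in Proposition~\ref{rho5} has $\rho_X=5$ by construction. There is no genuinely hard step here—everything is assembled from Theorem~\ref{delta4} and Proposition~\ref{rho5}—and the only points requiring care are the boundary bookkeeping: ruling out $\dim T=0$ so that $\rho_T=1$ (and not $0$) in the product case, checking that the two alternatives are exhaustive, and verifying that the $\rho_X=5$ equivalence is tight in both directions.
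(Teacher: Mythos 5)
Your proof is correct, but it replaces both substantive sub-arguments of the paper's proof with different ones. To get $\rho_T=1$ in the case $\delta_X\geq 4$, the paper argues geometrically: any prime divisor of the form $S\times D_T$ satisfies $\dim\N(S\times D_T,X)\geq\rho_S\geq 5$, so the hypothesis forces $D$ to dominate $T$; since $\pi_T$ is not finite on $D$, the kernel of $(\pi_T)_*$ meets $\N(D,X)$ non-trivially, and pushing forward gives $\rho_T\leq\dim\N(D,X)-1=1$. You obtain the same bound by pure bookkeeping, combining $\rho_X=\rho_S+\rho_T=\delta_X+1+\rho_T$ (valid since both factors of a Fano product are Fano) with $\delta_X\geq\rho_X-2$, and you rule out $\rho_T=0$ via $\dim T=\dim X-2\geq 1$, a point the paper leaves implicit in this step. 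Likewise, to force $\delta_X=3$ when $\rho_X=5$, the paper excludes $\delta_X=\rho_X-1$ by invoking \cite[Lemma~3.1]{minimal} (a prime divisor $D'$ with $\dim\N(D',X)=1$ forces $\rho_X\leq 3$), whereas you exclude $\delta_X\geq 4$ by feeding it back into your product-case computation, which yields $\rho_X=\delta_X+2\geq 6$. Both substitutions are sound, and your route is more self-contained: everything reduces to Th.~\ref{delta4}, additivity of $\rho$ on Fano products, and Prop.~\ref{rho5}, with no appeal to \cite{minimal}; what the paper's geometric argument buys in exchange is extra information about how $D$ sits inside $S\times T$, and its citation of \cite[Lemma~3.1]{minimal} works independently of the product-case analysis. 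A small additional merit of your write-up is that you address the degenerate reading of the hypothesis where $\rho_D=2$ but $\dim\N(D,X)=1$ (pushforward can drop dimension, so the statement's ``more generally'' is not literally an implication), checking that the stronger bound $\delta_X\geq\rho_X-1$ makes every conclusion hold a fortiori, while the paper's proof tacitly works with $\dim\N(D,X)=2$ throughout.
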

\subsection*{Elementary divisorial contractions}
Corollary \ref{rho2} is also related to the study of \emph{Fano varieties having an elementary divisorial contraction 
$\tau\colon X\to X'$ where $\tau(\Exc (\tau))$ is a curve,} because then
 automatically  $\dim\N(\Exc(\tau),X)=2$. It follows from Th.~\ref{delta4}  that $\rho_X\leq 5$, and Tsukioka \cite{toru5} has classified the case $\rho_X=5$ when $\tau$ is the blow-up of a smooth curve in a smooth variety.
We generalize this classification to an arbitrary elementary divisorial contraction $\tau$ such that $\dim\tau(\Exc(\tau))=1$, as follows.
\begin{thm}\label{n-1,1}
  Let $X$ be a smooth Fano variety of dimension $n\geq 4$ with $\rho_X=5$. Then the following are equivalent:
  \begin{enumerate}[$(i)$]
  \item there is an elementary divisorial contraction $\tau\colon X\to X'$ such that $\tau(\Exc(\tau))$ is a curve;
  \item  $X$ is obtained with Construction A or B from a smooth Fano variety $T$ such that $\dim T=n-2$, $\rho_T=1$,  $i_T>1$, with $Z=\pr_T(\ol(a)\oplus\ol\oplus\ol)$ and $a\in\{1,\dotsc,i_T-1\}$.
\end{enumerate}
   If these conditions hold we have $\Exc(\tau)\cong \pr^1\times T$, $\ma{N}_{\Exc(\tau)/X}\cong \pi_{\pr_1}^*\ol(-1)\otimes\pi_T^*\ol(-a)$, and $\tau(\Exc(\tau))\cong\pr^1$.
 \end{thm}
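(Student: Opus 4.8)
The plan is to prove the equivalence $(i)\Leftrightarrow(ii)$ and to obtain the final assertions on $\Exc(\tau)$ along the way, starting with the constructive direction $(ii)\Rightarrow(i)$. Assume $X$ is as in $(ii)$, so $Z=\pr_T(\ol(a)\oplus\ol\oplus\ol)$ with $1\le a\le i_T-1$. The geometric heart is the divisor $W:=\pr_T(\ol\oplus\ol)\subset Z$, the $\pr^1$-subbundle attached to the quotient $\ol(a)\oplus\ol\oplus\ol\twoheadrightarrow\ol\oplus\ol$; since the rank-$2$ summand is trivial, $W\cong\pr^1\times T$, it contains the two sections $S_2,S_3$ as $\{0\}\times T$ and $\{\infty\}\times T$, and it is disjoint from $S_1$. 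In Construction A this is clear because $S_1=\pr_T(\ol(a))$ is the complementary section; in Construction B one checks that for a general complete intersection $S_1\in|H|\cap|2H|$ the set $S_1\cap W$ is defined on $W\cong\pr^1\times T$ by the vanishing of a section of $\ol(1)$ and of a section of $\ol(2)$ on the $\pr^1$-factor, whose zero loci (one point, respectively two points) are disjoint for a general choice, so $S_1\cap W=\emptyset$.

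Since $S_2,S_3$ are Cartier divisors of $W$ and $S_1\cap W=\emptyset$, the strict transform $\w W\subset X$ of $W$ under $h$ is again isomorphic to $\pr^1\times T$. Using $W\equiv H-a\ph^*\ol_T(1)$ one gets $\ma{N}_{W/Z}\cong\pi_{\pr_1}^*\ol(1)\otimes\pi_T^*\ol(-a)$, and the standard strict-transform formula $\ma{N}_{\w W/X}\cong\ma{N}_{W/Z}\otimes\ol_W(-S_2-S_3)$, together with $S_2,S_3\equiv\pi_{\pr_1}^*\ol(1)$ in $W$, yields exactly $\ma{N}_{\w W/X}\cong\pi_{\pr_1}^*\ol(-1)\otimes\pi_T^*\ol(-a)$. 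In particular the restriction of $\ma{N}_{\w W/X}$ to each fibre $\{p\}\times T$ is $\ol_T(-a)$, which is anti-ample precisely because $a\ge1$. I would then show, by a direct intersection computation combined with the cone theorem for the Fano variety $X$, that the class of a minimal rational curve of $T$ lying in a fibre $\{p\}\times T\subset\w W$ spans a $K_X$-negative extremal ray $R$ with $\Lo(R)=\w W$; its contraction $\tau:=\operatorname{cont}_R$ is then elementary and divisorial, collapses the $T$-fibres of $\w W\cong\pr^1\times T$, and has image $\tau(\w W)\cong\pr^1$. This establishes $(ii)\Rightarrow(i)$ and the supplementary statement simultaneously.

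For $(i)\Rightarrow(ii)$, write $E:=\Exc(\tau)$. That $\tau$ is elementary with $\tau(E)$ a curve forces $\dim\N(E,X)=2$ (the classes of the contracted fibres and of a curve dominating $\tau(E)$ span $\N(E,X)$), so $\codim\N(E,X)=\rho_X-2=3$ and $\delta_X\ge3$. One cannot have $\delta_X\ge4$: by Th.~\ref{delta4} this would give $X\cong S\times T$ with $\rho_S=\delta_X+1$, whence $\rho_T=\rho_X-\rho_S=4-\delta_X\le0$, i.e.\ $T$ a point and $\dim X=2$, contradicting $n\ge4$. Hence $\delta_X=3$, and Th.~\ref{main} and Prop.~\ref{rho5} apply: $X$ is obtained with Construction A or B from a uniquely determined $T$ with $\rho_T=\rho_X-4=1$, with $Z=\pr_T(\ol(a)\oplus\ol\oplus\ol(b))$ in case A and $Z=\pr_T(\ol(a)\oplus\ol\oplus\ol)$ in case B.

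It remains to single out the families admitting such a $\tau$, which I expect to be the main obstacle. Composing with the del Pezzo fibration $\sigma\colon X\to T$, I would first prove $\sigma_*R\ne0$: otherwise the curves of $R$ lie in the $2$-dimensional fibres of $\sigma$, making $\tau(E)$ finite over $T$ and hence of dimension $n-2\ge2$, not a curve. Since $\rho_T=1$, it follows that each $(n-2)$-dimensional contracted fibre of $E\to\tau(E)$ is finite over $T$, so $E$ must be the strict transform of a $\pr^1$-subbundle of $Z$ isomorphic to $\pr^1\times T$; such a trivial subbundle exists only when two summands of the defining bundle coincide, which, under the normalization $b\le0\le a$, $|b|\le a$ of Prop.~\ref{rho5}, means $b=0$ in Construction A and is automatic in Construction B. In both cases $Z=\pr_T(\ol(a)\oplus\ol\oplus\ol)$, and the normal-bundle computation above identifies $\ma{N}_{E/X}$ along the contracted $T$-fibres with $\ol_T(-a)$, which must be anti-ample for $\tau$ to collapse them; this forces $a\ge1$, hence $i_T>1$, giving $(ii)$. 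The delicate points are exactly the exclusions of the product case $a=0$ (where $X\cong S\times T$ with $S$ the blow-up of $\pr^2$ at three points admits no elementary divisorial contraction onto a curve) and of the unbalanced cases $b<0$ (where no trivial $\pr^1$-subbundle exists), both of which rest on the analysis of the extremal rays of $X_A$ and $X_B$ carried out for Prop.~\ref{rho5}.
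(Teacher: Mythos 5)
Your overall strategy does coincide with the paper's: your $\w W$ is the paper's $E_1'$ (resp.\ $\w D$), the normal-bundle computation $\ma{N}_{\w W/X}\cong \pi_{\pr_1}^*\ol(-1)\otimes\pi_T^*\ol(-a)$ is the same, and the reduction of $(i)$ to Prop.~\ref{rho5} via $\delta_X=3$ is the paper's route. But in both directions the proposal stops short exactly at the step that carries the mathematical weight. In $(ii)\Rightarrow(i)$ you assert that the class of a curve $\{pt\}\times\Gamma_0\subset\w W$ spans an extremal ray of $\NE(X)$ ``by a direct intersection computation combined with the cone theorem''. The cone theorem gives no criterion for a prescribed class to be extremal; one needs a supporting nef divisor, and producing it is the substantive computation of this direction. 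The paper shows that $H:=a(-K_X)+(i_T-a)E_1'$ works when $2a>i_T$, but that for $2a\le i_T$ this divisor is negative on the curve $e_1'=\pr^1\times\{pt\}\subset E_1'$, so a correction term is needed, $H':=a(-K_X)+(i_T-a)E_1'+(i_T-2a+1)E_2$, whose nefness in turn uses $a\ge 1$. Anti-ampleness of $\ma{N}_{\w W/X}$ on the $T$-fibres is not by itself enough: it shows curves in those fibres cannot move out of $\w W$, not that their class generates a one-dimensional face of $\NE(X)$, which is what elementarity of $\tau$ requires.

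In $(i)\Rightarrow(ii)$ the decisive step is likewise asserted rather than proved: from ``each contracted fibre of $\Exc(\tau)\to\tau(\Exc(\tau))$ is finite over $T$'' you jump to ``$\Exc(\tau)$ must be the strict transform of a $\pr^1$-subbundle of $Z$ isomorphic to $\pr^1\times T$''. A priori $\Exc(\tau)$ is just a prime divisor admitting a finite surjective map to $\tau(\Exc(\tau))\times T$; nothing in your argument relates it to the divisors attached to $\sigma$. The paper closes this gap in two steps: first, for $\Gamma$ a curve contracted by $\tau$, some exceptional divisor $E$ of an extremal ray of $\NE(\sigma)$ satisfies $E\cdot\Gamma\neq 0$ (otherwise all these divisor classes lie in the hyperplane $\Gamma^\perp\subset\Nu(X)$, which forces the already-excluded case $a=b=0$); second, this $E$ equals $\Exc(\tau)$, via the Stein factorization of $\tau_{|E}$ together with the classification of the elementary contractions of $E$ — which rests on $\rho_{S_1}=1$ (Cornalba) and hence on $n\ge 5$; this is precisely why the paper treats $n=4$ separately by quoting \cite{delta3_4folds}, a case your proposal never addresses. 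Moreover, in case B one must still rule out that $\Exc(\tau)$ is one of the $\pr^1$-bundles $E_1,G_2,G_3$ over the double cover $S_1$, which the paper does by observing $H_{|S_1}\not\sim 0$. Finally, your argument for $\sigma_*R\neq 0$ is garbled: if $\sigma_*R=0$ then $\sigma$ factors through $\tau$, so $\sigma(\Exc(\tau))$ is the image of the curve $\tau(\Exc(\tau))$ and has dimension $\le 1$; the contradiction comes from $\Exc(\tau)$ dominating $T$ (which itself needs justification from $\dim\N(\Exc(\tau),X)=2$), not from $\tau(\Exc(\tau))$ being ``finite over $T$ and hence of dimension $n-2$''.
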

 When $\dim X=4$, Cor.~\ref{rho2} and Th.~\ref{n-1,1} are already proved in \cite{delta3_4folds}.
\subsection*{Strategy of the proof of Theorem \ref{main}}
Our starting point is the existence, from \cite[Th.~3.3]{codim}, of a flat fibration $\sigma\colon X\to T$, where $T$ is a smooth Fano variety with $\dim T=\dim X-2$ and $\rho_T=\rho_X-4$; moreover $\sigma$ factors as $X\to Y\to T$, where the first map is a conic bundle, and the second one is smooth with fiber $\pr^1$. We collect the properties of $\sigma$ in Th.~\ref{factorization}.

We show that the fibration $\sigma\colon X\to T$ has a different factorization as $X\to Z\stackrel{\ph}{\to} T$, where $\ph$ is a $\pr^2$-bundle, and $X\to Z$ is the blow-up of three pairwise disjoint smooth, irreducible subvarieties $S_i\subset Z$ of codimension $2$, horizontal for $\ph$. 

 Then we prove that  $S_2$ and $S_3$ are always sections of $\ph$.
When $S_1$ is a section as well, we show that $\sigma\colon X\to T$ is as in Construction A.

Otherwise, we study the degree $d>1$ of  $S_1$ over $T$. Fiberwise, for $t\in T$ general, we have 
$Z_t:=\ph^{-1}(t)\cong\pr^{2}$, $X_t:=\sigma^{-1}(t)$ a smooth del Pezzo surface, and $X_t\to Z_t$ the blow-up of the $d+2$ points $(S_1\cup S_2\cup S_3)\cap Z_t$.
This is the hardest part of the proof; we restrict to a general curve $C\subset T$, and construct  in $\overline{Z}:=\ph^{-1}(C)$ a divisor $D$ which is a $\pr^1$-bundle over $C$ and contains $S_1\cap \overline{Z}$. Therefore, 
for $t\in C$ general,
the $d$ points $S_1\cap Z_t$ are aligned; since $X_t$ is del Pezzo, this implies that $d=2$. Finally we show that  $\sigma\colon X\to T$ is as in Construction B.
\subsection*{Structure of the paper} In Section \ref{notation} we fix the notation and prove some preliminary results. In Sections \ref{first} and \ref{second} we present Constructions A and B respectively, and prove Propositions \ref{A} and \ref{B}, while in Section \ref{proof} we prove
Th.~\ref{main}.

In the second part of the paper we consider the applications of the structure theorem. In Section \ref{sec6} we study the del Pezzo fibration $\sigma\colon X\to T$; we describe its fibers, the relative cone $\NE(\sigma)$, the relative contractions, and the different factorizations of $\sigma$. We also give some conditions on $T$ in order to perform Constructions A and B to obtain 
 Fano varieties $X$  different from the product $F\times T$, $F$ the blow-up of $\pr^2$ at three non-collinear points. Finally in Section \ref{4folds} we give the applications to Fano $4$-folds, and in Section \ref{last} the applications to Fano varieties with $\rho_X=5$.
\section{Notation and preliminaries}\label{notation}
\noindent We work over the field of complex numbers. Let $X$ be a smooth projective variety of arbitrary dimension. 

 $\mathcal{N}_{1}(X)$ (respectively, $\mathcal{N}^{1}(X)$) is the real vector space of one-cycles (respectively, Cartier divisors) with real coefficients, modulo numerical equivalence, and
  $\dim \mathcal{N}_{1}(X)=\dim \mathcal{N}^{1}(X)=\rho_{X}$ is the Picard number of $X$.

  Let $C$ be a one-cycle of $X$, and $D$ a divisor of $X$. We denote by $[C]$ (respectively, $[D]$) the numerical equivalence class in $\mathcal{N}_{1}(X)$ (respectively, $\mathcal{N}^{1}(X)$). We also denote by $C^{\perp}\subset\Nu(X)$ (respectively $D^{\perp}\subset\N(X)$) the orthogonal hyperplanes.
 
  The symbol $\equiv$ stands for numerical equivalence (for both one-cycles and divisors), and $\sim$ stands for linear equivalence of divisors.

  $\operatorname{NE}(X)\subset \mathcal{N}_{1}(X)$ is the convex cone generated by classes of effective curves. An \emph{extremal ray} $R$ is a 
one-dimensional face of $\NE(X)$. When $X$ is Fano, the \emph{length} of $R$ is $\ell(R)=\min\{-K_X\cdot C\,|\,C\text{ a rational curve in }X\}$.

A facet of a convex polyhedral cone $\ma{C}$ is a face of codimension one; moreover $\ma{C}$ is simplicial when it can be generated by linearly independent elements. We denote by $\langle S\rangle$ the convex cone generated by a subset $S\subset \N(X)$. 

 A \textit{contraction} of $X$ is a surjective morphism $\varphi\colon X\to Y$ with connected fibers, where $Y$ is normal and projective.
 The \textit{relative cone} $\text{NE}(\varphi)$ of $\varphi$ is the convex subcone of $\text{NE}(X)$ generated by classes of curves contracted by $\varphi$. A contraction is \emph{elementary} if $\rho_X-\rho_Y=1$.

 A \textit{conic bundle} $X\to Y$ is a  contraction of fiber type where every fiber is one-dimensional and $-K_X$ is relatively ample.

\medskip

We gather here some preliminary results that we need in the sequel.
\begin{remark}\label{relative}
  Let $T$ be a smooth projective variety and $D_1,\dotsc,D_r$ divisors on $T$. Set $Z:=\pr_T(\ol(D_1)\oplus\cdots\oplus\ol(D_r))\stackrel{\ph}{\to} T$ and
  let  $H$ be a tautological divisor. 
  For every $i=1,\dotsc,r$ set
  $$F_i:=\pr_T\bigl(\oplus_{j\neq i}\ol(D_j)\bigr)\hookrightarrow Z$$
  with the embedding given by the projection $\oplus_{j}\ol_T(D_j)\twoheadrightarrow\oplus_{j\neq i}\ol_T(D_j)$. Then:
  $$F_i\sim H-\ph^* (D_i)\ \text{ for every }i=1,\dotsc,r,\quad \text{and }\ 
  K_Z-\ph^*K_T\sim -F_1-\cdots-F_r.$$
\end{remark}
\begin{lemma}\label{E}
Notation as in Rem.~\ref{relative}
with $r=3$. Let 
$S_3\subset Z$ be the section of $\ph$ corresponding to the projection
$\ol_T(D_1)\oplus\ol_T(D_2)\oplus\ol_T(D_3)\twoheadrightarrow\ol_T(D_3)$, and let $h\colon X\to Z$ be the blow-up of $S_3$, with exceptional divisor $E_3\subset X$.

 Then $E_3\cong \pr_T(\ol(-K_T+D_3-D_1)\oplus\ol(-K_T+D_3-D_2))$ and 
$(-K_X)_{|E_3}$
 is the tautological line bundle.
\end{lemma}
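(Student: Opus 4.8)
The statement to prove is Lemma~\ref{E}: with notation as in Remark~\ref{relative} (for $r=3$), if $S_3$ is the section corresponding to the projection onto $\ol_T(D_3)$ and $h\colon X\to Z$ is the blow-up of $S_3$ with exceptional divisor $E_3$, then $E_3\cong\pr_T(\ol(-K_T+D_3-D_1)\oplus\ol(-K_T+D_3-D_2))$ and $(-K_X)_{|E_3}$ is the tautological line bundle. My plan is to identify $E_3$ as the projectivized normal bundle of $S_3$ in $Z$, then compute that normal bundle, and finally pin down the restriction of $-K_X$ via the adjunction/blow-up formula.

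\textbf{Step 1: Identify $S_3$ and its normal bundle.} Since $h$ is the blow-up of the smooth center $S_3$, standard theory gives $E_3\cong\pr_{S_3}(\ma{N}_{S_3/Z})$, the projectivization of the normal bundle. Because $\ph_{|S_3}\colon S_3\to T$ is an isomorphism (a section of a projective bundle), I will identify $S_3$ with $T$ and compute $\ma{N}_{S_3/Z}$ as a rank-$2$ bundle on $T$. The section $S_3$ is the vanishing locus of the two complementary summands; concretely, $S_3$ is cut out inside $Z$ by the sub-bundle $\ol_T(D_1)\oplus\ol_T(D_2)$, so its normal bundle should be the restriction to $S_3$ of the quotient encoding the first two summands, twisted appropriately. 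The cleanest route is to use the divisors $F_i$ from Remark~\ref{relative}: I expect that $S_3$ equals the scheme-theoretic intersection $F_1\cap F_2$ (the locus where both the $\ol(D_1)$- and $\ol(D_2)$-directions vanish), a transverse intersection of two divisors, so that $\ma{N}_{S_3/Z}\cong(\ol_Z(F_1)\oplus\ol_Z(F_2))_{|S_3}$.

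\textbf{Step 2: Compute the normal bundle explicitly.} Using $F_i\sim H-\ph^*(D_i)$ from Remark~\ref{relative}, I need the restriction of $H$ to $S_3$. On the section corresponding to the projection onto $\ol_T(D_3)$, the tautological bundle $\ol_Z(H)$ restricts to $\ol_T(D_3)$ (this is the defining property of that section: $H_{|S_3}\sim D_3$ under the identification $S_3\cong T$). Hence $(\ol_Z(F_i))_{|S_3}\cong\ol_T(D_3-D_i)$ for $i=1,2$, giving $\ma{N}_{S_3/Z}\cong\ol_T(D_3-D_1)\oplus\ol_T(D_3-D_2)$. Therefore $E_3\cong\pr_T(\ol(D_3-D_1)\oplus\ol(D_3-D_2))$. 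This differs from the claimed bundle by a twist by $\ol_T(-K_T)$; since $\pr_T(\ma{E})\cong\pr_T(\ma{E}\otimes\ma{L})$ for any line bundle $\ma{L}$, twisting by $\ol_T(-K_T)$ is harmless and gives exactly $\pr_T(\ol(-K_T+D_3-D_1)\oplus\ol(-K_T+D_3-D_2))$, matching the statement.

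\textbf{Step 3: Identify the restriction of $-K_X$.} For a blow-up of a smooth codimension-$2$ center one has $K_X\sim h^*K_Z+E_3$, so $-K_X\sim h^*(-K_Z)-E_3$. I restrict this to $E_3$ and use that $(\ol_X(E_3))_{|E_3}\cong\ol_{E_3}(-1)$ is the tautological line bundle of $\pr_{S_3}(\ma{N}_{S_3/Z})$ (with the relevant sign/normalization convention), while $(h^*(-K_Z))_{|E_3}$ is pulled back from $S_3\cong T$. Using $K_Z-\ph^*K_T\sim-F_1-F_2-F_3$ from Remark~\ref{relative} to express $-K_Z$, and restricting to $S_3$, I compute $(-K_Z)_{|S_3}$ as a line bundle on $T$; combining this with the $-E_3$ term should show that $(-K_X)_{|E_3}$ is precisely the tautological line bundle of the projective bundle $E_3$ in the normalization where $\pr_T(\ma{E})$ has been written with $\ma{E}=\ol(-K_T+D_3-D_1)\oplus\ol(-K_T+D_3-D_2)$. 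The bookkeeping consists in checking that the pullback term from $T$ exactly absorbs the $-K_T$-twist introduced in Step~2, so that no residual $\ph^*$-twist survives.

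\textbf{Main obstacle.} The conceptual content is routine; the real care is in the normalizations and sign conventions. The chief difficulty will be matching the twist: the intrinsic normal bundle computation yields $\pr_T(\ol(D_3-D_1)\oplus\ol(D_3-D_2))$, whereas the statement presents the bundle twisted by $-K_T$, and the claim that $(-K_X)_{|E_3}$ is \emph{the} tautological line bundle is precisely a statement about which normalization of the projective bundle one fixes. I must therefore track the tautological divisor conventions (the meaning of $\ol_{E_3}(1)$ and the sign of $(\ol_X(E_3))_{|E_3}$) consistently with Remark~\ref{relative}, so that the $-K_T$-twist appearing in the stated bundle is exactly what makes $-K_X$ restrict to the tautological class with no extra pullback from $T$. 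Verifying this compatibility is the only genuinely delicate point.
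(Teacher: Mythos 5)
Your proposal is correct and takes essentially the same route as the paper: identify $S_3=F_1\cap F_2$ so that $\ma{N}_{S_3/Z}\cong\ol_T(D_3-D_1)\oplus\ol_T(D_3-D_2)$ (using $H_{|S_3}\sim D_3$), apply $-K_X\sim h^*(-K_Z)-E_3$, and track the tautological normalization — and the bookkeeping in your Step 3 does close up exactly as you predict, since $(-K_Z)_{|S_3}\sim -K_T+2D_3-D_1-D_2$ and $\ma{N}^{\vee}_{S_3/Z}\otimes\ol_T(-K_T+2D_3-D_1-D_2)\cong\ma{N}_{S_3/Z}\otimes\ol_T(-K_T)$, which is precisely the bundle in the statement. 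The only (immaterial) organizational difference is that the paper restricts the transforms $E_i'$ to $E_3$ as sections $G_i$ and applies Rem.~\ref{relative} a second time on $E_3$, whereas you restrict $-K_Z$ to $S_3$ and pull back.
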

\begin{proof}
We have
$\ma{N}_{S_3/Z}^{\vee}\cong\ol_T(D_1-D_3)\oplus\ol_T(D_2-D_3)$, $E_3\cong\pr_T(\ma{N}_{S_3/Z}^{\vee})$, and $\ol_X(-E_3)_{|E_3}$ is the tautological line bundle.

Let  $E_i'\subset X$  be the transform of $F_i\subset Z$, for $i=1,2,3$; note that
$S_3=F_1\cap F_2$ and $S_3\cap F_3=\emptyset$. Set $\sigma:=\ph\circ h\colon X\to T$ and $\sigma_3:=\sigma_{|E_3}\colon E_3\to T$.
By Rem.~\ref{relative} we have $-K_Z\sim\ph^*(-K_T)+F_1+F_2+F_3$, so that
\begin{gather*}
-K_X\sim h^*(-K_Z)-E_3\sim\sigma^*(-K_T)+E_1'+E_2'+E_3'+E_3,\\
 \text{ and }\ \ol_X(-K_X)_{|E_3}\cong \ol_{E_3}\bigl(\sigma_3^*(-K_T)+G_1+G_2\bigr)\otimes\ol_X(E_3)_{|E_3}
\end{gather*}
where $G_i:=E_i'\cap E_3$ for $i=1,2$. Note that $G_1$ and $G_2$
 are the sections of $\sigma_3$
corresponding to the two summands of $\ol_T(D_1-D_3)\oplus\ol_T(D_2-D_3)$,
 so by Rem.~\ref{relative} we have $\ol_{E_3}(G_1+G_2)\cong \ol_X(-2E_3+\sigma^*(2D_3-D_1-D_2))_{|E_3}$ . Finally we get
$$ \ol_X(-K_X)_{|E_3}\cong
  \ol_X(-E_3)_{|E_3}\otimes\ol_{E_3}\bigl(\sigma_3^*(-K_T+2D_3-D_1-D_2)\bigr),
$$
which yields the statement.
\end{proof}
In a similar way one shows the following.
\begin{lemma}\label{E'}
  Notation as in Rem.~\ref{relative}
with $r=3$. For $i=2,3$
let 
$S_i\subset Z$ be the section of $\ph$ corresponding to the projection
$\ol_T(D_1)\oplus\ol_T(D_2)\oplus\ol_T(D_3)\twoheadrightarrow\ol_T(D_i)$, and let $h\colon X\to Z$ be the blow-up of $S_2$ and $S_3$. Let $E_1'\subset X$ be the transform of $F_1\subset Z$. 

 Then $E_1'\cong \pr_T(\ol(-K_T+D_2-D_1)\oplus\ol(-K_T+D_3-D_1))$ and 
$(-K_X)_{|E_1'}$
 is the tautological line bundle.
\end{lemma}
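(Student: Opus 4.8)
The plan is to mirror the proof of Lemma~\ref{E}, since Lemma~\ref{E'} is its natural companion: in Lemma~\ref{E} we blew up the single section $S_3$ and computed the exceptional divisor $E_3$, whereas here we blow up the \emph{two} sections $S_2,S_3$ and track instead the strict transform $E_1'$ of the divisor $F_1$. First I would record that $S_2=F_1\cap F_3$ and $S_3=F_1\cap F_2$ inside $Z$, so both centers of the blow-up lie on $F_1$, and moreover $S_2\cap S_3=F_1\cap F_2\cap F_3=\emptyset$ (the three sections are pairwise disjoint since they correspond to distinct summands). Thus $h$ restricts over $F_1$ to the blow-up of $F_1$ along the two disjoint sections $S_2,S_3\subset F_1$, and $E_1'\to F_1$ is exactly this blow-up.

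The key computation is to identify $F_1$ and the normal data. By Rem.~\ref{relative}, $F_1=\pr_T(\ol(D_2)\oplus\ol(D_3))$, a $\pr^1$-bundle over $T$, and the two sections $S_2,S_3$ correspond to its two summands. The point is that the strict transform $E_1'$ of a $\pr^1$-bundle under the blow-up of two disjoint sections of that very bundle is isomorphic to $F_1$ itself: blowing up a section of a $\pr^1$-bundle and taking the strict transform of the ambient surface-bundle is an isomorphism fiberwise (on each $\pr^1$-fiber of $F_1\to T$ we are blowing up two points of the fiber, but the fiber is a curve, so the strict transform of $F_1$ meets each fiber in the proper transform, which is again a $\pr^1$). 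Hence I expect $E_1'\cong F_1\cong\pr_T(\ol(D_2)\oplus\ol(D_3))$, which after a twist can be rewritten as $\pr_T(\ol(-K_T+D_2-D_1)\oplus\ol(-K_T+D_3-D_1))$ — the twist by $\ol(-K_T-D_1)$ being the natural normalization that matches the tautological bundle claim.

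To pin down $(-K_X)_{|E_1'}$, I would run the same adjunction bookkeeping as in Lemma~\ref{E}. Writing $-K_X\sim h^*(-K_Z)-E_2-E_3\sim\sigma^*(-K_T)+E_1'+E_2'+E_3'+E_2+E_3$ via Rem.~\ref{relative}, I restrict to $E_1'$ and compute the intersections of $E_1',E_2',E_3',E_2,E_3$ with $E_1'$. The divisors $E_2,E_3$ meet $E_1'$ in the two sections of the $\pr^1$-bundle $E_1'\to T$ (the exceptional loci over $S_2,S_3$), while $E_2',E_3'$ are disjoint from $E_1'$ or meet it in a controlled section, and $E_1'{}_{|E_1'}$ is the relevant self-intersection term. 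Collecting these sections and applying the twisting identity of Rem.~\ref{relative} exactly as in the previous lemma should collapse everything to show $(-K_X)_{|E_1'}$ is the tautological line bundle of the stated projectivization.

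The main obstacle will be the careful sign- and twist-bookkeeping in the restriction $(-K_X)_{|E_1'}$: one must correctly determine which of the $E_i,E_i'$ restrict to sections of $E_1'\to T$ versus which restrict trivially or are disjoint, and then apply the normalization by $\ol(-K_T-D_1)$ consistently so that the two summands come out as $\ol(-K_T+D_2-D_1)$ and $\ol(-K_T+D_3-D_1)$. Since the lemma is stated as following ``in a similar way,'' I would lean heavily on the structure of the Lemma~\ref{E} computation and present only the points where blowing up two sections (rather than one) and tracking a strict transform (rather than an exceptional divisor) change the intersection pattern.
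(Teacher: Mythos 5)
Correct, and essentially the same approach as the paper, which omits the proof of Lemma~\ref{E'} as being ``similar'' to that of Lemma~\ref{E}: your plan is exactly that adaptation, with the correct key observation that $E_1'\cong F_1\cong\pr_T(\ol(D_2)\oplus\ol(D_3))$ because the centers $S_2,S_3$ are divisors inside $F_1$, so taking the strict transform does not change $F_1$, and with the correct normalizing twist $\ol(-K_T-D_1)$. The one hedge you leave does resolve favorably: $E_2'$ and $E_3'$ are disjoint from $E_1'$, since $F_1\cap F_2=S_3$ and $F_1\cap F_3=S_2$ are precisely the transversally-met blow-up centers; with that, restricting $-K_X\sim\sigma^*(-K_T)+E_1'+E_2'+E_3'+E_2+E_3$ to $E_1'$ collapses (the contributions of $E_2,E_3$ cancel against $E_1'{}_{|E_1'}=F_{1|F_1}-S_2-S_3$) to $H_{|F_1}+(\ph_{|F_1})^*(-K_T-D_1)$, which is indeed the tautological divisor of $\pr_T(\ol(-K_T+D_2-D_1)\oplus\ol(-K_T+D_3-D_1))$.
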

We will also need the following properties.
\begin{remark}\label{completeint}
  Let $X$ be a smooth projective variety. Then $\N(X)$ is generated, as a vector space, by classes of complete intersections of very ample divisors.
\end{remark}
\begin{proof}
Let $n$ be the dimension of $X$
  and $H$ a very ample divisor. By the hard Lefschetz theorem, the linear map
  \begin{gather*}
    \Nu(X)\la\N(X)\\
    [D]\mapsto [D\cdot H^{n-2}]
  \end{gather*}
is an isomorphism. Since the ample cone has maximal dimension in $\Nu(X)$, we can choose very ample divisors $H_1,\dotsc,H_{\rho_X}$ on $X$ such that their classes generate $\Nu(X)$; then their images $[H_1\cdot H^{n-2}],\dotsc,[H_{\rho_X}\cdot H^{n-2}]$ generate $\N(X)$. 
\end{proof}
\begin{lemma}\label{levico}
  Let $Y$ be a smooth Fano variety and $\xi\colon Y\to T$ a smooth morphism with fiber $\pr^1$.
  Let $A_1,A_2\subset Y$ be disjoint prime divisors such that $\xi$ is finite on $A_i$ for $i=1,2$.
  Then at least one among $A_1,A_2$ is a section of $\xi$.
  \end{lemma}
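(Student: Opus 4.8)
The plan is to reduce the statement to a surface computation on a geometrically ruled surface, where it becomes a combination of the Hodge index theorem and the classification of negative curves. First I would pass to a curve in the base: if $\dim T\geq 2$, choose a general complete intersection curve $C\subset T$ of very ample divisors, so that $C$ is smooth and irreducible, and set $S:=\xi^{-1}(C)$. Since $\xi$ is smooth with fiber $\pr^1$, the induced map $\pi\colon S\to C$ is a geometrically ruled surface, and $A_i':=A_i\cap S$ is, for general $C$, an irreducible reduced curve (the preimage of a general complete intersection curve under the finite morphism $A_i\to T$ is irreducible, as $A_i$ is irreducible and $T$ is normal). The two curves $A_1',A_2'$ are disjoint, and $A_i'\cdot f=d_i$, where $f$ is a fiber of $\pi$ and $d_i:=\deg(\xi_{|A_i})$; moreover $(-K_Y)_{|S}$ is ample, being the restriction of an ample class. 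The case $\dim T=1$ is this surface situation directly. Thus $A_i$ is a section if and only if $d_i=1$, and it suffices to show that $d_1=1$ or $d_2=1$.

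The numerical step uses that $\rho_S=2$, so by the Hodge index theorem the intersection form on $\N(S)$ has signature $(1,1)$. As $f^2=0$ and $f\neq 0$, the class $f$ is isotropic and $f^{\perp}=\R f$. Disjointness gives $A_1'\cdot A_2'=0$. Consider $D:=d_2A_1'-d_1A_2'$; then $D\cdot f=d_2d_1-d_1d_2=0$, so $D\in f^{\perp}=\R f$ and therefore $D^2=0$. Expanding $D^2$ and using $A_1'\cdot A_2'=0$ yields $d_2^2\,(A_1')^2+d_1^2\,(A_2')^2=0$. Hence either one of $(A_1')^2,(A_2')^2$ is strictly negative, or both vanish.

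The main geometric input handles the first alternative: on a geometrically ruled surface every irreducible curve of negative self-intersection is a section. Indeed, writing $C_0$ for a section of minimal self-intersection $C_0^2=-e$, any irreducible curve other than $C_0$ and $f$ has non-negative self-intersection (for $e\geq 0$ this follows from $A_i'\cdot C_0\geq 0$, and for $e<0$ no irreducible curve of negative self-intersection exists at all). Thus $(A_i')^2<0$ forces $A_i'=C_0$, so $d_i=1$, contradicting the standing assumption $d_1,d_2\geq 2$ and giving the desired section.

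The main obstacle is the degenerate case $(A_1')^2=(A_2')^2=0$. Here both classes are isotropic and mutually orthogonal, hence proportional to a single nef isotropic class $\mu$ with $\mu\cdot f=1$; matching intersections with $f$ gives $d_2A_1'\equiv d_1A_2'$. This class $\mu$ defines a second fibration on $S$, distinct from $\pi$ (as $\mu\not\equiv f$), whose fibers are the disjoint curves $A_1',A_2'$. The plan is to exclude this using that $A_1',A_2'$ are irreducible, distinct, and disjoint together with the ampleness of $(-K_Y)_{|S}$: the two fibrations yield a finite morphism $S\to C\times B$ forcing, in the relevant numerics, $A_i'$ to be an actual fiber or section of degree $1$ over $C$, again contradicting $d_i\geq 2$. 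Once some $d_i=1$ is established, $\xi_{|A_i}\colon A_i\to T$ is finite and birational onto the normal variety $T$, hence an isomorphism by Zariski's main theorem, so $A_i$ is a section of $\xi$, completing the proof.
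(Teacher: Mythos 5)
Your non-degenerate case is sound and is essentially the paper's own argument: the paper writes $d_2A_1-d_1A_2\equiv\xi^*(N_0)$ for a divisor $N_0$ on $T$, chooses a complete-intersection curve $C$ with $N_0\cdot C\neq 0$, and on $S=\xi^{-1}(C)$ finds that one of the restricted curves has negative self-intersection, hence is the negative section of the ruled surface and has degree $1$ over $C$. The genuine gap is your degenerate case $(A_1')^2=(A_2')^2=0$, which you cannot avoid and do not actually handle. First, it is unavoidable: it occurs for \emph{every} choice of $C$ precisely when $d_2A_1\equiv d_1A_2$ in $\Nu(Y)$, and it can also occur for your ``general'' complete intersection even when $N_0\not\equiv 0$, because $N_0\cdot C$ depends only on the numerical classes of the chosen very ample divisors, not on their generality within the linear systems (the paper instead uses Rem.~\ref{completeint} to select classes with $N_0\cdot C\neq 0$, and treats $N_0\equiv 0$ separately).

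Second, your proposed resolution of the degenerate case is false with the data you have retained after restricting to $S$. All you know there is: $\pi\colon S\to C$ geometrically ruled, $A_1',A_2'$ disjoint irreducible curves finite over $C$ with orthogonal isotropic classes, and ampleness of a class of the form $-K_S+mf$. This does not force $d_i=1$: take $C=E$ an elliptic curve, $L$ a nontrivial $2$-torsion line bundle, and $S=\pr_E(\ol\oplus L)$. Then $|2C_0|$ is a base-point-free pencil defining a second fibration $S\to\pr^1$ whose general members are irreducible, pairwise disjoint bisections of $\pi$ with self-intersection $0$ (each isomorphic to the \'etale double cover of $E$ determined by $L$), while $2C_0+mf\equiv -K_S+mf$ is ample for $m>0$. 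Here the finite morphism $S\to E\times\pr^1$ you invoke does exist (it has degree $2$), but the fibers of the second fibration are bisections over $E$, not sections or fibers of $\pi$ --- exactly contradicting what your plan claims is ``forced''. So no surface-level argument can close this case; the obstruction is global. This is the content of the paper's claim: if $[A_1],[A_2]$ are proportional in $\Nu(Y)$, disjointness makes $A_1$ nef, the contraction of $A_1^{\perp}\cap\NE(Y)$ (which exists because $Y$ is Fano) sends $A_1$ to a point and has one-dimensional image, and since $\xi$ is finite on $A_1$ one can apply \cite[Lemma 4.9]{31} to get $Y\cong\pr^1\times T$ with $A_i=\{pt\}\times T$, hence sections. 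Your proof needs this global step (or a substitute using the Fano geometry of $Y$); it cannot be recovered from $S$ alone.
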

  \begin{proof}
    We first show the following claim:

     \medskip

    \emph{If    $[A_1]$ and $[A_2]$ are multiples in $\Nu(Y)$, then $Y\cong\pr^1\times T$ and $A_i=\{pt\}\times T$.}

    \medskip
    
Let $\lambda\in\Q_{>0}$ be such that $A_1\equiv\lambda A_2$. If $C\subset A_1$ is a curve, then $A_1\cdot C=\lambda A_2\cdot C=0$ because $A_1\cap A_2=\emptyset$. This shows that $A_1$ is nef;
let $\alpha\colon Y\to Y_0$ be the contraction 
such that $\NE(\alpha)=
A_{1}^{\perp}\cap\NE(Y)$.

We note that $\alpha(A_1)$ is a point because $\alpha(C)=\{pt\}$ for every curve $C\subset A_1$; on the other hand $[A_1]\in \alpha^*\Nu(Y_0)$, so that $\dim Y_0=1$ and $Y_0\cong\pr^1$.
Moreover $\alpha$ is finite on the fibers $F$ of $\xi\colon Y\to T$, because
$\xi$ is finite on $A_1$, hence $A_1\cdot F>0$. Then $Y\cong T\times\pr^1$ and $\alpha$ is the projection, by \cite[Lemma 4.9]{31}. This yields the claim.

\medskip

Set $d_i:=\deg\xi_{|A_i}$ for $i=1,2$, so that $d_2A_1-d_1A_2\equiv\xi^*(N_0)$ for some divisor $N_0$ on $T$.

   If $N_0\equiv 0$, then $d_2A_1\equiv d_1A_2$ and we can apply the claim.

If $N_0\not\equiv 0$, then $N_0^{\perp}\subset\N(T)$ is a hyperplane, 
and by Rem.~\ref{completeint} there exists
a curve $C\subset T$, complete intersection of very ample divisors 
$H_1,\dotsc,H_{n-3}$, such that 
 $N_0\cdot C\neq 0$. We choose   
 $H_i$ general in their linear system.

Set $S:=\xi^{-1}(C)\subset Y$ and $C_i:=A_{i\,|S}$ for $i=1,2$; then $C_1\cap C_2=\emptyset$ and by Bertini
$C,C_1,C_2$ are smooth irreducible curves, and
 $S$ is a smooth ruled surface.

Suppose that $N_0\cdot C>0$. Then
$$d_1(C_2)^2=d_1A_2\cdot C_2=\bigl(d_2A_1-\xi^*(N_0)\bigr)\cdot 
C_2=-\xi^*(N_0)\cdot 
C_2=-d_2N_0\cdot C,$$
so that
$(C_2)^2<0$. We deduce that
$C_2$ is the negative section of the ruled surface $S$,  hence $d_2=1$. Since  $\xi_{|A_2}$ is finite, $A_2$ is a section of $\xi$.

In the case $N_0\cdot C<0$, we conclude in a similar way that $A_1$ is a section.
\end{proof}
\section{Construction A}\label{first}
\noindent Let $n\in\Z_{\geq 2}$ and $T$  a smooth Fano variety of dimension $n-2$. We consider three divisors $D_1,D_2,D_3$ in $T$ 
and set 
$$Z:=\pr_T\bigl(\ol(D_1)\oplus\ol(D_2)\oplus\ol(D_3)\bigr)\stackrel{\ph}{\la} T,$$
so that $Z$ is a smooth projective variety of dimension $n$  with $\rho_Z=\rho_T+1$.
Let 
$S_i\subset Z$ be the section of $\ph$ corresponding to the projection
$\ol_T(D_1)\oplus\ol_T(D_2)\oplus\ol_T(D_3)\twoheadrightarrow\ol_T(D_i)$, for $i=1,2,3$. 
Finally let $h\colon X\to Z$  be the blow-up of $S_1,S_2,S_3$, so that $X$ is a smooth projective variety of dimension $n$  with $\rho_X=\rho_T+4$, and set $\sigma:=\ph\circ h\colon X\to T$.
\begin{lemma}\label{fanoA}
The following are equivalent:
\begin{enumerate}[$(i)$]
\item
$X$ is Fano;
\item
$-K_T+D_i-D_j$ is ample for every $i,j\in\{1,2,3\}$.
\end{enumerate}
\end{lemma}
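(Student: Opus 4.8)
The plan is to compute $-K_X$ explicitly on $X$ and use a numerical criterion to detect ampleness. Since $X$ is the blow-up of the $\pr^2$-bundle $Z$ along three disjoint sections $S_1,S_2,S_3$, with exceptional divisors $E_1,E_2,E_3$, I would first write
$$-K_X\sim h^*(-K_Z)-E_1-E_2-E_3,$$
and use Rem.~\ref{relative} to express $-K_Z\sim\ph^*(-K_T)+F_1+F_2+F_3$. Here the key observation, exactly as in the proofs of Lemmas~\ref{E} and \ref{E'}, is that $S_i=\bigcap_{j\neq i}F_j$, so after blowing up, the transform $E_i'$ of $F_i$ satisfies $h^*F_i\sim E_i'+\sum_{j\neq i}E_j$. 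Substituting, this should yield a clean expression $-K_X\sim\sigma^*(-K_T)+E_1'+E_2'+E_3'+(E_1+E_2+E_3)$, matching the formula appearing inside the proof of Lemma~\ref{E}.

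The heart of the argument is then the implication $(ii)\Rightarrow(i)$. Here I would try to realize $-K_X$ as the pullback of a relatively ample class plus something positive from $T$, but the cleanest route is probably to test $-K_X$ against the generators of $\NE(X)$. Since $\sigma\colon X\to T$ is a del Pezzo fibration, the relative cone $\NE(\sigma)$ is spanned by finitely many explicit classes: the fiber classes of the three exceptional divisors $E_i$, together with the curves in a general fiber $X_t$ (a del Pezzo surface obtained by blowing up $\pr^2$ at three points) that generate its cone of curves. I would compute $-K_X\cdot C$ on each such generator. The exceptional curves give positive values automatically; the delicate classes are those coming from the curves of $X_t$, and here the ampleness of the various $-K_T+D_i-D_j$ on $T$ is what forces positivity after one pushes the relevant numerical class back to $T$. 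Lemmas~\ref{E} and \ref{E'} are tailored to supply precisely the restrictions of $-K_X$ to $E_3$ and to $E_1'$, identifying them as tautological bundles on the respective projective bundles; these give the positivity of $-K_X$ along the fibers of those divisors and are the natural inputs. For the converse $(i)\Rightarrow(ii)$, I would take a curve realizing the failure of ampleness of some $-K_T+D_i-D_j$ and lift it to a curve in $X$ (inside one of the divisors $E_k'$ or $E_k$) on which $-K_X$ is nonpositive, contradicting the Fano hypothesis.

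The main obstacle I anticipate is organizing the relative cone $\NE(\sigma)$ cleanly enough that checking $-K_X$ against its generators reduces to finitely many verifiable ampleness conditions on $T$, and in particular confirming that the six conditions ``$-K_T+D_i-D_j$ ample'' are exactly the right ones — no more, no fewer. The combinatorics of the del Pezzo surface $F=\Bl_3\pr^2$ (its $(-1)$-curves and their numerical classes) must be tracked through the bundle structure, and one has to be sure that the interesting $(-1)$-curves are those joining pairs of the three blown-up points, whose $-K_X$-degrees encode the differences $D_i-D_j$. A convenient shortcut would be to invoke a Kleiman/relative ampleness criterion: $-K_X$ is ample iff it is positive on every nonzero class of $\overline{\NE(X)}$, and since $T$ is Fano one only needs to control $-K_X$ on the finitely many extremal classes of the fibration together with the pullback from $T$; I expect the computation to collapse to the stated six inequalities once Lemmas~\ref{E} and \ref{E'} are applied symmetrically in the indices $1,2,3$.
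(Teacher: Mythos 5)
Your direction $(i)\Rightarrow(ii)$ is essentially the paper's (restrict $-K_X$ to the exceptional divisors and use Lemma~\ref{E}; since $T$ is Fano, $\NE(T)$ is polyhedral, so a curve witnessing non-ampleness of $-K_T+D_i-D_j$ does exist and can be lifted). The genuine gap is in $(ii)\Rightarrow(i)$. Your plan establishes at best that $-K_X\cdot C>0$ for every irreducible curve $C\subset X$, i.e.\ that $-K_X$ is \emph{strictly nef}, and then invokes Kleiman's criterion. But Kleiman's criterion requires positivity on the closed cone $\overline{\NE(X)}\smallsetminus\{0\}$, not merely on classes of curves; a strictly nef divisor need not be ample, and whether a strictly nef \emph{anticanonical} divisor is ample is an open problem (Campana--Peternell), not a known reduction. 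Your proposed shortcut --- that one ``only needs to control $-K_X$ on the finitely many extremal classes of the fibration together with the pullback from $T$'' --- is circular: the cone theorem makes $\NE(X)$ locally polyhedral only in the $K_X$-negative region, which is exactly what is being proved, and there is no decomposition of $\NE(X)$ as $\NE(\sigma)$ plus a lift of $\NE(T)$; horizontal curves do not push forward to a finite set of testable classes.

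The paper closes this gap with an extra step you are missing: after proving strict nefness (by the same case analysis you outline --- curves in $E_i$ or $E_i'$ via Lemmas~\ref{E} and \ref{E'}, vertical curves via the fibers $X_t$ being smooth del Pezzo surfaces, and horizontal curves via the effective decomposition $-K_X\sim\sigma^*(-K_T)+\sum_{i}(E_i+E_i')$, which is positive on any curve not contained in the $E_i,E_i'$), it also proves that $-K_X$ is \emph{big}, by writing $-mK_X$ as the sum of an ample divisor and an effective one: the divisor $A_Z=F_1+\ph^*(-K_T)$ is an ample tautological divisor on $Z$, so $A_X=mh^*(A_Z)-\sum_i c_iE_i$ is ample for suitable $m,c_i\geq 0$, and $-mK_X\sim A_X+(\text{effective})$. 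Nef and big then gives semiampleness by the base point free theorem, and strict nefness forces the associated contraction to be finite, whence $-K_X$ is ample. Without this bigness argument (or some substitute for the unavailable polyhedrality of $\NE(X)$), your approach cannot conclude.
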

\begin{proof}
For $i=1,2,3$ let $E_i\subset X$ be the exceptional divisor over $S_i$,  $F_i\subset Z$  as in Rem.~\ref{relative}, and
  $E_i'\subset X$ its transform.

  The implication $(i)\Rightarrow(ii)$ follows directly from Lemma \ref{E}. 

Let us show the converse
$(ii)\Rightarrow(i)$. 
We first show that $-K_X$ is strictly nef, namely that $-K_X\cdot\Gamma>0$ for every irreducible curve $\Gamma\subset X$.

Again, it follows directly from Lemmas \ref{E} and \ref{E'} that $(-K_X)_{|E_i}$ and $(-K_X)_{|E_i'}$ are ample divisors respectively on $E_i$ and $E_i'$, for every $i=1,2,3$. Therefore if $\Gamma$ is contained in one of these divisors, we have $-K_X\cdot\Gamma>0$.

Assume that $\sigma(\Gamma)$ is a point. By construction, the sections $S_i$ of $\ph$ are fibrewise in general linear position, so that every fiber $X_t:=\sigma^{-1}(t)$  is a smooth del Pezzo surface. We have $\Gamma\subset X_t$ for some $t\in T$, therefore $-K_X\cdot\Gamma=-K_{X_t}\cdot\Gamma>0$.

Now assume that $\Gamma$ is not contracted by $\sigma$, and that $\Gamma\not\subset E_i, E_i'$ for $i=1,2,3$.
It follows from Rem.~\ref{relative} that:
$$-K_X\sim\sigma^*(-K_T)+\sum_{i=1}^3(E_i+E_i').$$
Then $\sigma(\Gamma)$ is an irreducible curve in $T$, and  $-K_X\cdot\Gamma=-K_T\cdot\sigma_*(\Gamma) + \sum_{i=1}^3(E_i+E_i')\cdot\Gamma>0$.

\bigskip

Now we show that $-K_X$ is big. Consider 
the divisor $A_Z:=F_1+ \varphi^*(-K_T)$. By Rem.~\ref{relative}, $F_1+\ph^*(D_1)$ is a tautological divisor for $\pr_T(\ol(D_1)\oplus\ol(D_2)\oplus\ol(D_3))$, therefore $A_Z=F_1+\ph^*(D_1)+\ph^*(-K_T-D_1)$ is a tautological divisor for
$\pr_T(\ol(-K_T)\oplus\ol(-K_T+D_2-D_1)\oplus\ol(-K_T+D_3-D_1))$, hence
 $A_Z$
 is ample on $Z$.

 Therefore there  exist non-negative integers $m,c_i$ such that the divisor
 \begin{align*}
   A_X&:=m h^*(A_Z) - \sum_{i=1}^3c_iE_i\\
&=m\bigl(E_1'+E_2+E_3+\sigma^*(-K_T)\bigr)- \sum_{i=1}^3c_iE_i\\
&  =m\sigma^*(-K_T)+mE_1'-c_1E_1+(m-c_2)E_2+(m-c_3)E_3
   \end{align*}
is ample on $X$. Then
\begin{equation*}
\begin{split}
  -mK_X & \sim m\sigma^*(-K_T)+m\sum_{i=1}^3(E_i+E_i')\\
&  =A_X+(c_1+m)E_1+c_2E_2+c_3E_3
  +mE_2'+mE_3'.
\end{split}
\end{equation*}
 We have written a  multiple of $-K_X$ as the sum of an ample divisor and an effective divisor, which implies that $-K_X$ is big. 

Finally, as $-K_X$ is strictly nef and big, it is ample by the base point free theorem.
\end{proof}
\begin{lemma}\label{deltaA}
  Suppose that $X$ is Fano.  Then $\delta_X\geq 3$, and $\delta_X=3$ if and only if $\delta_T\leq 3$.
\end{lemma}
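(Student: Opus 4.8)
The plan is to establish the sharper equality $\delta_X=\max\{3,\delta_T\}$, from which both assertions are immediate: $\delta_X\geq 3$ is then clear, and $\delta_X=3$ holds exactly when $\delta_T\leq 3$. I would work throughout with the fibration $\sigma=\ph\circ h\colon X\to T$, whose fibers have dimension $2$, together with the pushforward $\sigma_*\colon\N(X)\to\N(T)$. This map is surjective, and its kernel $K:=\Ker\sigma_*$ is the span of the relative cone $\NE(\sigma)$, with $\dim K=\rho_X-\rho_T=4$; concretely $K$ is generated by the class of a line in a $\ph$-fiber (transformed to $X$) together with the three classes $\gamma_i$ of the fibers of $E_i\to S_i$. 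Since $\dim\sigma^{-1}(t)=2$, every prime divisor $D\subset X$ is either \emph{$\sigma$-horizontal} ($\sigma(D)=T$) or \emph{$\sigma$-vertical}, i.e.\ of the form $D=\sigma^{-1}(D_T)$ for a prime divisor $D_T\subset T$; the strategy is to bound $\codim\N(D,X)$ in each case.

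For the lower bounds I would first observe that $E_3$ is a $\pr^1$-bundle over $T$ by Lemma \ref{E}, so $\rho_{E_3}=\rho_T+1$ and hence $\codim\N(E_3,X)\geq\rho_X-(\rho_T+1)=3$, giving $\delta_X\geq 3$. Next I would choose a prime divisor $D_T\subset T$ realizing $\codim\N(D_T,T)=\delta_T$ and set $D:=\sigma^{-1}(D_T)$, which is again prime since the general fiber of $\sigma$ is irreducible. As $(\sigma_{|D})_*\colon\N(D)\to\N(D_T)$ is surjective one gets $\sigma_*\bigl(\N(D,X)\bigr)=\N(D_T,T)$, whence $\dim\N(D,X)\leq\dim\N(D_T,T)+\dim K$ and therefore $\codim\N(D,X)\geq\codim\N(D_T,T)=\delta_T$; thus $\delta_X\geq\delta_T$.

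For the upper bound I would prove that $\codim\N(D,X)\leq\max\{3,\delta_T\}$ for every prime divisor $D$. If $D$ is horizontal, then $\sigma_{|D}$ is dominant, so $\sigma_*(\N(D,X))=\N(T)$; moreover $D$ meets a general fiber $X_t$ in a nonzero effective curve, whose class lies in $\bigl(\N(D,X)\cap K\bigr)\setminus\{0\}$. Hence $\dim\N(D,X)\geq\rho_T+1$ and $\codim\N(D,X)\leq 3$. If instead $D=\sigma^{-1}(D_T)$ is vertical, the computation above gives $\sigma_*(\N(D,X))=\N(D_T,T)$; the extra input needed is the reverse containment $\N(D,X)\supseteq K$, which holds because each generator of $K$ is represented by a curve lying in $D$: the classes $\gamma_i$ by a fiber of $E_i\to S_i$ over a point of $S_i$ sitting above $D_T$, and the line class by the transform of a line in $\ph^{-1}(t)$ for some $t\in D_T$. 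This yields $\dim\N(D,X)=\dim\N(D_T,T)+4$, so $\codim\N(D,X)=\codim\N(D_T,T)\leq\delta_T$.

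Combining the two estimates gives $\delta_X=\max\{3,\delta_T\}$, which is exactly the statement. The main obstacle will be the relative-numerical input: identifying $K=\Ker\sigma_*$ with the span of the four explicit classes and, for vertical divisors, verifying the containment $\N(\sigma^{-1}(D_T),X)\supseteq K$. This is where one must control the relative cone $\NE(\sigma)$ and check that each of its generators is represented by a curve lying over $D_T$, using the sections $S_i$ and the exceptional divisors $E_i$, $E_i'$ described in Lemmas \ref{E} and \ref{E'}.
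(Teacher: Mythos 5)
Your proof is correct, but it takes a genuinely different route from the paper's, and it proves more. The paper obtains $\delta_X\geq 3$ exactly as you do (with $E_1$ in place of your $E_3$); for the equivalence, however, it does not analyze divisors on $X$ at all: the implication $\delta_X=3\Rightarrow\delta_T\leq 3$ is quoted from \cite[Rem.~3.3.18]{codim}, and the converse is proved by contradiction via the structure theory for large defect: if $\delta_X\geq 4$, then Th.~\ref{delta4} gives $X\cong S\times X'$ with $S$ a del Pezzo surface with $\rho_S=\delta_X+1$, the fibration $\sigma$ is forced to be a product morphism by \cite[Lemma 2.10]{eleonora}, hence $T\cong S\times T'$ and $\delta_T\geq\delta_S=\rho_S-1=\delta_X\geq 4$. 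Your horizontal/vertical dichotomy replaces both external inputs by a self-contained computation in $\N(X)$ and yields the sharper equality $\delta_X=\max\{3,\delta_T\}$, which the paper does not state explicitly. The two facts you flag as the main obstacle do hold in Construction A, precisely because $\sigma$ is smooth and the sections $S_i$ are fibrewise in general linear position, so that \emph{every} fiber $X_t$ is an irreducible del Pezzo surface: first, $\gamma_1,\gamma_2,\gamma_3,[\ell]$ lie in $\Ker\sigma_*$ and are linearly independent (pair them against $E_1,E_2,E_3$ and $h^*H$, with $H$ a tautological divisor on $Z$; the resulting matrix is invertible), hence they span $\Ker\sigma_*$, which has dimension $\rho_X-\rho_T=4$; second, each of these classes is represented by a curve inside any prescribed fiber, which gives $\Ker\sigma_*\subseteq\N(\sigma^{-1}(D_T),X)$. (Equivalently, $\iota_*\N(X_t)=\Ker\sigma_*$ for every $t$ by \cite[Prop.~1.3]{wisndef}; this is exactly what the paper records later in Rem.~\ref{NE A}.) One small imprecision to fix: for vertical prime divisors to be of the form $\sigma^{-1}(D_T)$, and for such preimages to be prime, you need all fibers over $D_T$ to be irreducible, not merely the general fiber of $\sigma$; this is automatic here, but it is also the reason your argument, unlike the paper's, does not transfer verbatim to Lemma~\ref{deltaB}: in Construction B one must additionally verify both facts on the singular fibers lying over the branch divisor.
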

 \begin{proof}
   The divisor $E_1\subset X$ is a $\pr^1$-bundle over $T$, so that $\dim\N(E_1,X)\leq \rho_{E_1}=\rho_T+1=\rho_X-3$, which implies that $\delta_X\geq 3$.

   If $\delta_X=3$, then $\delta_T\leq \delta_X=3$ by \cite[Rem.~3.3.18]{codim}. If instead $\delta_X\geq 4$, then by Th.~\ref{delta4} we have $X\cong S\times X'$ where $S$ is a  del Pezzo surface with $\rho_S=\delta_X+1\geq 5$; moreover $\sigma$  must be a product morphism (see \cite[Lemma 2.10]{eleonora}). Since the fiber of $\sigma$ is a del Pezzo surface with $\rho=4$, we must have $T\cong S\times T'$. Then $\delta_T\geq\delta_S$ again by \cite[Rem.~3.3.18]{codim}; on the other hand since $S$ is a surface, it is easy to see that $\delta_S=\rho_S-1=\delta_X$, hence 
    $\delta_T\geq \delta_X\geq 4$. 
\end{proof}
Prop.~\ref{A} follows from Lemmas \ref{fanoA} and \ref{deltaA}.
\section{Construction B}\label{second}
\noindent Let $n\in\Z_{\geq 2}$,  $T$ a smooth Fano variety of dimension $n-2$, and $N$ a  divisor in $T$ such that $N\not\equiv 0$.
Set
$$Z:=\pr_T(\ol(N)\oplus\ol\oplus \ol)\stackrel{\ph}{\la}T$$ and let $H$ be a tautological divisor of $Z$. Note that $h^0(Z,H)=h^0(T,\ol(N)\oplus\ol\oplus \ol)\geq 2$, and in fact the
constant surjections $\ol\oplus\ol\twoheadrightarrow \ol$ yield a pencil of 
divisors $\pr_T(\ol(N)\oplus\ol)\hookrightarrow Z$ in the linear system $|H|$ (see Rem.~\ref{relative}).

We assume that a  complete intersection of general elements in the linear systems $|{H}|$ and $|2{H}|$
is smooth, and we fix such a complete intersection $S_1\subset Z$.

We also consider the divisor $D:=\pr_T(\ol\oplus\ol)\hookrightarrow Z$ given by the  projection
$\ol(N)\oplus\ol\oplus \ol\twoheadrightarrow \ol\oplus\ol$, so that $D\cong\pr^1\times T$ with $\ma{N}_{D/Z}\cong\pi_{\pr^1}^*(\ol_{\pr^1}(1))\otimes\pi_T^*(\ol_T(-N))$, and the two sections $S_2,S_3\subset D$,
$S_i\cong\{pt\}\times T\subset D$, again corresponding to the two projections  $\ol\oplus\ol\twoheadrightarrow \ol$.
\begin{remark}\label{rem}
There exists a unique divisor $H_0$ in $|H|$ that contains $S_1$; $H_0$ is smooth, $H_0\cong \pr_T(\ol(N)\oplus\ol)$, $H_{|H_0}$ is a tautological divisor, and $S_1\sim 2H_{|H_0}$.
\end{remark}
\begin{proof}
  Since $|H|$ contains smooth members, and $S_1$ is a general complete intersection, there exists $H_0\in|H|$ smooth containing $S_1$, and $\ph_{|H_0}\colon H_0\to T$ is a $\pr^1$-bundle. For general $t\in T$, $(H_0)_{|Z_t}$ is a line, and $S_1\in |2H_{|H_0}|$, so that $S_1\cap Z_t$ yields two points which determine uniquely the line  $(H_0)_{|Z_t}$.

  We note that $H_0$ must intersect $D$ in a section $S_0$ of $\ph$, given by a surjection $\lambda\colon \ol\oplus\ol\twoheadrightarrow \ol$, so that $H_0\cong\pr_T(\ma{E})$ where $\ma{E}$ is a rank $2$ vector bundle on $T$. We have commutative diagrams:
  $$\xymatrix{{Z}
    &{H_0}\ar@{_{(}->}[l]
   \\
{D}\ar@{_{(}->}[u]&{S_0}\ar@{_{(}->}[l]\ar@{_{(}->}[u]
}\qquad\qquad
\xymatrix{{\ol(N)\oplus\ol\oplus\ol}
    \ar@{->>}[r]\ar@{->>}[d]^{\pi}&{\ma{E}}\ar@{->>}[d]^{\sigma}\\
{\ol\oplus\ol}\ar@{->>}[r]^{\lambda}&{\ol}
}$$
Since $\lambda$ and $\pi$ have a section, this yields a section of $\sigma$, which implies that $\ma{E}\cong \ol(N)\oplus\ol$ and gives the rest of the statement.
\end{proof}  
\begin{remark}\label{disj}
We have $H_0\cap S_2=H_0\cap S_3=D\cap S_1=\emptyset$; in particular $S_1,S_2,S_3$ are pairwise disjoint.
\end{remark}
\begin{proof}
Every divisor in $|H|$ intersects $D$ in $\{pt\}\times T$, and so it follows that $H_0$ is disjoint from $S_2$ and $S_3$ by generality.
  
Furthermore, the section $H_0 \cap D$ of $H_0$ has normal bundle $\ma{N}_{H_0 \cap D/H_0} \cong \ol_T(-N)$, and is given by a surjection $\ol(N)\oplus \ol\twoheadrightarrow \ol$.

Note that $S_1$ does not intersect any section of $H_0$ with normal bundle $\ol_T(-N)$, and it follows that $S_1$ is disjoint from $D$.
\end{proof}
\begin{remark}\label{doublecover}
  $S_1$ is irreducible and $\ph_{|S_1}\colon S_1\to T$ is finite of degree $2$; moreover $2N$ is linearly equivalent to the branch divisor $\Delta\subset T$,   $h^0(T,2N)>0$, and $H_{|S_1}\sim (\ph_{|S_1})^*(N)\sim R$ where  $R\subset S_1$ is the ramification divisor.
  \end{remark}
\begin{proof}
The restriction $\ph_{|S_1}\colon S_1\to T$ is finite of degree $2$, because if $\dim(S_1\cap Z_t)>0$ for some $t\in T$, we should have $S_1\cap D\neq \emptyset$, contradicting Rem.~\ref{disj}. Let $R\subset S_1$ be the ramification divisor and $\Delta\subset T$ the branch divisor.
By
adjunction
$$-K_{S_1}=(-K_Z-3{H})_{|S_1}=(\ph_{|S_1})^*\ol_T(-K_T-N),$$ 
so by the Hurwitz formula $ (\ph_{|S_1})^*(N)\sim R$ and
$2N\sim\Delta$. In particular $\ph_{|S_1}$ is not \'etale because $N\not\equiv 0$, and we deduce that $S_1$ is irreducible. We also note that
 $h^0(T,2N)>0$ and
 that $S_1$ is Fano if and only if $-K_T-N$ is ample. Finally we have  $D\sim H-\ph^*(N)$ (see Rem.~\ref{relative}) and $D\cap S_1=\emptyset$ by Rem.~\ref{disj}, thus $H_{|S_1}\sim \ph_{|S_1}^*(N)$.
\end{proof}
  Let $h\colon X\to Z$ be the blow-up of $S_1,S_2,S_3$, so that $X$ is a smooth projective variety with $\rho_X=\rho_T+4$, and set $\sigma:=\ph\circ h\colon X\to T$.
\begin{lemma}\label{fanoB}
The following are equivalent:
\begin{enumerate}[$(i)$]
\item
$X$ is Fano;
\item
 $-K_T\pm N$ is ample on $T$.
\end{enumerate}
\end{lemma}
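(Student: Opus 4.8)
The plan is to mimic the structure of the proof of Lemma~\ref{fanoA}, adapting it to the geometry of Construction~B. As before, the implication $(i)\Rightarrow(ii)$ should follow from the restriction of $-K_X$ to suitable exceptional or transform divisors. First I would set up notation: let $E_1,E_2,E_3$ be the exceptional divisors over $S_1,S_2,S_3$, and let $E_1'$ be the transform of the divisor $H_0\in|H|$ containing $S_1$ (from Rem.~\ref{rem}), since $S_2,S_3$ are still sections of $\ph$ via the summands $\ol\oplus\ol$, whereas $S_1$ is now a double cover of $T$. Using $-K_Z\sim\ph^*(-K_T)+3H$ together with Rem.~\ref{relative} and the identities $D\sim H-\ph^*(N)$, $S_1\sim 2H_{|H_0}$, I would compute $(-K_X)_{|E_i}$ and $(-K_X)_{|E_1'}$. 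From Rem.~\ref{doublecover} we have $-K_{S_1}\cong(\ph_{|S_1})^*\ol_T(-K_T-N)$, so the exceptional divisor $E_1\cong\pr_{S_1}(\ma N_{S_1/Z}^\vee)$ fibers over $S_1$; ampleness of $(-K_X)_{|E_1}$ should force $-K_T-N$ to be ample, and a symmetric computation on $E_2$ or $E_3$ (which sit over $T$ with the asymmetry coming from the $\ol(N)$ summand) should force $-K_T+N$ to be ample. This gives $(i)\Rightarrow(ii)$.

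For the converse $(ii)\Rightarrow(i)$, I would again prove that $-K_X$ is strictly nef and big and then invoke the base point free theorem, exactly as in Lemma~\ref{fanoA}. For strict nefness, I would stratify an arbitrary irreducible curve $\Gamma\subset X$ into cases: $\Gamma$ contained in one of the distinguished divisors $E_1,E_2,E_3,E_1'$ (handled by showing the relevant restrictions $(-K_X)_{|E_i}$, $(-K_X)_{|E_1'}$ are ample, which follows from the computations above once $(ii)$ holds); $\Gamma$ contracted by $\sigma$, in which case $\Gamma$ lies in a fiber $X_t=\sigma^{-1}(t)$ that is a smooth del Pezzo surface (the blow-up of $\pr^2$ at the $2+1+1$ points cut out by $S_1,S_2,S_3$ in general position), so $-K_X\cdot\Gamma=-K_{X_t}\cdot\Gamma>0$; and finally $\Gamma$ dominating a curve in $T$ and avoiding the distinguished divisors, where the expression $-K_X\sim\sigma^*(-K_T)+\sum(E_i)+E_1'+\cdots$ obtained from Rem.~\ref{relative} gives positivity directly. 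For bigness, I would imitate the argument with $A_Z$: choose an ample tautological-type divisor on $Z$, pull it back and subtract small multiples of the $E_i$ to obtain an ample $A_X$, then write a multiple of $-K_X$ as $A_X$ plus an effective combination of the exceptional and transform divisors.

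The main obstacle I anticipate is the asymmetry introduced by $S_1$ being a \emph{double} cover rather than a section. In Construction~A all three $S_i$ are sections, so Lemmas~\ref{E} and \ref{E'} apply uniformly and give clean $\pr^1$-bundle descriptions of the exceptional divisors; here $E_1\to S_1$ is a $\pr^1$-bundle over a genuinely $2:1$ base, and the transform $E_1'$ of $H_0\cong\pr_T(\ol(N)\oplus\ol)$ carries $S_1\sim 2H_{|H_0}$, so the adjunction bookkeeping for $(-K_X)_{|E_1}$ and $(-K_X)_{|E_1'}$ requires care with the ramification divisor $R$ and the relation $H_{|S_1}\sim(\ph_{|S_1})^*(N)$ from Rem.~\ref{doublecover}. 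I expect the key computation to be verifying that $(-K_X)_{|E_1}$ is ample precisely when $-K_T-N$ is ample, using $-K_{S_1}\cong(\ph_{|S_1})^*\ol_T(-K_T-N)$ and the conormal bundle of $S_1$ in $Z$, which one extracts from the two defining equations in $|H|$ and $|2H|$. Once both ampleness conditions $-K_T\pm N$ are in hand, the del Pezzo fiber argument and the big-plus-strictly-nef argument should go through with only routine modifications from Construction~A.
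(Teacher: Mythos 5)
Your plan follows the paper's architecture exactly: prove $(i)\Rightarrow(ii)$ by restricting $-K_X$ to distinguished divisors, and $(ii)\Rightarrow(i)$ by showing $-K_X$ is strictly nef and big and invoking the base point free theorem, with the same $A_Z$/$A_X$ bigness trick. However, one of your steps genuinely fails. In $(ii)\Rightarrow(i)$ you dispose of curves contracted by $\sigma$ by asserting that every fiber $X_t$ is a \emph{smooth} del Pezzo surface, the blow-up of $Z_t\cong\pr^2$ at $2+1+1$ points in general position. That is true in Construction A, but false in Construction B: $\ph_{|S_1}\colon S_1\to T$ is a double cover whose branch divisor $\Delta\sim 2N$ is effective and non-zero (Rem.~\ref{doublecover}), so for every $t\in\Delta$ the scheme $S_1\cap Z_t$ is a non-reduced length-$2$ point and $X_t$ is singular, with an $A_1$ point (cf.\ \S\ref{properties}). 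Your inequality $-K_X\cdot\Gamma=-K_{X_t}\cdot\Gamma>0$ is therefore unjustified for curves in these fibers; to salvage it you would have to prove separately that these singular Gorenstein fibers still have ample anticanonical class. The paper never uses the fibers: it writes $-K_X\sim\sigma^*(-K_T+N)+\w{H}_0+2\w{D}+E_2+E_3$ and, for a curve $\Gamma$ in a fiber which is not an $h$-fiber and not contained in the listed divisors, gets $-K_X\cdot\Gamma\geq h^*(D)\cdot\Gamma=D\cdot h_*(\Gamma)>0$ because $D_{|Z_t}$ is a line in $Z_t$. Note that this decomposition (or any workable one) involves $\w{D}$, which is missing from your list of distinguished divisors; your displayed formula $-K_X\sim\sigma^*(-K_T)+\sum E_i+E_1'+\cdots$ does not hold with non-negative coefficients in Construction B, so the case of horizontal curves also needs $\w{D}$.

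In $(i)\Rightarrow(ii)$ your attributions are exactly backwards. Since $S_1$ is a complete intersection of members of $|H|$ and $|2H|$ and $H_{|S_1}\sim\ph_{|S_1}^*(N)$ (Rem.~\ref{doublecover}), one has $\ma{N}_{S_1/Z}\cong\ph_{|S_1}^*\bigl(\ol(N)\oplus\ol(2N)\bigr)$ and $(-K_Z)_{|S_1}\cong\ph_{|S_1}^*\ol_T(-K_T+2N)$, so the standard blow-up computation shows that $(-K_X)_{|E_1}$ is the tautological class of $\pr_{S_1}\bigl(\ph_{|S_1}^*(\ol(-K_T+N)\oplus\ol(-K_T))\bigr)$: ampleness of $(-K_X)_{|E_1}$ forces $-K_T+N$ ample, not $-K_T-N$. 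Symmetrically, Lemma~\ref{E} gives that $(-K_X)_{|E_2}$ is tautological on $\pr_T(\ol(-K_T-N)\oplus\ol(-K_T))$, so $E_2,E_3$ force $-K_T-N$ ample, not $-K_T+N$. The heuristic you rely on, that $-K_{S_1}\cong\ph_{|S_1}^*\ol_T(-K_T-N)$ should make $E_1$ detect $-K_T-N$, is precisely what misleads you: $(-K_X)_{|E_1}$ is not computed from $-K_{S_1}$. Since both $E_1$ and $E_2,E_3$ are on your list, carrying out the computations honestly would still recover both halves of $(ii)$, so this part is self-correcting rather than fatal; in fact your decision to restrict to $E_1$ is not only sound but necessary, because among $E_1,E_2,E_3,\w{H}_0,\w{D}$ it is the only divisor whose restriction detects $-K_T+N$ (the paper's own appeal to $\w{D}$ via Lemma~\ref{E'} at this point again yields only $-K_T-N$, so the restriction to $E_1$ is what a complete argument should use).
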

\begin{proof}
Let $E_i\subset X$ be the exceptional divisor over $S_i$, and let $\w{H}_0$ and $\widetilde D$ be the transforms of $H_0$ and $D$. 

We show $(i)\Rightarrow (ii)$. If $X$ is Fano, by restricting $-K_X$ to $E_2$ (or $E_3$) and using Lemma \ref{E} we see that $-K_T-N$ is ample. 
Then restricting $-K_X$ to $\widetilde D$ and using Lemma \ref{E'} we see that $-K_T+N$ is ample.

Let us show the converse  $(ii)\Rightarrow (i)$. 
Again, it follows directly from Lemmas \ref{E} and \ref{E'} that $(-K_X)_{|E_i}$ for $i=2,3$ and $(-K_X)_{|\widetilde D}$ are ample divisors respectively on $E_i$ and $\widetilde D$.

We show that $(-K_X)_{|\w{H}_0}$ is ample on $\w{H}_0$. Since $\w{H}_0=h^*{H_0}-E_1$ by Rem.~\ref{rem} and \ref{disj},
we have $\ol_X(\w{H}_0)_{|\w{H}_0}\cong \ol_Z(H_0)_{|H_0} \otimes \ol_{H_0}(-S_1)\cong \ol_{H_0}(-H_{|H_0})$ 
under the isomorphism $\w{H}_0\cong H_0$. Set $\ph_0:=\ph_{|H_0}\colon H_0\to T$. Using  Rem.~\ref{rem} we get:
\begin{align*}
\ol_X(-K_X)_{|{\widetilde H_0}} &\cong \ol_{\w{H}_0}(-K_{\w{H}_0}) \otimes \ol_X(\w{H}_0)_{|\w{H}_0} \cong \ol_{H_0}(-K_{H_0}-H_{|H_0})\\ &\cong
\ol_{H_0}\bigl(\ph_0^*(-K_T-N) +H_{|H_0}\bigr)
\end{align*}
which is the tautological line bundle of $\pr_T(\ol(-K_T)\oplus\ol(-K_T-N))$, and hence it is ample.

We show that $-K_X\cdot\Gamma>0$ for every irreducible curve $\Gamma\subset X$. If $\Gamma$ is contained in one of the divisors $E_2, E_3, \widetilde D,\w{H}_0$, then 
  $-K_X\cdot\Gamma>0$ by what precedes.

  Now assume that $\Gamma\not\subset E_2, E_3, \w{H}_0, \widetilde D$.
  We have $D\sim H_0-\ph^*(N)$ (see Rem.~\ref{relative}) and
  $-K_Z\sim\ph^*(-K_T-N)+3H_0\sim \ph^*(-K_T+N)+H_0+2D$. Then
    it follows from Rem.~\ref{disj} that:
$$-K_X\sim h^*(-K_Z)-E_1-E_2-E_3 \sim\sigma^*(-K_T+N)+\w{H}_0+2\widetilde D+E_2+E_3.$$
 
If $\Gamma$ is not contracted by $\sigma$, then $\sigma(\Gamma)$ is an irreducible curve in $T$, and  $-K_X\cdot\Gamma=(-K_T+N)\cdot\sigma_*(\Gamma) + (E_2+E_3+\w{H}_0+2\widetilde D)\cdot\Gamma>0$.

If instead $\sigma(\Gamma)$ is a point, then either $\Gamma$ is a fiber of $h$, and so $-K_X\cdot\Gamma=1$, or $h(\Gamma)$ is an irreducible curve in a fiber $Z_t$ of $\varphi$, for some $t\in T$. Assume the latter: then $-K_X\cdot\Gamma=(\w{H}_0+\widetilde D+h^*(D))\cdot\Gamma\geq h^*(D)\cdot \Gamma=D\cdot h_*(\Gamma)>0$,
because $D_{|Z_t}$ is a line in $Z_t\cong \pr^2$.

\medskip

Now we show that $-K_X$ is big. Consider the divisor $A_Z:=H_0+ \varphi^*(-K_T)$, which is a tautological divisor for $\pr_T(\ol(-K_T+N)\oplus\ol(-K_T)\oplus\ol(-K_T))$, and hence it
  is ample on $Z$.
 Therefore there  exist non-negative integers $m,c_i$ such that the divisor
 \begin{align*}
   A_X&:=m h^*(A_Z) - \sum_{i=1}^3c_iE_i\\
&=m\bigl(\w{H}_0+E_1+\sigma^*(-K_T)\bigr)- \sum_{i=1}^3c_iE_i\\
&  =m\sigma^*(-K_T)+m\w{H}_0+(m-c_1)E_1-c_2E_2-c_3E_3
   \end{align*}
is ample on $X$. Finally
\begin{equation*}
  -mK_X\sim m\bigl(\sigma^*(-K_T)+E_1+2\w{H}_0+\widetilde D\bigr)=A_X+m\w{H}_0+m\widetilde D + \sum_{i=1}^3c_iE_i
\end{equation*}
which implies that $-K_X$ is big and then ample, as in the proof of Lemma \ref{fanoA}.
  \end{proof}
The proof of the following lemma is very similar to the one of Lemma \ref{deltaA}, thus we omit it.
\begin{lemma}\label{deltaB}
  Suppose that $X$ is Fano.  Then $\delta_X\geq 3$, and $\delta_X=3$ if and only if $\delta_T\leq 3$.
 \end{lemma}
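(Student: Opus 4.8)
The plan is to mirror the proof of Lemma~\ref{deltaA} step by step, adapting it to the Construction~B setting. The key structural input I would use is the analogue of the first observation there: in Construction~B the exceptional divisor $E_2$ (equivalently $E_3$) over the section $S_2$ is, by Lemma~\ref{E}, a $\pr^1$-bundle over $T$, hence $\rho_{E_2}=\rho_T+1=\rho_X-3$. Since $\dim\N(E_2,X)\leq\rho_{E_2}$, the prime divisor $E_2$ certifies a Lefschetz defect of at least $3$, giving $\delta_X\geq 3$ immediately.

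For the equivalence $\delta_X=3\iff\delta_T\leq 3$, the forward implication is the easy direction: if $\delta_X=3$, then $\delta_T\leq\delta_X=3$ by \cite[Rem.~3.3.18]{codim}, exactly as in Lemma~\ref{deltaA}. The substantive direction is the contrapositive of the converse, i.e.\ showing that $\delta_X\geq 4$ forces $\delta_T\geq 4$. Here I would invoke Th.~\ref{delta4} to write $X\cong S\times X'$ with $S$ a del Pezzo surface of $\rho_S=\delta_X+1\geq 5$, then argue via \cite[Lemma~2.10]{eleonora} that the fibration $\sigma\colon X\to T$ must be compatible with this product, so that the del Pezzo surface fiber (which has $\rho=4$) splits off an $S$-factor and forces $T\cong S\times T'$. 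Applying \cite[Rem.~3.3.18]{codim} once more together with the surface identity $\delta_S=\rho_S-1=\delta_X$ yields $\delta_T\geq\delta_X\geq 4$, a contradiction with $\delta_T\leq 3$.

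The main subtlety, and the reason the lemma is phrased as ``very similar,'' is that the product-structure argument of Lemma~\ref{deltaA} used that the general fiber of $\sigma$ is a specific del Pezzo surface of $\rho=4$; in Construction~B the general fiber is $\pr^2$ blown up at the $4$ points $(S_1\cup S_2\cup S_3)\cap Z_t$, where $S_1\to T$ has degree $2$, so the two points of $S_1\cap Z_t$ together with the two section-points still give four points in general position, and the fiber is again a del Pezzo surface of Picard number $4$. I would therefore check that this fiberwise description, established in the discussion preceding Lemma~\ref{fanoB} and in Rem.~\ref{doublecover}, indeed yields $\rho=4$ fibers so that the splitting-off argument applies verbatim. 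Granting this, the fiber-compatibility step driven by \cite[Lemma~2.10]{eleonora} goes through unchanged, and the rest is formal.

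I expect the only genuine obstacle to be confirming that the product decomposition $X\cong S\times X'$ is compatible with $\sigma$ precisely as in the Construction~A case, since here $S_1$ is a double cover of $T$ rather than a section; but because the argument only uses that the general fiber of $\sigma$ is a del Pezzo surface of Picard number $4$ and that $\sigma$ has connected fibers, the degree of $S_1$ does not affect it, and the proof of Lemma~\ref{deltaA} transfers without essential change. This is exactly why the authors can legitimately omit it.
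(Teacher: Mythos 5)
Your first claim and the forward implication are fine, and they are the right adaptation of Lemma \ref{deltaA}: since $S_2$ is a section of $\ph$, the exceptional divisor $E_2$ is a $\pr^1$-bundle over $T$, so $\dim\N(E_2,X)\leq\rho_{E_2}=\rho_T+1=\rho_X-3$, giving $\delta_X\geq 3$; and $\delta_X=3\Rightarrow\delta_T\leq 3$ is again \cite[Rem.~3.3.18]{codim}.

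The converse direction, however, has a genuine gap, located exactly at the point you flag as the main subtlety. The general fiber of $\sigma$ in Construction B is the blow-up of $\pr^2$ at the four points $(S_1\cup S_2\cup S_3)\cap Z_t$, and this surface has Picard number $5$, not $4$: blowing up $4$ points of $\pr^2$ gives $\rho=1+4=5$ (compare Rem.~\ref{NE B}, where the pushforward $\N(X_t)\to\ker\sigma_*$ has a one-dimensional kernel precisely because $\rho_{X_t}=5$ while $\rho_X-\rho_T=4$). This error is material. If $\delta_X\geq 4$, Th.~\ref{delta4} gives $X\cong S\times X'$ with $\rho_S=\delta_X+1\geq 5$; in the boundary case $\delta_X=4$ one has $\rho_S=5$, which is exactly the Picard number of the general fiber of $\sigma$, so the Picard-number comparison that drives the splitting-off argument in Lemma \ref{deltaA} no longer excludes the possibility that the product decomposition aligns $S$ with the fibers, i.e.\ $X\cong S\times T$ with $S$ the degree-$5$ del Pezzo surface and $\sigma$ the projection. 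In that scenario $T$ does not split off an $S$-factor and you cannot conclude $\delta_T\geq 4$, so the lemma would fail. To exclude it you need an input specific to Construction B: the double cover $\ph_{|S_1}\colon S_1\to T$ has non-empty branch divisor $\Delta$ (it cannot be \'etale, since $N\not\equiv 0$, or since the Fano manifold $T$ is simply connected; see Rem.~\ref{doublecover}), hence $\sigma$ has singular fibers over $\Delta$ (see \S\ref{properties}), which is incompatible with $\sigma$ being the smooth projection $S\times T\to T$ with all fibers isomorphic to $S$. Once this case is ruled out, the remaining possibilities for the product morphism of \cite[Lemma 2.10]{eleonora} behave as in Lemma \ref{deltaA} (a fiber cannot be a product of curves since the general fiber has $\rho=5$), forcing $T\cong S\times T'$ and $\delta_T\geq\delta_S=\rho_S-1=\delta_X\geq 4$.
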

Prop.~\ref{B} follows from Lemmas \ref{fanoB} and \ref{deltaB}.
\section{Proof of Theorem \ref{main}}\label{proof}
\noindent We follow \cite[proof of Th.~1.1]{delta3_4folds}, where the case $\dim X=4$, $\rho_X=5$, and $T\cong\pr^2$ is considered, generalizing the proof step by step.
We begin by recalling a result on the structure of Fano varieties $X$ with $\delta_X=3$.
\begin{thm}\label{factorization} Set $n:=\dim X$.  There exists a diagram:
$$\xymatrix{
X\ar@/^{1pc}/[rrr]^{\sigma}\ar[r]_f &X_{2}\ar[r]_{\psi}&Y\ar[r]_{\xi}& T}$$
with the following properties:
\begin{enumerate}[$(a)$]
\item \label{sp} all varieties are smooth and projective;
\item \label{F} $Y$ is Fano of dimension $n-1$ and $T$ is Fano of dimension $n-2$;
\item \label{P1} $\psi$ and $\xi$ are smooth morphisms with fiber $\pr^1$;
\item \label{flat} $\sigma\colon X\to T$ is a flat contraction of relative dimension $2$;
\item \label{conic} $\psi\circ f\colon X\to Y$ is a conic bundle;
\item \label{Bi} $f$ is the blow-up of two disjoint smooth, irreducible subvarieties $B_1,B_2\subset X_2$ of codimension $2$;
\item \label{Ai} for $i=1,2$ set $A_i:=\psi(B_i)\subset Y$
and $W_i:=\psi^{-1}(A_i)\subset X_2$; $A_1$ and $A_2$ are disjoint smooth divisors in $Y$, and  $B_i$ is a section of $\psi_{|W_i}\colon W_i\to A_i$, for $i=1,2$;
\item \label{xi} $\xi$ is finite on $A_i$ for $i=1,2$.
\end{enumerate}
\end{thm}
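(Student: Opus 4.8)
The plan is to read the whole diagram off the structure theory for Fano varieties of large Lefschetz defect, namely \cite[Th.~3.3]{codim} (the result underlying Th.~\ref{delta4}), and then to refine the conic bundle it produces into the stated blow-up factorization. First I would apply \cite[Th.~3.3]{codim} with $\delta_X=3$: it produces a flat contraction $\sigma\colon X\to T$ onto a smooth Fano variety $T$ with $\dim T=n-2$ and $\rho_T=\rho_X-4$, of relative dimension $2$ and with del Pezzo general fibre, together with a factorization $\sigma=\xi\circ(\psi\circ f)$ in which $\xi\colon Y\to T$ is a smooth $\pr^1$-fibration and $\psi\circ f\colon X\to Y$ is a conic bundle. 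This yields at once $(a)$, $(d)$, the assertions on $T$ in $(b)$, the part of $(c)$ concerning $\xi$, and $(e)$; that $Y$ is Fano is part of the same theorem, and $\dim Y=n-1$ because $\xi$ has $\pr^1$-fibres over $T$.

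Next I would study the conic bundle $g:=\psi\circ f$ through its discriminant. For general $t\in T$ the restriction $g_t\colon X_t\to Y_t\cong\pr^1$ is a conic-bundle structure on the del Pezzo surface $X_t=\sigma^{-1}(t)$, whose degenerate fibres are pairs of $(-1)$-curves meeting transversally at one point; the number of such fibres is $\rho_{X_t}-2$. The discriminant of $g$ is therefore a divisor in $Y$ finite over $T$, and a Picard number count based on $\rho_X=\rho_T+4$ shows that it has exactly two irreducible components $A_1,A_2$. I would then check that $A_1,A_2$ are smooth and disjoint and that $\xi$ is finite on each (because $A_i\cap Y_t$ lies in the finite degenerate locus of $g_t$), giving $(h)$ and the divisorial part of $(g)$; here one of the $A_i$ may be a double cover of $T$ rather than a section, which is why $Y_t$ can meet the discriminant in more than two points.

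The core step is to factor the (non-elementary) conic bundle $g$ as a blow-up $f\colon X\to X_2$ followed by a smooth $\pr^1$-bundle $\psi\colon X_2\to Y$. Over each $A_i$ the fibres of $g$ are pairs of lines; choosing one component in each pair, consistently over all of $A_i$, singles out a ruling whose contraction is $f$. Concretely I would run Mori theory for the Fano variety $X$: locate the divisorial extremal contraction that contracts these chosen components, obtain $f\colon X\to X_2$, and verify that the induced $\psi\colon X_2\to Y$ is a smooth $\pr^1$-bundle. This supplies the remaining part of $(c)$ together with $(e)$ and $(f)$, with $f$ the blow-up of two disjoint smooth irreducible centres $B_1,B_2\subset X_2$ of codimension $2$. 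Finally, setting $A_i=\psi(B_i)$ and $W_i=\psi^{-1}(A_i)$, the divisor $W_i$ is the transform of the degenerate locus over $A_i$ and $B_i$ meets each fibre of $\psi_{|W_i}$ once, so $B_i$ is a section of $\psi_{|W_i}\colon W_i\to A_i$, which is $(g)$.

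I expect the main obstacle to be exactly this last factorization: realizing $g$ globally as the blow-up of a $\pr^1$-bundle. The delicate points are that the two rulings of the degenerate conics can be separated globally over each discriminant component, so that a chosen ruling is contracted by an honest morphism and not merely by a birational correspondence; that $f$ is a smooth blow-up along smooth disjoint centres rather than a small or worse contraction; and that $\psi$ is smooth everywhere, not only in codimension one. I would control these using that $-K_X$ is ample, which forces the relevant extremal contractions to be well-behaved, together with the explicit del Pezzo geometry on the fibres, adapting the four-dimensional argument of \cite{delta3_4folds} step by step to arbitrary dimension.
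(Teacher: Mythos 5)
Your overall skeleton (take $\sigma$, $\xi$ and the conic bundle $\psi\circ f$ from \cite[Th.~3.3]{codim}, then factor the conic bundle Mori-theoretically) matches the route underlying the paper, but note that the paper does not reprove any of this: its proof of Th.~\ref{factorization} is a pure citation of \cite[Prop.~3.3.1]{codim} \emph{and its proof} (paragraphs 3.3.15--3.3.17), together with \cite[Prop.~3.4 and 3.5]{eleonora} for item $(g)$. The parts you attempt to rederive yourself contain a genuine gap. Your key claim that ``a Picard number count based on $\rho_X=\rho_T+4$ shows that [the discriminant of $g=\psi\circ f$] has exactly two irreducible components'' is false as stated: for a conic bundle one has $\rho_X-\rho_Y=1+k$, where $k$ is the number of discriminant components over which the two lines of a degenerate fibre are \emph{not} interchanged by monodromy; components along which monodromy swaps the two lines contribute nothing to the Picard number and are invisible to this count. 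So $\rho_X-\rho_Y=3$ gives exactly two ``split'' components, but puts no bound on the number of non-split ones, and excluding non-split components is precisely equivalent to the smoothness of $\psi$ (indeed to the existence of your blow-up factorization at all). It is the crux of the statement, not a refinement of it.

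You do flag this as the delicate point, but the tools you propose cannot close it. Ampleness of $-K_X$ does not exclude non-split discriminant components: a smooth divisor of bidegree $(1,2)$ in $\pr^2\times\pr^2$ is a Fano conic bundle over $\pr^2$ whose discriminant is a nonempty cubic with monodromy interchanging the two lines. Likewise, Ando--Wi\'sniewski theory does give that every birational extremal ray of $\NE(g)$ is the blow-up of a smooth codimension-2 centre, so you can contract twice and reach an elementary conic bundle $\psi\colon X_2\to Y$; but nothing in that formalism forces the residual discriminant of $\psi$ to be empty. What is actually needed is the quasi-elementary/$\delta_X=3$ structure of $\sigma$, i.e.\ how $\N(X_t)$ sits inside $\N(X)$ for a general del Pezzo fibre $X_t$, and this is exactly what the cited arguments in \cite{codim} and \cite{eleonora} exploit. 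The same defect affects your treatment of $(h)$ and $(g)$: finiteness of $\xi$ on $A_i$ requires excluding that $A_i$ contains fibres of $\xi$ lying over a divisor in $T$, which your ``$A_i\cap Y_t$ is finite for general $t$'' argument does not do, and the smoothness and disjointness of $A_1,A_2$ are theorems of \cite{eleonora}, not routine checks.
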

\begin{proof}
See \cite{codim}, Prop.~3.3.1 and its proof, in particular paragraphs 3.3.15-3.3.17; see also \cite[Prop.~3.4 and 3.5]{eleonora} for $(\ref{Ai})$.
\end{proof}
We keep the same notation as in Th.~\ref{factorization}.
\begin{step}\label{sectionA}
 By Lemma \ref{levico}, up to switching $A_1$ and $A_2$, we can assume that $A_2$ is a section of $\xi\colon Y\to T$. 
\end{step}
\begin{step}\label{Z}
We can assume that there exists a commutative diagram:
$$\xymatrix{{X_2}\ar[r]^\psi\ar[d]_g&Y\ar[d]^{\xi}\\
Z\ar[r]^{\ph} &{T} 
} $$
where  $\varphi$ is a $\pr^2$-bundle, $\psi$ and $\xi$ are $\pr^1$-bundles, $g$ is the blow-up along a section $S_3\subset Z$ of $\ph$, $E:=\Exc(g)$ is a section of $\psi$, and $E\cap (B_1\cup B_2)=\emptyset$.
\end{step}
\begin{proof}
Consider the natural factorization of $f$ as a sequence of two blow-ups (see Th.~\ref{factorization}($\ref{Bi}$)):
$$X\stackrel{f_1}{\la} X_{1}\stackrel{f_2}{\la} X_2$$
where $f_2$ is the blow-up of $B_2$ and $f_1$ is the blow-up of the transform  of $B_1$.

Let us consider the morphism $\zeta:=\xi\circ \psi\circ f_2\colon X_1\to T$. 
Since both $\psi$ and $\xi$ are smooth 
by Th.~\ref{factorization}($\ref{P1}$), 
the composition $\xi\circ \psi \colon X_2\to T$ is smooth. Moreover, since $A_2\subset Y$ is a section of $\xi$ by Step \ref{sectionA}, and the center $B_2$ of the blow-up $f_2\colon X_1\to X_2$ is a section over $A_2$ 
(see Th.~\ref{factorization}($\ref{Ai}$)), 
we conclude that $B_2$ is a section of  $\xi\circ \psi \colon X_2\to T$. This implies that $\zeta\colon X_1\to T$
is a smooth contraction of relative Picard number $3$.

As in \cite[proof of Step 2.3]{delta3_4folds} one shows that  $-K_{X_1}$ is 
$\zeta$-ample;
every fiber of $\zeta$ is isomorphic to the blow-up $F$ of $\pr^2$ in two points. 
Moreover, again as in \cite[proof of Step 2.3]{delta3_4folds} we see that
every contraction of the fiber $F$ extends to a global contraction of $X_1$ over $T$.
Therefore the sequence of elementary contractions:
$$\xymatrix{F\ar[r]&{\mathbb{F}_1}\ar[d]\ar[r]&{\pr^1}\ar[d]\\
&{\pr^2}\ar[r]&
\{pt\}}$$
yields a corresponding factorization of $\zeta$:
$$\xymatrix{{X_1}\ar[r]^{f_2'}&{X_2'}\ar[d]_g\ar[r]^{\psi'}&{Y'}\ar[d]^{\xi'}\\
&{Z}\ar[r]^{\ph}&{T}}$$
We have:
\begin{enumerate}[$\bullet$]
\item
$\xi'\colon Y'\to T$ and $\psi'\colon X_2' \to Y'$ are smooth with fiber $\pr^1$, and $\ph\colon Z\to T$ is smooth with fiber   $\pr^2$;
\item $g$ is the blow-up of a smooth irreducible subvariety $S_3\subset Z$ of codimension $2$, which is a section of $\ph$;
\item
 $f_2'$ is the blow-up of a smooth irreducible subvariety $B_2'\subset X_2'$
of codimension $2$, which is a section of $\ph\circ g\colon X_2'\to T$, and is disjoint from $E:=\Exc(g)$;
\item $A_2':=\psi'(B_2')$ is a section of $\xi'$;
\item
 $E$ is a section of $\psi'\colon X_2'\to Y'$;
\item since $\ph$, $\xi'$, and $\psi'$ all have a section, they are projectivizations of vectors bundles.
\end{enumerate}

Notice that $\zeta$ is finite on $f_1(\Exc(f_1))$, because $f_2$ is an isomorphism on it, $\psi$ is finite on $B_1$, and $\xi$ is finite on $A_1$
(see  Th.~\ref{factorization}($\ref{Bi}$),($\ref{Ai}$),($\ref{xi}$)).
In particular $f_1(\Exc(f_1))$ cannot contain any $(-1)$-curve in the fiber $F$, therefore it cannot  meet any such curve, otherwise $X$ would not be Fano. Hence $B_1':=f_2'(f_1(\Exc(f_1)))$ is disjoint from $B_2'$ and from $E$.

We show that properties  of Th.~\ref{factorization} still hold, so that we can replace the original factorization of $\zeta$ with the new one.
Properties ($\ref{sp}$),($\ref{P1}$),($\ref{Bi}$) are clear. Since  $\zeta$ is finite on $f_1(\Exc(f_1))$, $\xi'$ is finite on $A_1':=\psi'(B_1')$, and we have ($\ref{xi}$).

 The composition $\psi'\circ f_2'\circ f_1\colon X\to Y'$ is a fiber type contraction with one-dimensional fibers, hence a conic bundle, which yields 
 ($\ref{conic}$) and ($\ref{flat}$). Moreover this conic bundle has only reduced fibers; this implies that $Y'$ is Fano by \cite[Prop.~4.3]{wisn}, and we have ($\ref{F}$). Finally ($\ref{Ai}$) follows from \cite[Prop.~3.4 and 3.5]{eleonora}.
\end{proof}
Set $S_i:=g(B_i)\subset Z$ for $i=1,2$.
Then $S_1$, $S_2$, and $S_3$ are pairwise disjoint smooth irreducible subvarieties of codimension $2$, and 
$X$ is the blow-up of $Z$ along $S_1\cup S_2\cup S_3$. We set $Z_t:=\ph^{-1}(t)$ for every $t\in T$.  Moreover we denote by $d$ the degree of the finite morphism $\xi_{|A_1}\colon A_1\to T$
(see Th.~\ref{factorization}$(\ref{xi})$).   
\begin{step}\label{d}
 $S_2$ is a section of $\ph$, and $\ph_{|S_1}\colon S_1\to T$ is finite of degree $d$.
\end{step}
\begin{proof}
For $i=1,2$, since $B_i$ is a section over $A_i$
by Th.~\ref{factorization}$(\ref{Ai})$, 
the degree of $S_i$ over $T$ is equal to the degree of $A_i$ over $T$; in particular $S_2$ is a section of $\ph$
 by Step \ref{sectionA}.
\end{proof}
\begin{step}\label{points}
  For every $t\in T$ and for every line $\ell\subset Z_t\cong\pr^2$, the scheme-theoretic intersection $\ell\cap (S_1\cup S_2\cup S_3)$ has length $\leq 2$.
  
  In particular, the points  $(S_1\cup S_2\cup S_3)\cap Z_t$ (with the reduced structure) are in general linear position in $Z_t$.
  \end{step}
\begin{proof}
 Let $a$ be the length of scheme-theoretic intersection $\ell\cap(S_1\cup S_2\cup S_3)$, and let $\w{\ell}\subset X$ be the transform of $\ell$. Then $1\leq -K_X\cdot \w{\ell}=3-a$, thus $a\leq 2$.
\end{proof}
\begin{step}\label{U}
  If $d=1$, then $X\to Z\stackrel{\ph}{\to} T$ is as in Construction A (Prop.~\ref{A}).
\end{step}
\begin{proof}
If $d=1$, then
$\ph\colon Z\to T$ has three pairwise disjoint sections $S_i$, which 
are fiberwise in general linear position, 
by Steps \ref{Z}, \ref{d}, and \ref{points}. 
This implies that $Z\cong \pr_{T}(\ol(D_1)\oplus \ol(D_2)\oplus \ol(D_3))$ in such a way that the three sections $S_i$ correspond to the projections onto the summands $\ol(D_i)$. Moreover $-K_T+D_i-D_j$ is ample for every $i,j\in\{1,2,3\}$ by Lemma \ref{fanoA}, and $\delta_T\leq 3$ by Lemma \ref{deltaA}.
\end{proof}
From now on we assume  that  $d\geq 2$.

For $q_1,q_2\in Z_t$ distinct points, we denote by $\overline{q_1q_2}\subset Z_t$ the line spanned by $q_1$ and $q_2$.
\begin{step}\label{H}
  Let $H_0\subset Z$ be the relative secant variety of $S_1$ in $Z$, namely the closure in $Z$ of the set:
  $$\bigcup_{q_1,q_2\in S_1\cap Z_t,q_1\neq q_2,t\in T}\overline{q_1q_2}.$$
 
Then
$H_0$ has pure dimension $n-1$, and 
for every $t\in T$, $H_0\cap Z_t$ is a union of lines $\ell$ such that  the scheme-theoretic intersection $\ell\cap S_1$ has length $\geq 2$. 
\end{step}
\begin{proof}
 For $t$ general we have $|S_1\cap Z_t|=d\geq 2$, so that  $H_0$ is non-empty.
    
We first consider the fiber product:
$$\xymatrix{{S_1\times_{T} S_1}\ar[d]\ar[r]&{S_1}\ar[d]\\
{S_1}\ar[r]& T
}$$
Since the morphism $S_1\to T$ is finite between smooth varieties, it is flat. Therefore
also $S_1\times_{T} S_1\to T$ is finite and flat, so that 
$S_1\times_{T} S_1$ has pure dimension $n-2$ and every irreducible component dominates $T$.

We also note that the morphism $S_1\times_{T} S_1\to S_1$ has a natural section, whose image is the diagonal $\Delta$, which is an irreducible component of the fiber product. We denote by $(S_1\times_{T} S_1)_0$ the union of the remaining irreducible components of the fiber product, so that $(S_1\times_{T} S_1)_0\smallsetminus\Delta$ is a dense open subset in $(S_1\times_{T} S_1)_0$.

Now we consider the dual projective bundle $Z^*\to T$. We have a morphism over $T$:
$$\alpha\colon (S_1\times_{T} S_1)_0\smallsetminus\Delta\la Z^*$$
given by $\alpha(q_1,q_2)=[\overline{q_1q_2}]$. If $[\ell]\in \text{Im}(\alpha)$, then $\ell\cap S_1$ is finite, hence $\alpha^{-1}([\ell])$ is finite. Moreover
$$\text{Im}(\alpha)=\{[\overline{q_1q_2}]\in Z_t^*\,|\,q_1,q_2\in S_1\cap Z_t,q_1\neq q_2,t\in T\}.$$

Let $\ma{S}\subset Z^*$ be the closure of $\text{Im}(\alpha)$.
If $K$ is an irreducible component of $(S_1\times_{T} S_1)_0$, 
set $\ma{S}_K:=\overline{\alpha(K\smallsetminus\Delta)}$.
Then 
$\alpha\colon K\smallsetminus\Delta\to\ma{S}_K$ is a dominant morphism between irreducible varieties, hence its image $\alpha(K\smallsetminus\Delta)$ contains an open subset of  $\ma{S}_K$; moreover
$\dim \ma{S}_K=\dim K=n-2$ and
$\ma{S}_K$ dominates $T$, because $K$ does.

We note that $\ma{S}$ is the union of $\ma{S}_K$ when $K$ varies among the finitely many irreducible components  of $(S_1\times_{T} S_1)_0$, therefore all the irreducible components of 
$\ma{S}$ are of type $\ma{S}_K$.

We conclude that $\ma{S}$ has pure dimension $n-2$ and every irreducible component dominates $T$. Moreovery every  irreducible component of $\ma{S}$ contains an open subset contained in $\text{Im}(\alpha)$.
We also note that, for every $[\ell]\in\ma{S}$, the scheme-theoretical intersection $\ell\cap S_1$ has length $\geq 2$.

Now we consider the universal family over $T$:
$$ \xymatrix{&{\ma{U}}\ar[dl]\ar[dr]^{\pi}&\\
Z^*&&Z
}$$
where  $\ma{U}=\{([\ell],p)\in Z^*\times_{T} Z\,|\,p\in\ell\}$.

The inverse image $\wi{\ma{S}}$ of $\ma{S}$ in $\ma{U}$  has pure dimension $n-1$; moreover every irreducible component of $\wi{\ma{S}}$ is the inverse image of an irreducible component of $\ma{S}$,  and  contains an open subset contained in the 
 inverse image $\wi{\ma{S}}_0$ of $\text{Im}(\alpha)$.
In particular $\wi{\ma{S}}_0$ is dense in $\wi{\ma{S}}$.

Finally we consider the images $\pi(\wi{\ma{S}}_0)$ and $\pi(\wi{\ma{S}})$ in $Z$; note that $\pi(\wi{\ma{S}}_0)$ is dense in $\pi(\wi{\ma{S}})$. By construction we have
$$\pi(\wi{\ma{S}}_0)= \bigcup_{q_1,q_2\in S_1\cap Z_t,q_1\neq q_2,t\in T}\overline{q_1q_2}\subset Z,$$
so that $H_0=\overline{\pi(\wi{\ma{S}}_0)}=\pi(\wi{\ma{S}})$.

Let $W$ be an irreducible component of $\wi{\ma{S}}$, and let $W_0$ be an open subset of $W$ contained in $\wi{\ma{S}}_0$. Then $\overline{\pi(W_0)}=\overline{\pi(W)}$, so that $\pi_0\colon W_0\to \overline{\pi(W)}$ is a dominant morphism between irreducible varieties, and its image contains an open subset $U$ of $\overline{\pi(W)}$. It is clear that  $\pi_0^{-1}(p)$ is finite for every $p\in U$, so that
$\dim \overline{\pi(W)}=\dim W=n-1$.
This shows that $H_0$ has pure dimension $n-1$. 

Moreover, since $H_0$ is the locus of lines parametrized by $\ma{S}$, we also have that for every $t\in T$, $H_0\cap Z_t$ is a union of finitely many lines $\ell$, and  the scheme-theoretic intersection $\ell\cap S_1$ has length $\geq 2$.
\end{proof}
\begin{step}\label{disjoint}
We have $H_0\cap(S_2\cup S_3)=\emptyset$.
\end{step}
\begin{proof}
Let  $\ell$ be a line in $H_0\cap Z_t$ for some $t\in T$; then
$\ell\cap(S_2\cup S_3)=\emptyset$ by Steps \ref{points} and \ref{H}.
Therefore  $H_0\cap(S_2\cup S_3)=\emptyset$.
\end{proof}
Recall that $W_2=\psi^{-1}(A_2)\subset X_2$ and that $E=\Exc(g)\subset X_2$
 (see Th.~\ref{factorization}($\ref{Ai}$) and Step \ref{Z}).
\begin{step}\label{D}
Set $D:=g(W_2)\subset Z$. Then $W_2\cong D\cong\pr^1\times T$, and $D\cap Z_t$ is a line in $Z_t$ for every $t\in T$. Moreover $D$ contains $S_2$ and $S_3$ (as $\{pt\}\times T$),  while $D\cap S_1=\emptyset$.
\end{step}
\begin{proof}
We have a commutative diagram:
$$\xymatrix{{W_2}\ar[d]_{g_{|W_2}}\ar[r]^{\ \psi_{|W_2}}&
{A_2}\ar[d]^{\xi_{|A_2}}\\
D\ar[r]_{\ph_{|D}}&{T}
}$$
where the vertical  maps are isomorphisms 
by Steps \ref{sectionA} and \ref{Z}, 
and the  horizontal  maps are $\pr^1$-bundles.

We also have $S_3=g(E\cap W_2)\subset D$; moreover $B_2\subset W_2$
(see Th.~\ref{factorization}($\ref{Ai}$)), 
hence $S_2=g(B_2)\subset D$.
By Steps \ref{Z} and \ref{d}, $S_2$ and $S_3$ are disjoint sections of the $\pr^1$-bundle $\ph_{|D}\colon D\to T$. Thus we can assume
that $D\cong\pr_{T}(\ol\oplus M)$ for some $M\in\Pic(T)$, such that the section $S_2$ corresponds to the projection $\ol\oplus M\twoheadrightarrow \ol$, so that $\ma{N}_{S_2/D}^{\vee}\cong M$.

Now we have $H_0\cap D\neq\emptyset$, because both  contain a line in $Z_t$, so that $H_0\cap D$ yields a non-zero effective divisor in $D$. On the other hand, this divisor is disjoint from both sections $S_2$ and $S_3$ by Step \ref{disjoint}.
Write $\ol_D(H_0\cap D)\cong\ol_D(mS_2+(\ph_{|D})^*G)$, with $m\in\Z$ and $G$ divisor on $T$.

We have $\ol_{S_3}\cong\ol_D(H_0\cap D)_{|S_3}\cong\ol_D((\ph_{|D})^*G)_{|S_3}$, which gives $G\sim 0$ and $\ol_D(H_0\cap D)\cong\ol_D(mS_2)$. Note that $m\neq 0$ because $H_0\cap D$ is non-zero.

Then $\ol_{S_2}\cong\ol_D(H_0\cap D)_{|S_2}\cong\ol_D(mS_2)_{|S_2}\cong M^{\otimes (-m)}$, thus $M^{\otimes m}\cong\ol_T$. Since $T$ is a Fano variety, $\Pic(T)$ is finitely generated and free, so that $M\cong\ol_T$ and 
 $D\cong\pr^1\times T$.

Finally, we note that for every $t\in T$, $D\cap Z_t$ is the line spanned by the points $S_2\cap Z_t$ and $S_3\cap Z_t$, so that $D\cap S_1=\emptyset$ by Step \ref{points}.
\end{proof}
Recall from Steps \ref{sectionA} and \ref{Z} that $\xi\colon Y\to T$ is a $\pr^1$-bundle and $A_2\subset Y$ is a section. Let $\ma{E}$ be a rank $2$ vector bundle on $T$ such that $Y=\pr_T(\ma{E})$; up to tensoring $\ma{E}$ with a line bundle, we can assume that the section $A_2$  corresponds to a surjection $\ma{E}\twoheadrightarrow\ol_T$, and we denote by $L\in\Pic(Y)$ the tautological line bundle, so that $L_{|A_2}\cong\ol_{A_2}$. Moreover let $N$ be a divisor on $T$ such that $\ol_T(N)\cong\ma{N}_{A_2/Y}^{\vee}$ under the isomorphism $T\cong A_2$, so that
 we have an exact sequence on $T$:
\begin{equation}\label{sequenceF}
0\la \ol_T(N)\la\ma{E}\la\ol_T\la 0.
\end{equation}
\begin{step}\label{vanishing}
We have $\ol_Y(A_1)\cong L^{\otimes d}$, $\ol_Y(A_2+\xi^*(N))\cong L$,  and $H^1(Y,L)=0$.
\end{step}
\begin{proof}
  Both $L$ and $\ol_Y(A_1)$ belong to the kernel of the restriction $r\colon \Nu(Y)\to\Nu(A_2)$; moreover $r$ is surjective, because for $G\in\Pic(A_2)$ the line bundle $\xi^*(((\xi_{|A_2})^{-1})^*(G))$ restricts to $G$, and $\rho_{A_2}=\rho_T=\rho_Y-1$, so that $\dim\ker r=1$ and the classes of $L$ and $A_1$ must be proportional in $\Nu(Y)$; intersecting with a fiber of $\xi$ we deduce that  $\ol_Y(A_1)\cong L^{\otimes d}$.

Similarly, we have  $\ol_Y(A_2+\xi^*(N))_{|A_2}\cong\ol_{A_2}$, and again intersecting with a fiber we get $\ol_Y(A_2+\xi^*(N))\cong L$.

  Finally we have 
$$-K_Y+L\equiv -K_Y+\frac{1}{d} A_1$$
where $-K_Y$ is ample, $1/d<1$, and $A_1$ is a smooth divisor, therefore
$H^1(Y,L)=0$
 by Kawamata-Viehweg vanishing (see \cite[Th.~9.1.18]{lazII}).
\end{proof}
\begin{step}\label{normal}
We have $X_2\cong \pr_Y(\ol\oplus{L})$,
 and $E$ corresponds (as a section of $\psi$) to the projection $\ol\oplus{L}\twoheadrightarrow\ol$. Moreover  $\ma{N}^{\vee}_{E/X_2}\cong  L$ and
 ${\ma{N}}_{S_3/Z}^{\vee}\cong \ma{E}$. 
\end{step}
\begin{proof}
By Step \ref{Z} we have a commutative diagram:
$$\xymatrix{{E}\ar[d]\ar[r]^{\psi_{|E}}& Y\ar[d]^{\xi}\\
{S_3}\ar[r]^{\ph_{|S_3}}&{T}
}$$
where the horizontal maps are isomorphisms, and the vertical maps are $\pr^1$-bundles.
Since $g\colon X_2\to Z$ is the blow-up of $S_3$, we get:
$$\mathbb{P}_{S_3}({\ma{N}}_{S_3/Z}^{\vee})\cong E\cong Y\cong \pr_{T}(\ma{E}),$$ 
therefore ${\ma{N}}_{S_3/Z}^{\vee}\cong\ma{E}\otimes M$ with $M\in\Pic(T)$. Moreover $\ma{N}^{\vee}_{E/X_2}$ is the tautological line bundle of $\mathbb{P}_{T}(\ma{E}\otimes M)$, which gives $\ma{N}^{\vee}_{E/X_2}\cong L\otimes \xi^*(M)$.

Recall from Step \ref{Z} that $E\subset X_2$ is a section of the
$\pr^1$-bundle $\psi\colon X_2\to Y$.
Let $\ma{F}$ be a rank $2$ vector bundle on $Y$ such that $X_2=\pr_Y(\ma{F})$;
up to tensoring $\ma{F}$ with a line bundle, we can assume that
the section $E$ corresponds to a surjection $\sigma\colon \mathcal{F}\twoheadrightarrow \mathcal{O}_Y$, so that $\ker\sigma\cong \ma{N}_{E/X_2}^\vee\cong L\otimes\xi^*(M)$.  We  obtain the following exact sequence over $Y$:
\begin{equation}\label{sequence}
0\longrightarrow \ker\sigma\longrightarrow \mathcal{F}\longrightarrow \mathcal{O}_Y\longrightarrow 0.\end{equation}

Now let us consider $A_2\subset Y$; 
we have $\ker\sigma_{|A_2}\cong 
L_{|A_2}\otimes\xi^*(M)_{|A_2}\cong M$, so by restricting to $A_2$ the above exact sequence  we get:
$$0\longrightarrow M\longrightarrow \mathcal{F}_{|A_2}\longrightarrow \mathcal{O}\longrightarrow 0.$$
On the other hand $\pr_{A_2}(\mathcal{F}_{|A_2})=W_2\cong \pr^1\times T$ by Step \ref{D}, and we deduce that $M=\ol_T$ and $\ker\sigma\cong L$.

Now we have $\text{Ext}^1(\mathcal{O}_Y,\ker\sigma)\cong H^1(Y,L)=0$ by Step \ref{vanishing}, 
hence  the  sequence \eqref{sequence} splits, so that $\mathcal{F}\cong\mathcal{O}_Y\oplus L$.
\end{proof}
\begin{step}\label{N}
  We have $-K_T+N$ ample on $T$, $H^1(T,N)=0$,  $H^0(T,d(d-1)N)\neq 0$, $N\not\equiv 0$,
  $\ma{E}\cong \ol_T(N)\oplus \ol_T$, and $Y\cong\pr_T(\ol(N)\oplus\ol)$. \end{step}
\begin{proof}
Set $\xi_1:=\xi_{|A_1}\colon A_1\to T$, so that $\xi_1$ is finite of degree $d$.
By Step \ref{vanishing} we have  $L\cong\ol_Y(A_2+\xi^*(N))$ and
 $\ol_Y(A_1)\cong L^{\otimes d}$, so that $L_{|A_1}\cong\ol_{A_1}(\xi_1^*(N))$
 and $\ol_Y(A_1)_{|A_1}\cong\ol_{A_1}(\xi_1^*(dN))$. We have
 $\det(\ma{E})\cong\ol_T(N)$ by
 \eqref{sequenceF}, therefore
$\ol_Y(K_Y)\cong \ol_Y(\xi^*(K_T+N))\otimes L^{\otimes(-2)}$ and
$$-K_{Y|A_1}\sim\xi_1^*(-K_T+N).$$
Since $Y$ is Fano and $\xi_1$ is finite, we deduce that $-K_T+N$ is ample on $T$, so that $H^1(T,N)=H^1(T,K_T-K_T+N)=0$ by Kodaira vanishing.
Moreover we have $\Ext^1(\ol_T,\ol_T(N))=0$, so that the sequence \eqref{sequenceF} splits.

Finally:
$$K_{A_1}\sim (K_Y+A_1)_{|A_1}\sim\xi_1^*\bigl(K_T+(d-1)N\bigr).$$
This shows that the ramification divisor $R$ of $\xi_1$ is linearly equivalent to
$(d-1)\xi_1^*(N)$; by taking the pushforward we get $(\xi_1)_*(R)\sim d(d-1)N$.
On the other hand $R$ is effective and non-zero (because  $T$ is simply connected), therefore $H^0(T,d(d-1)N)\neq 0$ and $N\not\equiv 0$.
\end{proof}
Recall from Step \ref{D} that $D\cong\pr^1\times T$ and $S_2,S_3\cong\{\text{pt}\}\times T$.
\begin{step} \label{bundleZ}
  We have $Z\cong \pr_{T}(\ol(N)\oplus \ol\oplus \ol)$ with the two sections $S_2,S_3$ corresponding to the projections onto the trivial summands; moreover
 $(\ma{N}_{D/Z})_{|\{pt\}\times T}\cong \ol_T(-N)$.
\end{step}
\begin{proof}
  We know by Steps \ref{Z}, \ref{normal}, and \ref{N} that $S_3$ is a section of $\ph\colon Z\to T$ with conormal bundle $\ma{E}\cong\ol_T(N)\oplus \ol_T$. 
  As in the proof of Step \ref{normal}, using that $H^{1}(T,N)=H^1(T,\ol_T)=0$ by Step \ref{N},  one shows that $Z\cong\pr_{T}(\mathcal{O}(N)\oplus \mathcal{O}\oplus \ol)$, with the section $S_3$ corresponding to the projection $p_3$ onto the last summand.
  
  The inclusions $S_2,S_3\hookrightarrow D\hookrightarrow Z$ yield a diagram: $$\xymatrix{{\ol_T(N)\oplus\ol_T\oplus\ol_T}\ar[r]^{\quad\quad\alpha}&{\ol_T\oplus\ol_T}
    \ar[r]^{\quad\pi_1}\ar[d]^{\pi_2}&{\ol_T}\\
&{\ol_T}&
}$$
where $\pi_i$ are the projections, $\pi_1$ corresponds to $S_2\subset D$,
$\pi_2$ corresponds to $S_3\subset D$,
and $\pi_2\circ\alpha=p_3$.

By Step \ref{N} we have $H^0(T,-N)=0$, thus $\Hom(\ol_T(N),\ol_T)=0$ and $\alpha$ factors through the projection $p_{23}\colon \ol_T(N)\oplus\ol_T\oplus\ol_T\to \ol_T\oplus\ol_T$. Then up to changing the isomorphism $Z\cong \pr_{T}(\ol(N)\oplus \ol\oplus \ol)$ we can assume that
$\alpha=p_{23}$ and $S_2\subset Z$ corresponds to the projection $p_2$ onto the second summand.
\end{proof}
Since $N\not\equiv 0$ by Step \ref{N},
as in the proof of Lemma \ref{levico} we see that
there exists 
 a curve  $C\subset T$ which is a complete intersection  of very ample divisors (general in their linear system) and such that $N\cdot C\neq 0$.
 Set $S:=\xi^{-1}(C)\subset Y$ and  $C_1:=A_{1|S}$.
\begin{step}\label{curve}
 We have
$\ol_S(C_1)=(L_{|S})^{\otimes d}$, $\deg(L_{|C_1})>0$, and 
$L_{|S}$ is nef and big in $S$.
\end{step}
\begin{proof}
 Recall that $\ol_Y(A_1)\cong L^{\otimes d}\cong  \ol_Y(d(A_2+\xi^*(N)))$ by Step \ref{vanishing}, in particular $\ol_S(C_1)=(L_{|S})^{\otimes d}$. 
 We have
 $$(C_1)^2=A_1\cdot C_1=d\bigl(A_2+\xi^*(N)\bigr)\cdot C_1
=d\xi^*(N)\cdot C_1=d^2N\cdot C\neq 0.$$
If $(C_1)^2<0$, $C_1$ should be the negative section of the ruled surface $S$, which is impossible because $\xi_{|C_1}$ has degree $d\geq 2$; therefore $(C_1)^2>0$,
 $C_1$ is nef and big in $S$, and  $L_{|S}$ too. Moreover
$\deg(L_{|C_1})=C_1\cdot L_{|S}>0$.
\end{proof}
Set
$\overline{X}_2:=\psi^{-1}(S)\subset X_2$, $\overline{Z}:=\ph^{-1}(C)\subset Z$, 
and let $\overline{\xi}$, $\overline{\psi}$, $\overline{\ph}$, $\overline{g}$ be the respective restricted morphisms, 
so that we have a diagram: $$\xymatrix{{\overline{X}_2}\ar[r]^{\overline{\psi}}\ar[d]_{\overline{g}}&S\ar[d]^{\overline{\xi}}\\
\overline{Z}\ar[r]^{\overline{\ph}} &{C}
} $$
where all varieties are smooth and projective, and $\dim \overline{X}_2=\dim \overline{Z}=3$.
We also set $\overline{W}_1:=(\overline{\psi})^{-1}(C_1)=W_1\cap \overline{X}_2\subset \overline{X}_2$ and $\overline{B}_1:=B_1\cap \overline{X}_2$, so that $\overline{B}_1$ is a section of $\overline{\psi}_{|\overline{W}_1}\colon \overline{W}_1\to C_1$ (see Th.~\ref{factorization}($\ref{Ai}$)).
\begin{step}\label{sectionK}
There exists a section $K$ of $\overline{\psi}\colon \overline{X}_2\to S$ containing $\overline{B}_1$ and disjoint from $E$.
\end{step}
\begin{proof}
By Step \ref{normal}
we have   $\overline{W}_1=\pr_{C_1}(\ol_{C_1}\oplus L_{|C_1})$. Moreover by Steps \ref{Z} and \ref{normal} we deduce that
$\overline{W}_1\cap E$ is a section of  $\overline{\psi}_{|\overline{W}_1}$, disjoint from $\overline{B}_1$, and 
corresponding to the projection
$\ol_{C_1}\oplus L_{|C_1}\to\ol_{C_1}$.
Then it is not difficult to see that $\overline{B}_1$ corresponds, as a section, to a surjection  $\tau\colon\ol_{C_1}\oplus L_{|C_1}\twoheadrightarrow L_{|C_1}$.

Let us consider the restriction $r\colon \text{Hom}(\ol_S\oplus L_{|S},L_{|S})\to \text{Hom}(\ol_{C_1}\oplus L_{|C_1},L_{|C_1})$.
We have
$$
\text{Hom}(\ol_{C_1}\oplus L_{|C_1},L_{|C_1})\cong \text{Hom}(L_{|C_1}^{\vee}\oplus \ol_{C_1},\ol_{C_1})\cong 
 H^0({C_1}, L_{|C_1})\oplus H^0({C_1},\ol_{C_1}),
$$
and similarly $\text{Hom}(\ol_{S}\oplus L_{|S},L_{|S})\cong H^0(S, L_{|S})\oplus H^0(S,\ol_{S})$. Since the restriction $H^0(S,\ol_S)\to H^0(C_1,\ol_{C_1})$ is an isomorphism,  $r$ is surjective if the restriction $H^0(S, L_{|S})\to H^0(C_1, L_{|C_1})$ is. 

We have an exact sequence of sheaves on $S$:
$$0\la L_{|S}\otimes\ol_S(-C_1)\la L_{|S}\la L_{|C_1}\la 0.$$
Using Step \ref{curve}, Serre duality and Kawamata-Viehweg vanishing:
$$H^1(S, L_{|S}\otimes\ol_S(-C_1))=H^1(S,L_{|S}^{\otimes (1-d)})=H^1(S,K_S\otimes L_{|S}^{\otimes (d-1)})=0$$
because $d\geq 2$.

We conclude that $r$ is surjective, so that $\tau$ extends to a morphism $\tau_S\colon \ol_S\oplus L_{|S}\to L_{|S}$.

We show that $\tau_S$ is surjective. Under the isomorphism
$\text{Hom}(\ol_{S}\oplus L_{|S},L_{|S})\cong \text{Hom}(\ol_{S},L_{|S})\oplus\text{Hom}(L_{|S},L_{|S})\cong \text{Hom}(\ol_{S},L_{|S})\oplus \C$,
$\tau_S$ corresponds to $(\alpha,\lambda)$.

If $\lambda=0$,   then  $\tau_S$ factors through the projection $\ol_S\oplus L_{|S}\twoheadrightarrow \ol_S$, and the same happens by restricting to $C_1$. This is impossible, because $\tau\colon \ol_{C_1}\oplus L_{|C_1}\to L_{|C_1}$ is surjective, and $\deg(L_{|C_1})>0$ by Step \ref{curve}.

We conclude that $\lambda\neq 0$ and $\tau_S$ is surjective, so
it  yields a section $K\subset \overline{X}_2$ extending $\overline{B}_1$.

We show that $K\cap E=\emptyset$. Let us consider the projection $\ol_S\oplus L_{|S}\twoheadrightarrow L_{|S}$ and the corresponding section $\widetilde{K}\subset \overline{X}_2$. Since $E\cap \overline{X}_2$ is a section corresponding to the projection onto the other summand, we have $\w{K}\cap E=\emptyset$. On the other hand, it is easy to check that $K\sim \w{K}$ in $\overline{X}_2$, hence for every curve $C\subset E\cap\overline{X}_2$ we have $K\cdot C=0$. Since $K\neq E\cap \overline{X}_2$, this implies that $K\cap E=\emptyset$.
\end{proof}
\begin{step}\label{2}
  We have $d=2$. 
\end{step}
\begin{proof}
Recall that $K\cap E=\emptyset$ and  $K\supset \overline{B}_1$ by Step \ref{sectionK}.
   Consider $\overline{g}(K)\subset \overline{Z}$, so that $K\cong \overline{g}(K)$  and 
$\overline{g}(K)\supset S_1\cap \overline{Z}$.
  If $t\in C$ is general, 
then $g^{-1}(Z_t)\cong\mathbb{F}_1$, and $K\cap g^{-1}(Z_t)$ is a section of $\mathbb{F}_1\to\pr^1$, disjoint from the $(-1)$-curve $E\cap g^{-1}(Z_t)$.
Thus $\overline{g}(K)\cap Z_t$ is a line in $Z_t\cong\pr^2$, and this line contains the $d$ points $S_1\cap Z_t$. Since these points  are in general linear position (see Step \ref{points}), we conclude that $d=2$.
\end{proof}
\begin{step}\label{H0}
  The divisor $H_0$ is a tautological divisor for $Z=\pr_T(\ol(N)\oplus\ol\oplus\ol)$.
\end{step}
\begin{proof}
  Since $d=2$ by Step \ref{2}, for a general fiber $Z_t$ of $\ph$ the restriction $(H_0)_{|Z_t}$ is a line (see Step \ref{H}), therefore if $H$ is a tautological divisor, we have $H_0\sim H+\ph^*(G)$, $G$ a divisor on $T$.
 Restricting to $S_3$ we have $H_0\cap S_3=\emptyset$ by Step \ref{disjoint} and $\ol_Z(H)_{|S_3}\cong \ol_{S_3}$ by Step \ref{bundleZ}, thus $G\sim 0$ and $H_0\sim H$.
\end{proof}
\begin{step}\label{vanishing2}
We have $H^1(Z,H_0)=0$.
 \end{step}
 \begin{proof}
 By Step \ref{H0} the divisor $A:=\ph^*(-K_T)+H_0$ is the tautological divisor for $\pr_T(\ol(-K_T+N)\oplus\ol(-K_T)\oplus \ol(-K_T))$, hence it is ample by Th.~\ref{factorization}($\ref{F}$) and Step \ref{N}.

   Consider the divisors $H_0',H_0''\subset Z$ corresponding to the two projections $\ol_T(N)\oplus\ol_T\oplus\ol_T\twoheadrightarrow\ol_T(N)\oplus\ol_T$, such that $H_0'\cap D=S_2$ and $H_0''\cap D=S_3$ (see Step \ref{bundleZ}); we note that the divisor
 $H_0'+H_0''+D$
  is simple normal crossing in $Z$.

  By Step \ref{H0} and Rem.~\ref{relative}  we have $H_0'\sim H_0''\sim H_0$ and
$D \sim H_0-\ph^*(N)$, hence    $H_0'+H_0''+D\sim 3H_0-\ph^*(N)$.
 Moreover $-K_Z\sim\ph^*(-K_T-N)+3H_0$ and
  $$-K_Z+H_0\sim\ph^*(-K_T-N)+4H_0\sim A+H_0'+H_0''+D.$$
  By Norimatsu's Lemma \cite[Lemma 4.3.5]{lazI} we get
  $H^1(Z,H_0)=H^1(Z,K_Z-K_Z+H_0)=0$.
\end{proof}
\begin{step}
 $S_1$ is a complete intersection of $H_0$ and a divisor in $|2H_0|$.
\end{step}
\begin{proof}
  We note first of all that $H_0$ is a smooth $\pr^1$-bundle over $T$. Indeed, since $d=2$ by Step \ref{2}, 
  for every  $t\in T$  we have $\ol_Z(H_0)_{|Z_t}\cong\ol_{\pr^2}(1)$ (see Step \ref{H}). Note that no fiber $Z_t$ can be contained in $H_0$, because $H_0\cap S_2=\emptyset$ by Step \ref{disjoint}; therefore $(H_0)_{|Z_t}$ is a line for every $t\in T$, and $\ph_{|H_0}\colon H_0\to T$ is a $\pr^1$-bundle.

  Now we show that $\ol_{H_0}(S_1)\cong\ol_Z(2H_0)_{|H_0}$.
We consider  the divisor $H_0'$ as in the proof of Step \ref{vanishing2}, so that $H_0'\sim H_0$ and $H_0'\cap D=S_2$.
  
  We have that $D\cap H_0$ is $\{pt\}\times T$ in $D\cong\pr^1\times T$, hence $D\cap H_0\subset H_0$ is a section of $\ph$, disjoint from both $S_1$ and $H_0'$, because $D\cap S_1=\emptyset$ and $D\cap H_0'\cap H_0=S_2\cap H_0=\emptyset$ (see Steps \ref{disjoint} and \ref{D}).
Since $S_1$ has degree $2$ over $T$, in $H_0$ we have
$$S_1\sim_{H_0}2(H'_0)_{|H_0}+(\ph_{|H_0})^*(G),$$
where $G$ is a divisor in $T$. By restricting to the section $D\cap H_0$, we get $G\sim 0$ and $\ol_{H_0}(S_1)\cong\ol_Z(2H_0)_{|H_0}$.

Finally we consider the exact sequence in $Z$:
  $$0\la\ol_Z(H_0)\la\ol_Z(2H_0)\la\ol_Z(2H_0)_{|H_0}\la 0.$$
  By Step \ref{vanishing2} we have a surjection $H^0(Z,\ol_Z(2H_0))\twoheadrightarrow 
H^0(H_0,\ol_Z(2H_0)_{|H_0})$, and this yields the statement.
\end{proof}
\begin{step} \label{final}
  $X\to Z\stackrel{\ph}{\to} T$ is as in Construction B (Prop.~\ref{B}).
\end{step}
\begin{proof}
We just note that $-K_T\pm N$ is ample by Lemma \ref{fanoB}, and $\delta_T\leq 3$ by Lemma \ref{deltaB}.
\end{proof}
\section{Geometry of the del Pezzo fibration $\sigma\colon X\to T$}\label{sec6}
\noindent 
Let $X$ be a smooth Fano variety with $\delta_X=3$. By Th.~\ref{main} there are a smooth Fano variety $T$, and a morphism $\sigma\colon X\to T$, as in 
 Construction A or B; let us denote $X$ by $X_A$ in the former case, and by $X_B$ in the latter.

In this section we study the geometry of the fibration in del Pezzo surfaces $\sigma$, starting with the description of its fibers in \S\ref{properties}. Then, in \S\ref{relA} and \S\ref{relB}, we describe the 4-dimensional cone $\NE(\sigma)$ and the associated relative contractions, respectively for $X_A$ and $X_B$. In particular this allows to determine whether the same $\sigma\colon X\to T$ can be obtained with Construction A or B in different ways, or for different choices of divisors on $T$.

Finally in \S\ref{cond} 
we show that, if $X\not\cong F\times T$ ($F$ the blow-up of $\pr^2$ at 3 non-collinear points), then $T$ must satisfy some non-trivial condition.

We keep the same notation as in Sections \ref{first} and \ref{second}; in particular we recall that $\sigma$ factors as  $X \xrightarrow{h} Z \xrightarrow{\varphi} T$, where
${\varphi}$ is a $\pr^2$-bundle, and ${h}$ is the blow-up along 3 horizontal, pairwise disjoint, codimension 2 smooth subvarieties $S_1,S_2,S_3\subset Z$.  
\subsection{Fibers of $\sigma$}\label{properties}
Let $X$ be as above, and $X_t$ the fiber of $\sigma$ over $t\in T$.

For $X_A$, the morphism $\sigma$ is smooth, and $X_t$ is isomorphic to the blow-up of $\pr^2$ at non-collinear 3 points for every $t\in T$.

For $X_B$,  recall that 
$\varphi_{|S_1} \colon S_1 \to T$ is a finite cover of degree 2; let $\Delta\subset T$ be the branch divisor (note that $\Delta\neq\emptyset$).
Then $\sigma$ is smooth  over $T\smallsetminus\Delta$, where
$X_t$ is isomophic to the blow-up of $\pr^2$ at 4 points in general linear position. Instead, for $t\in\Delta$, $X_t$ is singular and isomorphic to the blow-up of $\pr^2$ at 3 non-collinear points, one of which is a double point contained in a line (see Rem.~\ref{rem}); thus $X_t$ is irreducible and has one rational double point of type $A_1$ (see for instance \cite[\S IV.2.3]{EH}).
\begin{remark}\label{toric}
  Let $X$ be a smooth Fano variety with $\delta_X=3$.
Then $X$ is toric if and only if  $X$ is obtained using Construction A from a toric Fano variety $T$.
  
Indeed, if $X$ is toric, then $\sigma\colon X\to T$ is smooth (so that $X=X_A$) and $T$ is toric.

Conversely, if we apply Construction A to a toric Fano variety $T$, then $Z$ is toric, and the three sections $S_1,S_2,S_3\subset Z$ are invariant for the torus action, so that $X$ is toric too.
\end{remark}
\subsection{ The cone $\NE(\sigma)$ and relative  contractions for $X_A$}\label{relA}
\begin{remark}\label{NE A}
  Consider $X=X_A$, let $X_{t}$ be a fiber of $\sigma$, and  $\iota\colon X_{t} \hookrightarrow X$  the inclusion.
 Since $\sigma$ is smooth, it follows from \cite[Prop.~1.3]{wisndef} that the pushforward $\iota_*\colon \mathcal N_1(X_{t}) \to \mathcal N_1(X)$ yields an isomorphism among $\N(X_{t})$ and $\ker\sigma_*$, and among the cones $\NE(X_{t})$ and $\NE(\sigma)$. Moreover every relative elementary contraction of $X/T$ restricts to an elementary contraction of $X_{t}$, and viceversa.

  Since $X_{t}$ is the blow-up of $\pr^2$ at 3 non-collinear points $p_1, p_2, p_3$, the cone $\NE(X_{t})$ is generated by the classes of the six $(-1)$-curves of $X_{t}$, given by the exceptional curve
  $e_i$ over $p_i$, and  the transform $e_i'$ of the line
  $\overline{p_jp_k}$.  In $X$ we have $e_i=E_i\cap X_{t}$ and $e_i'=E_i'\cap X_{t}$;   the exceptional divisors $E_i,E_i'$ are $\pr^1$-bundles over $T$.
  
  Figure \ref{figuraconoA} shows the 3-dimensional polytope obtained as a hyperplane section of the $4$-dimensional cone $\NE(\sigma)$.
\begin{figure}
\includegraphics[width=0.3\columnwidth]{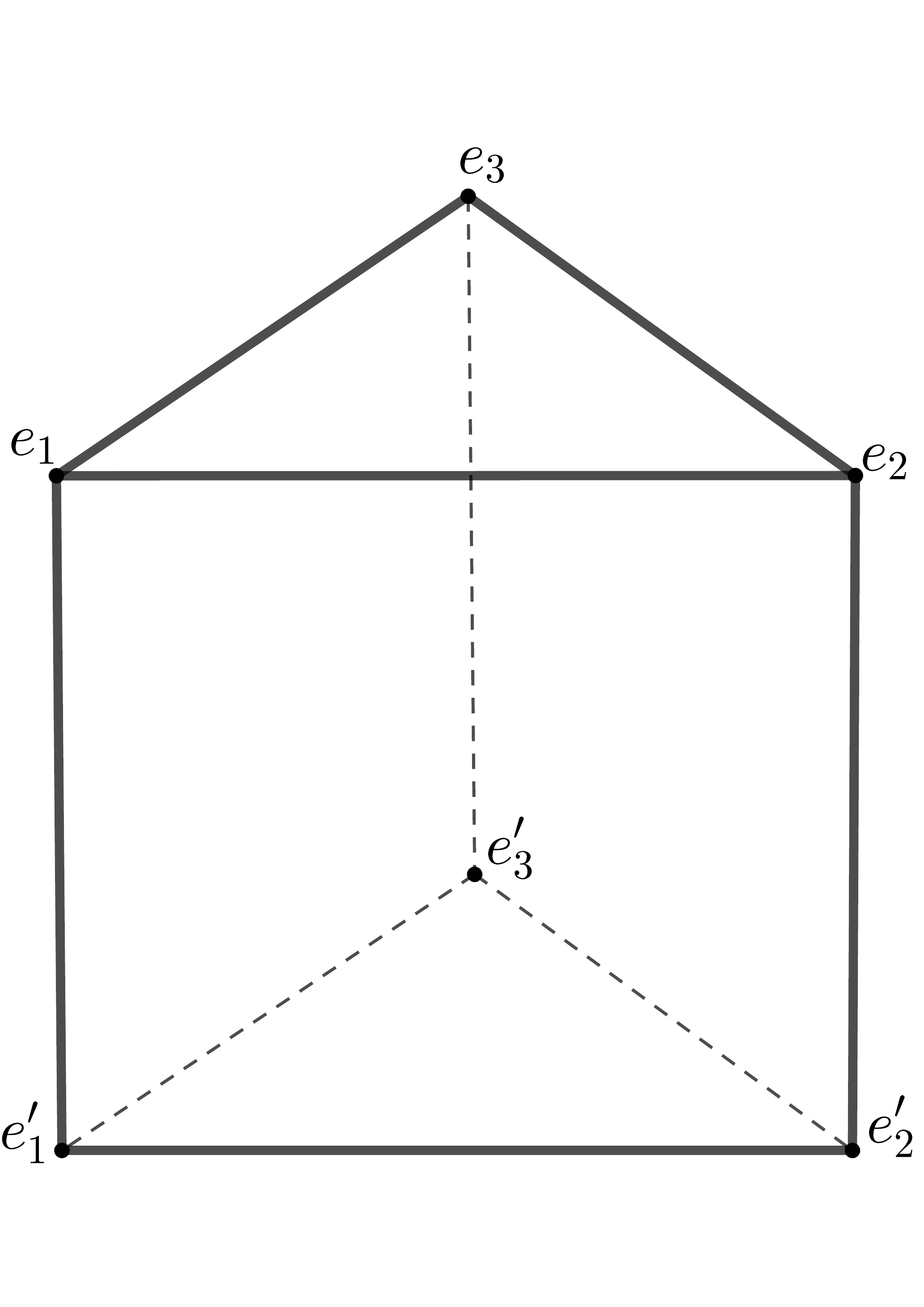}
\caption{A section of $\NE(\sigma)$ for $X_A$}\label{figuraconoA}
\end{figure}
\end{remark}
\begin{lemma}
\label{X_A}
Let $X=X_A$. Then there is a commutative diagram:
$$\xymatrix{ X\ar[r]^-{\wi{h}}\ar[d]_h\ar[dr]_{\sigma}& 
  {\wi Z=\pr_T(\ol(-D_1)\oplus\ol(-D_2)\oplus\ol(-D_3))}\ar[d]^{\wi{\ph}}\\
  Z\ar[r]_\ph&T}$$
where $\Exc(\wi{h})=E_1'\cup E_2'\cup E_3'$, so that performing
Construction A from $T$ with divisors $D_1,D_2,D_3$, or with divisors $-D_1,-D_2,-D_3$,
yields the same Fano variety $X$.
\end{lemma}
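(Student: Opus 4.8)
The geometric mechanism I would exploit is the hexagonal symmetry of the fibres. Each $X_t$ is the blow-up of $\pr^2$ at three non-collinear points, a del Pezzo surface of degree $6$ whose six $(-1)$-curves form a hexagon; the three exceptional curves $e_i=E_i\cap X_t$ and the three line-transforms $e_i'=E_i'\cap X_t$ are the two alternating triangles of pairwise disjoint $(-1)$-curves, and contracting either triangle maps $X_t$ to $\pr^2$. The morphism $h$ realises the first blow-down fibrewise, and the plan is to realise the second one globally as $\wi h$.

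First I would construct $\wi h$. By Rem.~\ref{NE A} the relative elementary contraction of $X_t$ blowing down $e_i'$ extends to a relative elementary divisorial contraction of $X/T$ contracting $E_i'$. Since $E_1',E_2',E_3'$ are pairwise disjoint (their pairwise intersections in $Z$ are the centres $S_l$, which are blown up by $h$), these three contractions can be composed into a single morphism $\wi h\colon X\to\wi Z$ over $T$ with $\Exc(\wi h)=E_1'\cup E_2'\cup E_3'$; each step is an isomorphism near the remaining two divisors, so they stay disjoint. Fibrewise $\wi h$ is the blow-down $X_t\to\pr^2$, hence $\wi\ph\colon\wi Z\to T$ is a smooth $\pr^2$-fibration, $\wi h$ is the blow-up of the three pairwise disjoint smooth sections $\wi S_i:=\wi h(E_i')$, and $\wi\ph\circ\wi h=\sigma$, which is the asserted commutative diagram.

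Next I would identify $\wi Z$. The sections $\wi S_i$ are pairwise disjoint and fibrewise the three non-collinear points obtained by contracting the triangle $e_1',e_2',e_3'$; hence, exactly as in Step \ref{U}, $\wi Z\cong\pr_T(\ol(\wi D_1)\oplus\ol(\wi D_2)\oplus\ol(\wi D_3))$ with each $\wi S_i$ corresponding to the projection onto $\ol(\wi D_i)$. To compute the $\wi D_i$ I would use that the exceptional divisor $E_i$ of $h$ is \emph{also} the transform under $\wi h$ of the divisor $\wi F_i\subset\wi Z$, because fibrewise $e_i$ maps to the line $\overline{\wi p_j\wi p_k}$. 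Applying Lemma \ref{E} in the $Z$-picture and Lemma \ref{E'} in the $\wi Z$-picture then yields two presentations of this single divisor,
\[
E_i\cong\pr_T\bigl(\textstyle\bigoplus_{j\neq i}\ol(-K_T+D_i-D_j)\bigr)\cong\pr_T\bigl(\textstyle\bigoplus_{j\neq i}\ol(-K_T+\wi D_j-\wi D_i)\bigr),
\]
and in both cases $(-K_X)_{|E_i}$ is the tautological bundle. Since the tautological bundle recovers the vector bundle by pushforward, the two rank-$2$ bundles are isomorphic; cancelling $\ol(-K_T)$ and comparing the line-bundle summands (using that $\Pic(T)$ is free and Krull--Schmidt holds) shows that $\wi D_i+D_i$ is independent of $i$, i.e.\ $\wi D_i\equiv-D_i$ up to a common twist. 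As projectivization is twist-invariant, $\wi Z\cong\pr_T(\ol(-D_1)\oplus\ol(-D_2)\oplus\ol(-D_3))$ with $\wi S_i$ the projection onto $\ol(-D_i)$. Thus $\wi h\colon X\to\wi Z$ is precisely Construction A applied to $T$ with divisors $-D_1,-D_2,-D_3$; the ampleness hypothesis ``$-K_T+D_i-D_j$ ample for all $i,j$'' is invariant under $(D_1,D_2,D_3)\mapsto(-D_1,-D_2,-D_3)$ combined with $i\leftrightarrow j$, so this is a legitimate input, and the two inputs produce the same $X$.

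The main obstacle is the bundle identification in the third paragraph: one must verify not only that $E_i$ carries the two projective-bundle structures above, but that their tautological bundles genuinely agree under the identification $E_i=\wi E_i'$ of subvarieties of $X$ — which is exactly why I rely on the fact, recorded in both Lemma \ref{E} and Lemma \ref{E'}, that each tautological bundle equals $(-K_X)_{|E_i}$ — and then argue that matching a projective bundle with its tautological (relatively ample) class pins down the vector bundle up to the unavoidable twist by a line bundle from $T$. By comparison, the construction of $\wi h$ is routine once Rem.~\ref{NE A} is available.
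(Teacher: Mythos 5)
Your proposal is correct and takes essentially the same route as the paper's proof: both obtain $\wi{h}$ by contracting the face of $\NE(\sigma)$ spanned by $[e_1'],[e_2'],[e_3']$ via Rem.~\ref{NE A}, identify $\wi{Z}$ as a split $\pr^2$-bundle as in Step \ref{U}, and pin down the twisting divisors by comparing the two projective-bundle presentations of the exceptional divisors through Lemmas \ref{E} and \ref{E'}, using that in both presentations the tautological class is the restriction of $-K_X$. The only (immaterial) difference is that the paper performs this comparison on the divisors $E_i'$ (exceptional for $\wi{h}$, transforms of $F_i$ under $h$), while you use the $E_i$ (exceptional for $h$, transforms of the $\wi{F}_i$ under $\wi{h}$).
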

\begin{proof}
  By Rem.~\ref{NE A} the face of $\NE(\sigma)$ spanned by the classes of $e_1',e_2', e_3'$ yields a  contraction $\wi{h}\colon X\to\wi{Z}$ such that
 $\widehat \ph\colon\wi{Z}\to T$ is a $\pr^2$-fibration, and $\wi{h}$ is the blow-up of three sections that are fibrewise in general linear position. Moreover by Lemmas \ref{E} and \ref{E'} we have $E_i\cong \pr_T(\ol(-D_j)\oplus\ol(-D_k))$ and  $E_i' \cong \pr_T(\ol(D_j)\oplus\ol(D_k))$ for every $i,j,k$ with $\{i,j,k\}=\{1,2,3\}$; this implies the statement.
\end{proof}
Observe that the  birational contractions $h$ and $\wi h$ correspond to the two simplicial facets of $\NE(\sigma)$, see Fig.~\ref{figuraconoA}. Instead the three non-simplicial facets yield conic bundles $X\to Y$, where $Y$ is a $\pr^1$-bundle over $T$.
\subsection{ The cone $\NE(\sigma)$ and relative  contractions for $X_B$}\label{relB}
\begin{lemma}
\label{relative cone}
Let  $X=X_B$, $X_{t_0}$ a smooth fiber of $\sigma$, and $\iota\colon X_{t_0}\hookrightarrow 
X$ the inclusion.
  Then every relative elementary contraction of $X/T$ restricts to a non-trivial contraction of $X_{t_0}$, and
$\iota_*\NE(X_{t_0})=\NE(\sigma)$.
\end{lemma}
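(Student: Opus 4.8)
The plan is to compare the relative cone $\NE(\sigma)$ with the cone $\NE(X_{t_0})$ of a smooth fiber via the pushforward $\iota_*$. The difficulty, relative to the analogous situation for $X_A$ treated in Rem.~\ref{NE A}, is that $\sigma$ is \emph{not} smooth for $X_B$: it has singular fibers over the branch divisor $\Delta\subset T$ described in \S\ref{properties}, so the clean argument of \cite{wisndef} (which needs a smooth morphism) does not apply directly. Hence I cannot simply transport the cone of $X_{t_0}$ isomorphically, and the two inclusions $\iota_*\NE(X_{t_0})\subseteq\NE(\sigma)$ must be established separately, with the reverse inclusion being the real content.

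First I would record the easy inclusion: any curve in $X_{t_0}$ is contracted by $\sigma$, so $\iota_*\NE(X_{t_0})\subseteq\NE(\sigma)$, and $\iota_*$ maps $\N(X_{t_0})$ into $\ker\sigma_*$. To control dimensions, note $X_{t_0}$ is the blow-up of $\pr^2$ at four general points (see \S\ref{properties}), so $\rho_{X_{t_0}}=5$, while $\NE(\sigma)$ spans $\ker\sigma_* = \langle [E_1],[E_2],[E_3],[e]\rangle$ where $e$ is a fiber of the $\pr^2$-bundle direction; this kernel has dimension $4$, matching $\rho_X-\rho_T$. The key point is that $\iota_*$ restricted to $\ker\sigma_*$-relevant classes is surjective: the four exceptional/ruling classes of $X_{t_0}$ (the transforms of the four exceptional divisors of the blow-up, together with a line class) map onto generators of $\ker\sigma_*$. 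I would identify these fiber classes explicitly as $E_i\cap X_{t_0}$, the transform $\w H_0\cap X_{t_0}$, etc., exactly as the generators $e_i,e_i'$ were identified in Rem.~\ref{NE A}.

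The heart of the argument is the reverse inclusion $\NE(\sigma)\subseteq\iota_*\NE(X_{t_0})$. For this I would show that every relative elementary contraction of $X/T$ restricts to a non-trivial contraction of the \emph{smooth} fiber $X_{t_0}$; this gives the first assertion of the lemma and, combined with the dimension count, forces equality of cones. The mechanism: each relative extremal ray $R$ of $X/T$ is generated by a curve lying in some fiber $X_t$; using the description of the divisors $E_1,E_2,E_3,\w H_0,\w D$ from Section \ref{second} and their restrictions to fibers (via Lemmas \ref{E} and \ref{E'}), I would check that the contraction of $R$ is non-trivial on the general fiber. Concretely, the relative contractions of $X_B$ come from blowing down the exceptional divisors and from the conic-bundle structures, and each such operation, read on $X_{t_0}$, contracts one of the finitely many $(-1)$-curves of the del Pezzo surface of degree $5$. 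Since a smooth del Pezzo fiber $X_{t_0}$ has $\NE(X_{t_0})$ generated by its finitely many $(-1)$-curves, and each relative extremal ray restricts to one of these, the restriction map is a bijection on extremal rays; combined with the already-established containment and the equality $\dim\ker\sigma_*=\rho_{X_{t_0}}-1$, I conclude $\iota_*\NE(X_{t_0})=\NE(\sigma)$.

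The main obstacle, and the step I expect to require the most care, is handling the non-smoothness of $\sigma$ along $\Delta$: I must guarantee that choosing $t_0$ \emph{general} (i.e. $t_0\notin\Delta$) suffices, namely that no relative extremal ray of $X/T$ lives only in curves sitting over $\Delta$. This follows because each relative contraction of $X_B$ is induced by a global divisor ($E_i$, $\w H_0$, $\w D$) whose restriction to the general fiber is non-trivial — the singular fibers are limits of the general ones and cannot produce \emph{extra} extremal rays for the Fano variety $X$, since $\NE(X)$ is polyhedral and the relative cone is cut out by the nef divisors $\sigma^*\Nef(T)$. I would phrase this last point using that $X$ is Fano (so $\NE(\sigma)$ is a finitely generated subcone spanned by classes of rational curves) and that every such generating curve degenerates into, or already lies in, the general fiber $X_{t_0}$.
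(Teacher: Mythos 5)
Your proposal correctly isolates the difficulty (extremal rays of $\NE(\sigma)$ that might be supported only over the branch divisor $\Delta$), but the way you resolve it has a genuine gap, indeed a circularity. You argue that "the relative contractions of $X_B$ come from blowing down the exceptional divisors and from the conic-bundle structures," i.e.\ you take for granted the explicit list of relative elementary contractions of $X/T$. But that list (Rem.~\ref{NE B}) is exactly what this lemma is needed to establish: a priori $\NE(\sigma)$ could have an extremal ray whose contraction is not visible on the divisors $E_i,\w{H}_0,\w{D}$, for instance a small contraction or a divisorial contraction whose exceptional locus sits entirely over $\Delta$. Your fallback argument does not close this gap: polyhedrality of $\NE(X)$ and the fact that $\NE(\sigma)$ is a face of $\NE(X)$ cut out by the pullback of an ample divisor from $T$ say nothing about where the curves generating an extremal ray live, and the assertion that such curves "degenerate into, or already lie in, the general fiber" is backwards --- degeneration goes from the general fiber to the special ones, and a class represented only over $\Delta$ need not lie in $\iota_*\N(X_{t_0})$. (Concrete warning: blow up $\pr^2\times\pr^1$ along a line contained in one fiber; the result is Fano, fibered in surfaces over $\pr^1$, and has a relative extremal ray supported over a single point of the base. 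What prevents this for $X_B$ is not Fano-ness or polyhedrality but the irreducibility of all fibers of $\sigma$.) Finally, the dimension count you propose cannot substitute for this step: a full-dimensional subcone of a full-dimensional cone need not equal it.

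The paper closes the gap with a non-circular mechanism that your proposal is missing. Let $R\subset\NE(\sigma)$ be an extremal ray with contraction $c_R\colon X\to X_R$; the fiber-type case is clear, so assume $c_R$ is birational. Every fiber $X_t$ of $\sigma$, including the singular ones, is irreducible by \S\ref{properties}, and $c_R(X_t)$ is the $2$-dimensional fiber of $X_R\to T$ over $t$ (as $\dim X_R=n$), so no $X_t$ is contained in $\Exc(c_R)$; since every fiber of $c_R$ lies in some $X_t$, all fibers of $c_R$ have dimension $\leq 1$. The fiber-locus inequality \cite[Thm.~1.2]{wisn} then forces $\dim\Exc(c_R)=n-1$, hence $\dim\sigma(\Exc(c_R))=n-2$, i.e.\ $\Exc(c_R)$ dominates $T$ and meets \emph{every} fiber, in particular $X_{t_0}$, in curves contracted by $c_R$. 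This is precisely the step that rules out rays supported only over $\Delta$; without it, or an equivalent argument, your proof does not go through.
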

\begin{proof}
  Clearly $\iota_*\NE(X_{t_0})\subseteq\NE(\sigma)$. For the converse,
let $R$ be an extremal ray of $\NE(\sigma)$, and
   $c_R\colon X\to X_R$ the associated elementary contraction of $X$. We show that $X_{t_0}$ must contain some curve contracted by $c_R$; this implies that $R$ is in the image of  $\NE(X_{t_0})$ via $\iota_*$, so that $\iota_*\NE(X_{t_0})=\NE(\sigma)$.

 The statement is clear if $c_R$ is of fiber type, so let us assume that $c_R$ is birational.
 Every fiber $X_{t}$ of $\sigma$ is irreducible (see \S\ref{properties}), and $c_R(X_{t})\subset X_R$ is the fiber of $X_R\to T$ over $t$. Since $\dim X_R=n$, we have $\dim c_R(X_t)=2$ and hence $X_t\not\subset\Exc(c_R)$.

 On the other hand every fiber of $c_R$ is contained in some $X_t$, and we conclude that every fiber of $c_R$ has dimension $\leq 1$.
 By \cite[Thm.~1.2]{wisn}  we have $\dim\Exc(c_R)= n-1$, therefore $\dim\sigma(\Exc(c_R))=n-2$, $\sigma(\Exc(c_R))=T$, and $\Exc(c_R)$ meets every fiber $X_t$.
\end{proof}
\begin{remark}
\label{NE B}
Let  $X=X_B$; we use Lemma \ref{relative cone} to describe the cone $\NE(\sigma)$.

Let $X_{t}$ be a smooth fiber of $\sigma$ and $\{p_1,p_1', p_2, p_3\} \in Z_{t}$ be the  points blown-up by $h_{|X_{t}}\colon X_{t}\to Z_{t}$, where $p_i= S_i \cap Z_{t}$ for $i=2,3$, and $\{p_1, p_1'\}=S_1 \cap Z_{t}$. The $5$-dimensional cone  $\NE(X_{t})$ is generated by the classes of the ten $(-1)$-curves in $X_{t}$, given by the exceptional curves and the transforms of the lines through two blown-up points. 
We denote by
$e_i$ (respectively $e_1'$) the exceptional curve over $p_i$ (respectively $p_1'$), and $\ell_{i,j}$
 (respectively $\ell_{1,1'}$, $\ell_{1',i}$ for $i=2,3$)
the transform of the line $\overline{p_ip_j}$
(respectively $\overline{p_1p_1'}$, $\overline{p_1'p_i}$ for $i=2,3$). 

Consider as above $\iota\colon X_{t}\hookrightarrow X$ and $\iota_*\colon\N(X_{t})\twoheadrightarrow\ker\sigma_*$.
The kernel of $\iota_*$ has dimension 1, and is generated by the numerical class in $\mathcal N_1(X_{t})$ of the 1-cycle $(e_1-e_1')$. By looking at the numerical relations in $\mathcal N_1(X_{t})$, we provide a complete description of the cone $\NE(\sigma)$: Figure \ref{figuraconoB} shows the $3$-dimensional polytope obtained as a hyperplane section of the $4$-dimensional cone $\NE(\sigma)$. We see that $\NE(\sigma)$
 has $7$ extremal rays, and the figure shows their generators.
\begin{figure}
\includegraphics[width=0.5\columnwidth]{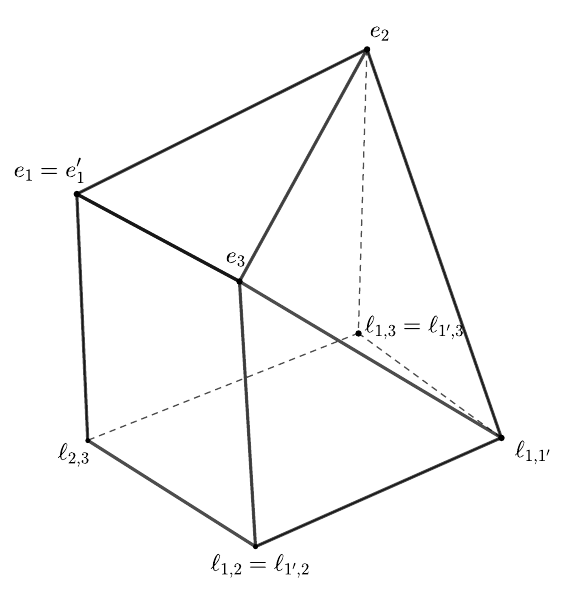}
\caption{A section of $\NE(\sigma)$ for $X_B$}\label{figuraconoB}
\end{figure}

It follows from \cite[Thm.~1.2]{wisn} that every relative elementary contraction of $\NE(\sigma)$ is the blow-up of a smooth variety along a smooth codimension 2 subvariety. The contraction corresponding to  $[e_1]=[e_1']$ (respectively $[e_2], [e_3]$) is the blow-down of $E_1$ (respectively $E_2, E_3$), while the contractions corresponding to $[\ell_{1,1'}]$ and $[\ell_{2,3}]$ have respectively exceptional divisors  $\widetilde H_0$ and $\widetilde D$ (the strict transforms of $H_0$ and $D$ from $Z$).
Lastly, for $i=2,3$ we denote by $G_i$ the exceptional divisor of the contraction corresponding to $[\ell_{1,i}]=[\ell_{1',i}]$.
\end{remark}
Observe that $\NE(\sigma)$ has 4 simplicial facets and 3 non-simplicial facets; $\NE(h)$ is the facet generated by $[e_1],[e_2],[e_3]$.
\begin{proposition}
\label{X_B}
Let $X=X_B$, and consider the facet $\langle [\ell_{1,2}],[\ell_{1,1'}],[e_3]\rangle$ (respectively $\langle [\ell_{1,3}],[\ell_{1,1'}],[e_2]\rangle$)  of $\NE(\sigma)$. The associated contraction $\wi h$ yields a commutative diagram: 
$$\xymatrix{ X\ar[r]^-{\wi{h}}\ar[d]_h\ar[dr]^{\sigma}& 
  {Z}\ar[d]^{\ph}\\
  Z\ar[r]_\ph&T}$$
 where $\Exc(\widehat h)=G_2\cup \widetilde H_0\cup E_3$ (respectively  $G_3\cup \widetilde H_0\cup E_2$),
and $X\stackrel{\wi h}{\to}Z\stackrel{\ph}{\to}T$ is as described in Construction B. 
\end{proposition}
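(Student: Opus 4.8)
The plan is to reproduce, for $X_B$, the mechanism of Lemma~\ref{X_A}: the chosen simplicial facet yields a second birational contraction $\wi h\colon X\to\wi Z$ which again exhibits $X$ as a blow-up of a $\pr^2$-bundle over $T$, of the type appearing in Construction B. First I would record, via Lemma~\ref{relative cone} and the description of $\NE(\sigma)$ in Rem.~\ref{NE B}, that the three classes $[\ell_{1,2}]=[\ell_{1',2}]$, $[\ell_{1,1'}]$, $[e_3]$ span one of the simplicial facets, and that each associated elementary contraction is the smooth blow-down of a smooth codimension $2$ center by \cite[Thm.~1.2]{wisn}, with exceptional divisors $G_2$, $\w{H}_0$, $E_3$ respectively. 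Since the facet is simplicial, the associated contraction $\wi h$ contracts exactly $G_2\cup\w{H}_0\cup E_3$ onto a smooth variety $\wi Z$, and $\sigma$ factors through a morphism $\wi{\ph}\colon\wi Z\to T$.

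Next I would pass to a general fiber. Over general $t\in T$ the map $\wi h_{|X_{t}}$ contracts the four $(-1)$-curves $\ell_{1,2},\ell_{1',2}$ (the two disjoint components of $G_2\cap X_{t}$), $\ell_{1,1'}$, and $e_3$; a direct intersection count shows these are pairwise disjoint, so contracting them gives a blow-down $X_{t}\to\pr^2$. Hence $\wi Z_t\cong\pr^2$ and $\wi{\ph}$ is a $\pr^2$-fibration. Moreover $E_3$ and $\w{H}_0$ are contracted to sections $\wi S_3,\wi S_2$ of $\wi{\ph}$ (one curve contracted per fiber), while $G_2$ is contracted to a subvariety $\wi S_1$ finite of degree $2$ over $T$ (one point in each $\wi Z_t$ for each of the two components of $G_2\cap X_{t}$). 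Thus $\wi h$ already has the numerical shape of Construction B.

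It then remains to identify $\wi Z$ with $Z$ and to check the Construction~B hypotheses, which I expect to be the main obstacle. Since $\wi{\ph}$ admits sections, $\wi Z\cong\pr_T(\wi{\ma{E}})$, and I would compute the conormal bundles of $\wi S_2,\wi S_3$ from the known structure of the contracted divisors $\w{H}_0\cong H_0\cong\pr_T(\ol(N)\oplus\ol)$ (Rem.~\ref{rem}) and $E_3\cong\pr_T(\ol(N)\oplus\ol)$ (Step~\ref{normal}), exactly as in Lemmas~\ref{E} and~\ref{E'}, so as to conclude $\wi Z\cong\pr_T(\ol(N)\oplus\ol\oplus\ol)=Z$ with $\wi S_2,\wi S_3$ corresponding to the trivial summands. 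Finally, arguing as in Step~\ref{vanishing2} and in the proof of Th.~\ref{main}, I would show that $\wi S_1$ is a complete intersection of general elements of $|\wi H|$ and $|2\wi H|$; as the conditions $N\not\equiv 0$ and $-K_T\pm N$ ample are intrinsic to $(T,N)$ (Step~\ref{N} and Lemma~\ref{fanoB}), the data $(\wi Z,\wi S_1,\wi S_2,\wi S_3)$ is of Construction~B type, as claimed, and the symmetric facet $\langle [\ell_{1,3}],[\ell_{1,1'}],[e_2]\rangle$ is handled by interchanging the roles of $S_2$ and $S_3$.
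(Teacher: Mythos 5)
Your overall strategy matches the paper's: contract the simplicial facet, note via Rem.~\ref{NE B} and \cite[Thm.~1.2]{wisn} that $\wi h$ is a composition of three pairwise disjoint smooth blow-downs onto a smooth $\wi Z$, identify the images of $E_3$, $\w{H}_0$, $G_2$, and then recognize $\wi Z$ as $Z=\pr_T(\ol(N)\oplus\ol\oplus\ol)$. But there is a genuine gap at the pivotal step: you deduce that $\wi\ph\colon\wi Z\to T$ is a $\pr^2$-bundle from the facts that its \emph{general} fiber is $\pr^2$ and that it admits sections. Neither fact, nor their combination, rules out degenerate special fibers. Your fiberwise analysis (contracting the four pairwise disjoint $(-1)$-curves $\ell_{1,2},\ell_{1',2},\ell_{1,1'},e_3$) is valid only over $t\notin\Delta$, where $X_t$ is smooth; over the branch divisor $\Delta$ the fiber $X_t$ is singular and $G_2\cap X_t$ degenerates, so a priori $\wi Z_t$ could be a singular degeneration of $\pr^2$, and a section does not prevent this (a fibration with a section need not be locally trivial). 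That this worry is real is shown by the paper's own Rem.~\ref{quadricbundle}: contracting the neighbouring facet $\langle[e_2],[e_3],[\ell_{1,1'}]\rangle$ produces a quadric bundle whose fibers do degenerate exactly over $\Delta$. So which facets of $\NE(\sigma)$ give genuine bundles must be proved, not read off from the general fiber.

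The paper closes this gap with an argument your proposal lacks: it sets $A:=\wi h(E_2)$, the image of an exceptional divisor \emph{not} contracted by $\wi h$, observes that $\ol_{\wi Z}(A)_{|\wi Z_t}\cong\ol_{\pr^2}(1)$ on a general fiber, uses that $\wi\ph$ is an elementary contraction (so $\NE(\wi\ph)$ is a ray) together with the relative Kleiman criterion to conclude that $A$ is $\wi\ph$-ample, hence $A':=A+m\,\wi\ph^{\,*}(M)$ is ample for $M$ ample on $T$ and $m\gg 0$; then \cite[Prop.~3.2.1]{beltrametti_sommese} (which requires equidimensionality plus an ample line bundle restricting to $\ol_{\pr^2}(1)$ on the general fiber) yields $\wi Z=\pr_T(\ma{G})$ for a rank $3$ vector bundle $\ma{G}$, uniformly over all of $T$ including $\Delta$. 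Once this is in place, your plan for identifying $\ma{G}$ is essentially the paper's: the conormal bundle of $\wi S_3=\wi h(E_3)$ is $\ol(N)\oplus\ol$, giving an exact sequence $0\to\ol(N)\oplus\ol\to\ma{G}\to\ol\to 0$; but note that you must also invoke the splitting of this sequence, which follows from $H^1(T,N)=H^1(T,\ol_T)=0$ (Kodaira vanishing, since $-K_T+N$ is ample, and $T$ Fano) --- the conormal computations in the spirit of Lemmas~\ref{E} and \ref{E'} alone do not pin down $\ma{G}$ as a direct sum.
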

\begin{proof}
  We consider the face $\langle [\ell_{1,2}],[\ell_{1,1'}],[e_3]\rangle$, the other case being analogous. Let $\wi h\colon X\to\wi{Z}$ be the associated contraction, and $\wi\ph\colon\wi{Z}\to T$ the map over $T$, so that $\wi\ph$ is an elementary contraction and $\sigma=\wi\ph\circ\wi h$ . 

  By Rem.~\ref{NE B}, $\wi Z$ is smooth and $\wi h$ is the blow-up of three pairwise disjoint smooth, codimension 2, irreducible subvarieties, with  $\Exc(\widehat h)= G_2\cup\widetilde H_0\cup E_3$.

  We show that $\widehat \ph\colon \wi Z\to T$ is a $\pr^2$-bundle. First note that all the fibers $\widehat Z_t$ of $\widehat \varphi$ are surfaces, and the general one is isomorphic to $\pr^2$.
  Let $A:= \widehat h(E_2)$, and  $\widehat Z_t$ be general, so that $\ol_{\widehat Z}(A)_{|\widehat Z_t} \cong \ol_{\pr^2}(1)$ and $A \cdot \ell =1$ for a line $\ell\subset \wi Z_t$.
  Since $\NE(\widehat \varphi)$ has dimension 1,  the relative Kleiman's criterion  implies that $A$ is $\widehat \varphi$-ample, thus there exists an ample divisor $M$ on $T$ such that $A':= A + m \widehat \varphi^{\,*}(M)$ is ample on $\widehat Z$ for $m \gg 0$. Note that $\ol_{\widehat Z}(A')_{|\widehat Z_t} \cong \ol_{\pr^2}(1)$, so we apply \cite[Prop.~3.2.1]{beltrametti_sommese} and deduce that $\widehat Z=\pr_T(\ma{G})$ for some rank 3 vector bundle $\ma{G}$ on $T$.
  
 By construction $\widehat S_3:=\widehat h(E_3)$ is a section of $\widehat \varphi$, and moreover $\ma{N}_{\widehat S_3/{\widehat Z}}^{\vee} \cong\ma{N}_{S_3/Z}^{\vee} \cong \ol(N) \oplus \ol$. Up to tensoring with some line bundle on $T$, we may assume that the section $\widehat S_3$ gives an exact sequence on $T$:
$$0 \la \ol(N) \oplus \ol \la \ma{G} \la \ol \la 0.$$
Since $-K_T+N$ is ample (see Lemma \ref{fanoB}), we have $h^1(T,N)=0$ and the above sequence splits, hence $\wi{Z}\cong\pr_T(\ol(N)\oplus\ol\oplus\ol)=Z$.

Finally it is not difficult to see that $\widehat  h(\widetilde H_0)$ is  a section of $\varphi$ corresponding to the projection onto $\ol_T$, and that $\widehat h(G_2)$ is a complete intersection of elements in $|H|$ and $|2H|$.
\end{proof}
\begin{remark}
\label{quadricbundle}
Let $X=X_B$, and consider the facet $\langle [e_2],[e_3],[\ell_{1,1'}]\rangle$  of $\NE(\sigma)$ (see Fig.~\ref{figuraconoB}). The associated contraction $h'$ yields a commutative  diagram
$$\xymatrix{ X\ar[r]^-{h'}\ar[d]_h\ar[dr]_{\sigma}& 
  {W}\ar[d]^{\alpha}\\
  Z\ar[r]_\ph&T}$$
where $\Exc(h')= E_2\cup E_3\cup \w H_0$, $W$ is smooth,
and $\alpha\colon W\to T$ is a quadric bundle, singular over $\Delta\subset T$.

Instead the three non-simplicial facets of $\NE(\sigma)$ yield conic bundles $X\to Y$, where $Y$ is a $\pr^1$-bundle over $T$.

We also note that, among the seven exceptional divisors associated to the extremal rays of $\NE(\sigma)$, $E_2$, $E_3$, $\w{H}_0$, and $\w{D}$ are $\pr^1$-bundles over $T$, while  $E_1$, $G_2$, and $G_3$ are $\pr^1$-bundles 
over  $S_1$, the double cover of $T$ ramified along $\Delta$.
\end{remark}
\subsection{Conditions on $T$}\label{cond}
Given a smooth Fano variety $T$, Constructions A and B give algorithms to construct from $T$ a smooth Fano variety $X$ with $\dim X=\dim T+2$ and $\rho_X=\rho_T+4$. For every $T$ one can get $X_A=F\times T$, where $F$   is the blow-up of $\pr^2$ at 3 non-collinear points, and $\sigma\colon X\to T$ the projection; this corresponds to the choice $D_1\sim D_2\sim D_3$.
In order to  
get an $X$ different from $F\times T$, the variety $T$ must satisfy some conditions, as follows.
\begin{lemma}\label{conditions}
  Let $T$ be a smooth Fano variety.
  Suppose that there exists
  a Fano variety $X$ obtained from $T$ as in Construction A or B, and such that $\sigma\colon X\to T$ is not isomorphic to the projection $F\times T\to T$.

  Then  there exists a hyperplane $\Lambda\subset\N(T)$ containing all the classes $[C]$ where $C\subset T$ is a curve with $-K_T\cdot C=1$.

  In particular, every extremal ray of length 1 of $\NE(T)$ is contained in $\Lambda$, and $\NE(T)$ has at least one extremal ray $R$ with length $\ell(R)\geq 2$.
\end{lemma}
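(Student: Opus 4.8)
The plan is to exhibit $\Lambda$ explicitly as the orthogonal $D^{\perp}\subset\N(T)$ of a suitable nonzero divisor $D$ coming from the construction, and to extract the numerical condition $D\cdot C=0$ on curves of anticanonical degree $1$ directly from the two ampleness hypotheses built into Constructions A and B. I would split the argument into the cases $X=X_A$ and $X=X_B$.

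For Construction A I first translate the hypothesis that $\sigma$ is not the projection $F\times T\to T$ into a statement about the $D_i$. Recall that $X\cong F\times T$ with $\sigma$ the projection precisely when $D_1\sim D_2\sim D_3$; since $T$ is Fano, $\Pic(T)$ is free and numerical equivalence agrees with linear equivalence, so if $\sigma$ is not the projection there are indices $i,j$ with $D_i\not\equiv D_j$. Set $D:=D_i-D_j\not\equiv 0$. By the hypothesis of Prop.~\ref{A}, both $-K_T+D$ and $-K_T-D$ are ample. For any curve $C\subset T$ with $-K_T\cdot C=1$, ampleness gives $1+D\cdot C>0$ and $1-D\cdot C>0$; as $D\cdot C\in\Z$, this forces $D\cdot C=0$. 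Hence all such classes lie in $\Lambda:=D^{\perp}$, a hyperplane because $D\not\equiv 0$.

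For Construction B the hypothesis is automatic, since the fibration $\sigma$ is never smooth (the branch divisor $\Delta$ is nonempty, see \S\ref{properties}), so $\sigma$ is never the projection; here the relevant divisor is $N$ itself. By construction $N\not\equiv 0$, and $-K_T\pm N$ is ample by Lemma~\ref{fanoB}. The same integer inequalities yield $N\cdot C=0$ for every $C$ with $-K_T\cdot C=1$, so $\Lambda:=N^{\perp}$ works.

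For the final two assertions: an extremal ray $R$ of $\NE(T)$ with $\ell(R)=1$ is generated by the class of a rational curve $C$ with $-K_T\cdot C=1$, which lies in $\Lambda$ by the above, whence $R\subset\Lambda$. If every extremal ray of $\NE(T)$ had length $1$, all of them would lie in $\Lambda$; since $T$ is Fano its cone $\NE(T)$ is polyhedral and is generated by its extremal rays, which in turn span $\N(T)$ as a vector space, so we would obtain $\N(T)\subseteq\Lambda$, contradicting that $\Lambda$ is a proper hyperplane. Therefore $\NE(T)$ has an extremal ray of length $\geq 2$. The only genuinely delicate point is the first reduction in Construction A, namely identifying exactly when $\sigma$ is the projection and thereby producing a pair with $D_i\not\equiv D_j$; once the divisor $D$ (respectively $N$) is in hand, the remaining arithmetic is immediate.
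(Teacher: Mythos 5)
Your proposal is correct and follows essentially the same route as the paper: in Construction A the hypothesis of Prop.~\ref{A} forces $(D_i-D_j)\cdot C=0$ for every curve $C$ with $-K_T\cdot C=1$, and excluding $D_1\sim D_2\sim D_3$ (the product case) yields a nonzero class $D_i-D_j$ whose orthogonal is the hyperplane $\Lambda$; in Construction B one takes $\Lambda=N^{\perp}$ using $N\not\equiv 0$ and the ampleness of $-K_T\pm N$. The only difference is that you spell out the concluding ``in particular'' assertions and the identification of the product case, which the paper leaves implicit.
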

\begin{proof}
Let $C\subset T$ be a curve with $-K_T\cdot C=1$. 
  Suppose first that $X=X_A$.  
By Lemma \ref{fanoA} for every $i,j=1,2,3$ we have $1\leq (-K_T+D_i-D_j)\cdot C=1+(D_i-D_j)\cdot C$, so that $D_i\cdot C= D_j\cdot C$.
We exclude by assumption the case $D_1\sim D_2\sim D_3$, hence
for some $i\neq j$ we must have $D_i\not\sim D_j$; set $\Lambda:= (D_i-D_j)^{\perp}$, so that $\Lambda$ is a hyperplane in $\N(T)$, and contains $[C]$.

The case where $X=X_B$ is similar, we set $\Lambda:=N^{\perp}$.
\end{proof}
In the next section we apply these conditions to the case where $T$ is a del Pezzo surface.
\section{The case of dimension $4$}\label{4folds}
\noindent Let $X$ be a Fano $4$-fold with $\delta_X=3$. 
By Th.~\ref{main}, $X$ is obtained with Construction A or B  from a del Pezzo surface $T$ with $\rho_X-\rho_T=4$ and $\delta_T\leq 3$. Since $T$ is a surface, it is easy to see that $\delta_T=\rho_T-1$, thus $\rho_T\in\{1,2,3,4\}$ and $\rho_X\in\{5,6,7,8\}$.

If $\rho_T\geq 3$, then $\NE(T)$ is generated by classes of $(-1)$-curves, in particular there are no extremal rays of length $>1$. Then it follows from Lemma \ref{conditions}  that $X\cong F\times T$, where $F$ is the blow-up of $\pr^2$ in $3$ non-collinear points.

If instead $\rho_T=1$, then $\rho_X=5$; this case is completely classified in \cite[Th.~1.1]{delta3_4folds}, and there are $6$ families.

Finally suppose that $\rho_T=2$, so that $\rho_X=6$. If $X$ is toric,
after the classification of toric Fano $4$-folds by Batyrev \cite{bat2,sato}, we see that $X$ has combinatorial type $U$ (in the notation of \cite{bat2}), and there are $8$ possibilities for $X$. More precisely, we have $T\cong \mathbb{F}_1$ in the two cases $U_2$ and $U_4\cong F\times \mathbb{F}_1$, and $T\cong\pr^1\times\pr^1$ in the remaining six cases, including $U_5\cong  F\times \pr^1\times\pr^1$.

In the non-toric case we get the following result, that together with the previous discussion implies Prop.~\ref{classification}.
\begin{proposition}\label{newfamilies}
  There are
  three families of
 non-toric Fano $4$-folds $X$ with $\delta_X=3$ and $\rho_X=6$.
\end{proposition}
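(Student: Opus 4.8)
The plan is to complement the toric discussion preceding the statement by treating the non-toric case directly. By Th.~\ref{main} and that discussion, such an $X$ is obtained with Construction A or B from a del Pezzo surface $T$ with $\rho_T=2$, so that $T\cong\mathbb{F}_1$ or $T\cong\pr^1\times\pr^1$; both are toric. By Rem.~\ref{toric} Construction A applied to a toric $T$ always yields a toric $X$, so every non-toric $X$ must come from Construction B; conversely every $X_B$ has a non-smooth del Pezzo fibration (see \S\ref{properties}) and is therefore non-toric, again by Rem.~\ref{toric}. Thus the problem reduces to enumerating the admissible Construction B data $(T,N)$ up to $\Aut(T)$ and the reparametrizations of a fixed $X_B$ described in \S\ref{relB}.

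Next I would record the constraints on $N$. By Lemma \ref{fanoB} we need $N\not\equiv 0$ with $-K_T\pm N$ both ample, and for the branched double cover $S_1$ to exist we need $2N$ effective, i.e.\ $h^0(T,2N)>0$ (see Rem.~\ref{doublecover}). These are linear conditions on the class of $N$ in $\Pic(T)\cong\Z^2$, and a short case analysis resolves them. On $\mathbb{F}_1$, writing $N\sim aC_0+bf$ with $C_0$ the $(-1)$-section and $f$ a fiber, ampleness of $-K_T\pm N$ forces $b=a$ and $|a|\le 1$, while $N\not\equiv 0$ and effectivity of $2N$ leave only $N\sim C_0+f$. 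On $\pr^1\times\pr^1$, writing $N\sim af_1+bf_2$, ampleness gives $|a|,|b|\le 1$, and $N\not\equiv 0$ with $2N$ effective leaves $(a,b)\in\{(1,0),(0,1),(1,1)\}$, where the exchange of the two rulings identifies $(1,0)$ with $(0,1)$.

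This produces three admissible data: $(\mathbb{F}_1,\,C_0+f)$, $(\pr^1\times\pr^1,\,f_1)$ and $(\pr^1\times\pr^1,\,f_1+f_2)$. In each case $S_1$ is a double cover of $T$ branched over a general member of $|2N|$; since $|2N|$ is base-point-free in all three cases, this branch divisor is smooth by Bertini, so $S_1$ is a smooth complete intersection of general elements of $|H|$ and $|2H|$ and the construction genuinely applies. By Prop.~\ref{B} each datum gives a Fano $4$-fold $X_B$ with $\delta_X=3$ and $\rho_X=6$.

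Finally one checks that the three families are distinct and that there are no others. Exhaustion is immediate from the first paragraph. For distinctness the most transparent invariant is the branch divisor $\Delta\sim 2N$, over which $\sigma$ degenerates: in the three cases $\Delta$ is a smooth irreducible rational curve, a pair of disjoint rational fibers, and a smooth elliptic curve respectively, which already shows the three degeneration patterns are non-isomorphic; the full invariants are tabulated in Section \ref{4folds}. I expect this last point to be the main obstacle: a priori $X$ might admit more than one del Pezzo fibration, so one must argue that the data $(T,N)$, or some numerical invariant extracted from it, is intrinsic to $X$ and not merely an artifact of the chosen $\sigma$. Once this is settled, the enumeration forces exactly three non-toric families.
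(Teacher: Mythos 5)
Your route is the same as the paper's: reduce via Th.~\ref{main} and Rem.~\ref{toric} to Construction B over $T\cong\pr^1\times\pr^1$ or $T\cong\mathbb{F}_1$, enumerate the admissible $N$, and check that each datum genuinely produces a Fano $4$-fold. Your enumeration is correct and coincides with the paper's three choices (note that $C_0+f$ is precisely the pullback of a line under $\mathbb{F}_1\to\pr^2$). Where you deviate is the smoothness of $S_1$: the paper observes that $N$ nef makes the tautological divisor $H$ nef on $Z$, hence globally generated because $Z$ is toric, and applies Bertini directly to $|H|$ and $|2H|$. Your argument through the double cover can be made to work, but the assertion that $S_1$ is ``branched over a general member of $|2N|$'' is not automatic: the branch divisor is the discriminant of a quadratic form, so you need the map $H^0(T,2N)\oplus H^0(T,N)\oplus H^0(T,\ol_T)\to H^0(T,2N)$, $(a,b,c)\mapsto b^2-4ac$, to be dominant (it is: fix $c$ a nonzero constant and $b=0$), or else start from a general $\Delta\in|2N|$, embed its cyclic double cover in $H_0$ as a member of $|2H_{|H_0}|$, and invoke openness of smoothness to pass to the general complete intersection. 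This is a small but real missing step.

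The more substantial loose end is the one you flagged yourself: distinctness of the three families. Your ``most transparent invariant'', the degeneration locus of $\sigma$, is an invariant of the pair $(X,\sigma)$, not of $X$; the paper proves uniqueness of the fibration only for $\rho_X=5$ (Prop.~\ref{rho5}), and no such statement is available at $\rho_X=6$, so this argument cannot be used as it stands. The correct resolution — which is how the paper settles it, and which your fallback gestures at — requires no uniqueness at all: one computes deformation invariants of $X$ itself for each of the three constructions, e.g.\ $K_X^4=224,\,222,\,223$ and $h^{2,2}=10,\,14,\,12$ (Table~\ref{table}), obtained by standard blow-up formulas; these pairwise separate the families. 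In a blind write-up these computations must actually be carried out: pointing to the table in Section~\ref{4folds} is pointing to the statement being proved.
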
 
  \begin{proof}
    By Th.~\ref{main} and Rem.~\ref{toric}, $X$ is obtained  with Construction B  from a del Pezzo surface $T$ with $\rho_T=\rho_X-4=2$, namely $T\cong\pr^1\times\pr^1$ or $T\cong\mathbb{F}_1$. The divisor $N$ on $T$ is such that the class of $N$ is effective and non-zero, and $-K_T\pm N$ is ample.

      It is not difficult to see that there are three choices of $N$ satisfying these conditions:
 $N\in|\ol_{\pr^1\times\pr^1}(0,1)|$,   $N\in|\ol_{\pr^1\times\pr^1}(1,1)|$, and $N$ the pullback of a general line in $\pr^2$ in the case of $\mathbb{F}_1$.

In all cases $N$ is nef, hence the tautological divisor $H$ of $Z=\pr_T(\ol(N)\oplus\ol\oplus\ol)$ is nef. On the other hand $Z$ is toric, thus $H$ is globally generated: we conclude that the general complete intersection of elements in $|H|$ and $|2H|$ is smooth, and we can apply Construction B.
  Finally we get
   three families of Fano $4$-folds, respectively $X_{B_0}$, $X_{B_1}$, and $X_{B_2}$.
 \end{proof}
 \noindent   {\bf Description of $X_{B_0}$.}
   We have $X_{B_0}\cong\pr^1\times Y$, where $Y$ is the Fano $3$-fold obtained by blowing-up $\pr^3$ along a line, a conic disjoint from the line, and two non-trivial fibers of the blow-up of the line. In fact $Y$ is obtained  with Construction B  from $T_Y=\pr^1$ with $N_Y\in|\ol_{\pr^1}(1)|$.

\medskip
 
 \noindent   {\bf Description of $X_{B_1}$.}
  Set $Z:=\pr_{\pr^1\times \pr^1}(\ol(1,1)\oplus\ol^{\oplus 2})\stackrel{\ph}{\to}\pr^1\times \pr^1$; the $4$-fold $X_{B_1}$ is the blow-up of $Z$ along three pairwise disjoint smooth surfaces $S_1,S_2,S_3$. The surfaces $S_2$ and $S_3$ are sections of $\ph$,  they are isomorphic to $\pr^1\times\pr^1$ and have normal bundle $\ol(-1,-1)\oplus\ol$. The blow-up $X_1$ of $Z$ along $S_2$ and $S_3$ is a toric Fano $4$-fold of combinatorial type $Q_{1}$, following \cite{bat2}.

  On the other hand the surface $S_1$ is a double cover of $\pr^1\times\pr^1$
  with $-K_{S_1}=\ph^*\ol_{\pr^1\times \pr^1}(1,1)_{|S_1}$ (see Rem.~\ref{doublecover}), so that $S_1$ is a del Pezzo surface of degree 4. 

\medskip

\noindent {\bf Description of $X_{B_2}$.}
Here $T=\mathbb{F}_1$ and  $N$ is the pullback of a general line in $\pr^2$. Then $Z$ is a $\pr^2$-bundle over $\mathbb{F}_1$, and $S_2,S_3$ are two sections; the blow-up $X_1$ of $Z$ along $S_2$ and $S_3$ is a toric Fano $4$-fold of combinatorial type $Q_2$, following \cite{bat2}. Moreover $\ph_{|S_1}\colon S_1\to\mathbb{F}_1$ is a double cover with $-K_{S_1}=\ph_{|S_1}^*(2N-E)$, where $E\subset\mathbb{F}_1$ is the exceptional curve, so that $S_1$ is a del Pezzo surface of degree $6$, the blow-up of $\pr^2$ at $3$ points.

\medskip

We give in Table \ref{table} some numerical invariants of the Fano $4$-folds 
$X_{B_0}$, $X_{B_1}$, and $X_{B_2}$;
they are computed using standard methods, see \cite[Lemmas 3.2 and 3.3]{delta3_4folds}. In the last column $T$ denotes the tangent bundle.
\begin{table}[!htbp]\caption{Numerical invariants}\label{table}
$$\begin{array}{|c|c|c|c|c|c|c|c||}
\hline\hline
\rule{0pt}{2.5ex}  \text{$4$-fold}   & b_3 & h^{2,2} & h^{1,3} & K^4 & K^2\cdot c_2 & h^0(-K) & \chi({T}) \\
\hline\hline

    X_{B_0} &  0 & 10 & 0& 224 & 152 & 51 & 4 \\

\hline
    
X_{B_1} &  0 & 14 & 0 &222 & 156 & 51 &  -2 \\

\hline

X_{B_2} & 0 & 12 & 0 & 223 &  154 & 51 &  1\\

\hline\hline
  \end{array}$$
\end{table}

\medskip

Finally we apply our results to the study of conic bundles on Fano $4$-folds.

Let $X$ be a Fano $4$-fold and 
$\eta\colon X\to Y$ a conic bundle such that $\rho_X-\rho_Y\geq 3$. Let us denote by $\bigtriangleup:=\{y\in Y| \eta^{-1}(y) \text{  is singular}\}$ the discriminant divisor.

If $X\cong S\times T$ is a product of two del Pezzo surfaces,  
then it follows easily 
that
$Y\cong \pr^1\times T$
and $\eta$ is induced by a conic bundle $S\to\pr^1$ (see for instance \cite[Lemma 2.10]{eleonora}); in particular all the connected components of $\bigtriangleup$ are isomorphic to $T$. 

Let us assume that $X$ is not a product of surfaces. Then we have 
$\rho_X-\rho_Y=\delta_X=3$
by \cite[Th.~4.2(1)]{eleonora} and \cite[Th.~1.1]{M_R}, and $\rho_X\in\{5,6\}$ by  Prop.~\ref{classification}, so that the possible $X$ are classified.
The case where $\rho_X=5$ has been studied in \cite[Cor.~2.18]{delta3_4folds}.
 As an application of our results, we describe
 the case $\rho_X=6$.
\begin{corollary} \label{cor:conic_bundles} Let $\eta\colon X\to Y$ be a conic bundle, where $X$ is a Fano $4$-fold with $\rho_X=6$, and $\rho_X-\rho_Y=3$. Let $\bigtriangleup$ be the discriminant divisor. Then one of the following hold:
\begin{enumerate}[$(i)$]
\item
$\bigtriangleup\cong \mathbb{F}_1\sqcup\mathbb{F}_1$ and either $X\cong U_2$ or $X\cong U_4$;
\item
  $\bigtriangleup\cong \pr^1\times\pr^1 \sqcup  \pr^1\times \pr^1$ and $X$ is isomorphic to one of the following varieties: $U_1$, $U_3$, $U_5$, $U_6$, $U_7$, $U_8$,
  $X_{B_0}$;
\item 
$\bigtriangleup\cong  \pr^1\times \pr^1\sqcup S_1$ with $S_1$ a del Pezzo surface of degree $4$ and $X\cong X_{B_1}$;
\item 
$\bigtriangleup\cong \mathbb{F}_1\sqcup S_1$ with $S_1$ a del Pezzo surface of degree $6$ and $X\cong X_{B_2}$. 
\end{enumerate}
\end{corollary}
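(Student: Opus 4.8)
The plan is to combine the classification of Fano $4$-folds with $\delta_X=3$ and $\rho_X=6$ (the toric cases $U_1,\dotsc,U_8$ together with the three non-toric families $X_{B_0},X_{B_1},X_{B_2}$ from Prop.~\ref{newfamilies}) with a direct analysis of the discriminant locus of a conic bundle. The key observation is that for a conic bundle $\eta\colon X\to Y$ with $\rho_X-\rho_Y=3$ on such an $X$, the general structure theorem forces $X$ to be one of these eleven varieties, so I only need to determine $\bigtriangleup$ for each. I would first recall that by \cite[Th.~4.2(1)]{eleonora} and \cite[Th.~1.1]{M_R}, the hypothesis $\rho_X-\rho_Y=3$ on a non-product $X$ is equivalent to $\delta_X=3$, so $X$ is indeed covered by Prop.~\ref{classification}; and if $X$ were a product of surfaces, $\bigtriangleup$ would have components isomorphic to $T$, which is not of the listed type for $\rho_X=6$, so this case is excluded.

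Next I would identify $Y$ and the conic bundle $\eta$ intrinsically using the geometry developed in Section~\ref{sec6}. For each of the eleven varieties, the morphism $\sigma\colon X\to T$ to the del Pezzo surface $T$ (with $\rho_T=2$) is the del Pezzo fibration of the structure theorem, and the conic bundles $X\to Y$ with $\rho_X-\rho_Y=3$ arise, as noted in Rem.~\ref{NE B} and Rem.~\ref{quadricbundle}, from the three non-simplicial facets of $\NE(\sigma)$; in each case $Y$ is a $\pr^1$-bundle over $T$. The discriminant divisor $\bigtriangleup\subset Y$ is then computed by restricting $\eta$ to the fibers of $Y\to T$ and tracking where the conic degenerates. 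The crucial point is that the singular fibers of $\eta$ lie over the locus swept out by the exceptional divisors of the relative contractions, and the connected components of $\bigtriangleup$ correspond to the images in $Y$ of these divisors.

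For the toric cases $U_1,\dotsc,U_8$, the discriminant is read off directly from the fan, or equivalently from Batyrev's classification \cite{bat2,sato}: one checks that each toric $U_i$ has $\bigtriangleup$ equal to two disjoint copies of $T$, which is $\mathbb{F}_1\sqcup\mathbb{F}_1$ for $U_2,U_4$ (where $T\cong\mathbb{F}_1$) and $\pr^1\times\pr^1\sqcup\pr^1\times\pr^1$ for the remaining six (where $T\cong\pr^1\times\pr^1$). For the non-toric cases I would use the explicit descriptions following Prop.~\ref{newfamilies}. For $X_{B_0}\cong\pr^1\times Y'$ the conic bundle factors through the product structure and $\bigtriangleup\cong\pr^1\times\pr^1\sqcup\pr^1\times\pr^1$, landing it with the second family. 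For $X_{B_1}$ and $X_{B_2}$, the key feature is that $S_1$ is no longer a section but a double cover $\ph_{|S_1}\colon S_1\to T$ of degree $2$ (Rem.~\ref{doublecover}); consequently one component of $\bigtriangleup$ is $S_1$ itself rather than a copy of $T$. Using the computations of Rem.~\ref{doublecover}, $S_1$ is a del Pezzo surface of degree $4$ for $X_{B_1}$ (branch divisor $2N\sim\ol_{\pr^1\times\pr^1}(2,2)$) and of degree $6$ for $X_{B_2}$, giving cases $(iii)$ and $(iv)$; the other component is the section-type divisor, namely $\pr^1\times\pr^1$ or $\mathbb{F}_1$ respectively.

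The main obstacle will be the precise identification of the discriminant components as abstract surfaces, and in particular verifying that in the non-toric families $X_{B_1},X_{B_2}$ the singular-fiber locus of $\eta$ genuinely splits into one section-type component and one copy of the double cover $S_1$, with the correct degrees. This requires a careful local analysis of how the quadric/conic degenerates over the branch divisor $\Delta\subset T$: away from $\Delta$ the fibers of $\sigma$ are smooth del Pezzo surfaces blown up at four points in general position, while over $\Delta$ the fiber acquires an $A_1$ singularity (see \S\ref{properties}), and I must check that this is exactly what produces the $S_1$-component of $\bigtriangleup$ and computes its degree via $-K_{S_1}\sim\ph_{|S_1}^*(N)$ in the $X_{B_1}$ case and $-K_{S_1}\sim\ph_{|S_1}^*(2N-E)$ in the $X_{B_2}$ case.
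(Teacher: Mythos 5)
There is a genuine gap at the heart of your reduction. You claim, citing Rem.~\ref{NE B} and Rem.~\ref{quadricbundle}, that every conic bundle $\eta\colon X\to Y$ with $\rho_X-\rho_Y=3$ ``arises from the three non-simplicial facets of $\NE(\sigma)$.'' Those remarks prove only the converse: each non-simplicial facet of $\NE(\sigma)$ \emph{yields} a conic bundle over $T$. Nothing in the paper gives you, for $\rho_X=6$, that an arbitrary $\eta$ satisfies $\NE(\eta)\subset\NE(\sigma)$, i.e.\ that $\sigma$ factors through $\eta$ (the uniqueness statement of Prop.~\ref{rho5}, which could play this role, is proved only for $\rho_X=5$). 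Without this, your strategy --- classify the eleven possible $X$ first, then compute $\bigtriangleup$ facet by facet --- does not account for all conic bundles allowed by the hypothesis. The paper's proof is organized precisely to avoid this issue: it verifies, using \cite[Prop.~3.4, Prop.~3.5(1), Th.~4.2(2)]{eleonora}, that properties $(a)$--$(h)$ of Th.~\ref{factorization} hold with $\eta$ in place of $\psi\circ f$, and then re-runs the proof of Th.~\ref{main} with $\eta$ itself. This immediately gives $\bigtriangleup=A_1\sqcup A_2$ with $A_2\cong T$ (Step~\ref{sectionA}) and $A_1\cong S_1$ (Step~\ref{Z}), so the discriminant is read off from the structure theorem --- $T\sqcup T$ in the Construction A cases, $T\sqcup S_1$ in the Construction B cases --- with no per-variety analysis of fans or degenerations needed. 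Filling your gap would essentially amount to reproducing this argument, so it is not a detail one can wave away.

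There is also a factual error in your treatment of products: you say that if $X$ were a product of surfaces, the components of $\bigtriangleup$ (copies of $T$) would be ``not of the listed type for $\rho_X=6$, so this case is excluded.'' This is false: $U_4\cong F\times\mathbb{F}_1$ and $U_5\cong F\times\pr^1\times\pr^1$ are products satisfying the hypotheses, and their discriminants $\mathbb{F}_1\sqcup\mathbb{F}_1$ and $\pr^1\times\pr^1\sqcup\pr^1\times\pr^1$ are exactly instances of cases $(i)$ and $(ii)$; the paper handles products by noting the statement is easy for them, not by excluding them. (In practice your toric case analysis would recover $U_4$ and $U_5$ anyway, but the stated logic is wrong.) On the positive side, your identification of the discriminant components in the non-toric cases --- one section-type component isomorphic to $T$, and one component isomorphic to the double cover $S_1$, of degree $4$ for $X_{B_1}$ via $-K_{S_1}\sim\ph_{|S_1}^*(N)$ and degree $6$ for $X_{B_2}$ via $-K_{S_1}\sim\ph_{|S_1}^*(2N-E)$ --- is correct and agrees with the paper's conclusion.
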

\begin{proof}  
If $X$ is a product of two del Pezzo surfaces, the statement is easy, so let us assume this is not the case; then we have 
 $\delta_X=3$ by \cite[Th.~1.1]{M_R}.

  Now, replacing $\psi \circ f$ by $\eta$ in Th.~\ref{factorization}, we check that all the properties $(a)-(h)$ are satisfied. Indeed, by \cite[Prop.~3.5 (1)]{eleonora}, $\eta$ factors as a composition $X\to X_2\to Y$ where the first map satisfies $(f)$; and in view of \cite[Th.~4.2 (2)]{eleonora} also $(a)$, $(b)$, $(c)$, $(d)$ hold. Finally, $(g)$ follows by the proof of \cite[Prop.~3.4]{eleonora}, while $(h)$ is shown in Step 2 of the proof of \cite[Th.~4.2 (2)]{eleonora}. Therefore, we may run the arguments of the proof of Th. \ref{main} with $\eta$ instead of  $\psi \circ f$. We keep the notation as in that theorem. 

  Then $\bigtriangleup=A_1 \sqcup A_2$, and by Step \ref{Z} it follows that $A_i\cong B_i \cong S_i$ for $i=1,2$. Moreover, using Step \ref{sectionA} we know that $A_2\cong T$, where in our case either $T\cong \pr^1\times\pr^1$ or $T\cong \mathbb{F}_1$. If $A_1$ is a section of $\xi\colon Y\to T$ then using Steps \ref{d} and \ref{U} we get $(i)$ or $(ii)$. Otherwise, by Step \ref{final} we deduce that $X$ is obtained as in Construction B, and finally that
  $X$ is isomorphic to $X_{B_0}$, $X_{B_1}$, or $X_{B_2}$,
  following 
  the proof of Prop.~\ref{newfamilies}. As observed in the above descriptions of
    these non-toric families, $S_1$ is respectively $\pr^1\times\pr^1$, 
 a del Pezzo surface of degree 4, or a del Pezzo of degree 6, hence we get
$(ii)$, $(iii)$, or $(iv)$.
\end{proof}
\begin{remark} Let $X$ be a Fano 4-fold with $\delta_X=3$. By Th.~\ref{factorization}, we know that $\psi \circ f\colon X\to Y$ is a conic bundle with $\rho_X-\rho_Y=3$. All the possible targets $Y$ have been classified in \cite{M_R}, but in \cite[Prop.~1.2(b)]{M_R} there is a missing case, that is $Y\cong \mathbb{P}_{\mathbb{F}_1}(\mathcal{O} \oplus \mathcal{O}(N))$, with $N$ being the pullback of a general line through the blow-up $\mathbb{F}_1\to \mathbb{P}^2$. In this case, Construction A gives $X=U_2$; while performing Construction B we obtain the non-toric Fano family $X_{B_2}$.  
\end{remark}
\section{The case $\rho_X=5$}\label{last}
\noindent In this section we consider Fano varieties with 
$\delta_X=3$ and $\rho_X=5$, the minimal Picard number, and prove Prop.~\ref{rho5}, Cor.~\ref{rho2}, and Th.~\ref{n-1,1}.
By Th~\ref{main}, $X$ is obtained as in Construction A or B  from a smooth Fano variety $T$ with $\rho_T=1$. We denote by $\ol_T(1)$ the ample generator
of $\Pic(T) \cong\Z$, and $\ol_T(a):=\ol_T(1)^{\otimes a}$ for every $a\in\Z$. Recall that $\ol_T(-K_T) \cong \ol_T( i_T )$, where $i_T$ is the index of $T$.
\begin{proof}[Proof of Prop.~\ref{rho5}]
 We show the uniqueness of $\sigma\colon X\to T$.
Let us assume that there exists another $\bar \sigma \colon X \to \overline T$ as in Construction A or B.   We show that $\sigma$ and $\bar \sigma$ coincide up to an isomorphism $T \cong \overline T$.

Let $R$ be an extremal ray of the cone $\NE(\sigma)$.
By Remarks \ref{NE A}, \ref{NE B}, and \ref{quadricbundle}, the contraction associated to $R$ is the blow-up of a smooth subvariety $S$ which is an irreducible codimension $2$ complete intersection,  and 
either $S\cong T$ or $S\cong S_1$. Let $E\subset X$ be the exceptional divisor;
since the normal bundle of $S$ is decomposable, we have $E\cong\pr_S(\ol\oplus L)$ with $L\in\Pic(S)$.

We first assume that $n\geq 5$, where $n:=\dim X$.
Recall that $S_1$ is a ramified double cover of $T$, and $\rho_T=1$, so that the ramification divisor is ample;
since $\dim S_1=n-2> 2$, \cite{cornalba} yields $\rho_{S_1}=1$. Therefore
 in any case we have $\rho_S=1$ and $\rho_E=2$.

Since $\rho_E=2$, $E$ has at most two elementary contractions, one being the $\pr^1$-bundle $E\to S$.
 If $L\cong\ol_S$, then $E\cong\pr^1\times S$, and the second elementary contraction is $E\to\pr^1$. Otherwise we can assume that $L$ is ample; in this case $E$ has an elementary divisorial contraction sending a divisor to a point (see for instance \cite[p.~10768]{minimal}). 

Consider now the restriction $\bar \sigma_{|E} \colon E \to \bar \sigma(E) \subseteq \overline T$. The Stein factorization gives  $$E \xrightarrow{\psi} B \xrightarrow{\nu} \bar \sigma(E),$$ where $\psi$ is a contraction of $E$, and $\nu$ is finite. 
Thus $\dim B=\dim\bar \sigma(E) \leq\dim \overline T=n-2$; on the other hand every fiber of $\bar\sigma$ is a surface, thus $\dim B\geq\dim E-2=n-3\geq 2$.
By the previous observation on the possible contractions of $E$, we deduce that $B\cong S$ and $\psi$ is the $\pr^1$-bundle $E\to S$,
 so that $\bar\sigma$ contracts the fibers of $E\to S$ and $R\subset\NE(\bar \sigma)$. 

This holds for every extremal ray $R$ of $\NE(\sigma)$, so that 
  $\NE(\sigma) \subseteq \NE(\bar \sigma)$. Since these two cones are both 4-dimensional  faces of $\NE(X)$, we conclude that they are the same, and  by \cite[Prop.\ 1.14]{debarreUT} $\sigma$ and $\bar \sigma$ coincide up to an isomorphism $T \cong \overline T$.

  \medskip

  Suppose now that $n=4$, so that $T\cong\overline T\cong\pr^2$.
  We repeat the same argument as above with an extremal ray $R$ such that $E$ is a $\pr^1$-bundle over $T$. We still have $\dim B\geq \dim E-2=1$, and if $\dim B=1$, then $E\cong\pr^1\times\pr^2$ and $\psi$ the projection onto   $\pr^1$. Thus the general fiber of $\bar\sigma$ over $\bar\sigma(E)$ is isomorphic to $\pr^2$, which is impossible by the description of the fibers in \S\ref{properties}. We conclude that $\dim B\geq 2$, and as before that $R\subset\NE(\bar\sigma)$. If $X=X_A$ we get $\NE(\sigma) \subseteq \NE(\bar \sigma)$ and hence $\NE(\sigma) = \NE(\bar \sigma)$. If $X=X_B$, we see from Rem.~\ref{NE B} and Fig.~\ref{figuraconoB} that the extremal rays $R$ of  $\NE(\sigma)$ such that $E$ is a $\pr^1$-bundle over $T$ generate a $4$-dimensional cone. We conclude that $\dim(\NE(\sigma)\cap\NE(\bar\sigma))=4=\dim\NE(\sigma)=\dim\NE(\bar\sigma)$, so that again the two cones coincide.

  Finally suppose that $n=3$, so that $T\cong\overline T\cong\pr^1$, and repeat the same argument as above. 
 We conclude that either $R\subset\NE(\bar\sigma)$, or $E$ has two different $\pr^1$-bundle structures, hence  $E\cong\pr^1\times\pr^1$ with normal bundle $\ol(-1,-1)$. It is not difficult to see that there is at most one extremal ray $R$ with this property, so we conclude that $\NE(\sigma)= \NE(\bar \sigma)$ as in the case $n=4$.
 
\medskip

Let $X=X_A$.  By Rem.~\ref{NE A} and Lemma \ref{X_A}, $\sigma\colon X\to T$ has exactly two different factorizations as in Construction A, through 
$\pr_T(\ol(D_1)  \oplus  \ol(D_2) \oplus \ol(D_3))$ and $\pr_T(\ol(-D_1)  \oplus  \ol(-D_2) \oplus \ol(-D_3))$. Moreover $\ol_T(D_1),  \ol_T(D_2),
\ol_T(D_3)$ are determined up to 
reordering  and  tensoring with a line bundle. 

We may assume that $\ol_T(D_1)\cong \ol_T(a)$, $\ol_T(D_2) \cong \ol_T$, and $\ol_T(D_3) \cong \ol_T(b)$, with $a,b \in \mathbb Z$. By Lemma \ref{fanoA}, we see that $(a,b)$ must satisfy the following conditions: 
\begin{equation} \label{8.0}
|a|,\ |b|,\ |a-b| \leq i_T-1.
\end{equation}
 We say that two pairs of integers $(a,b)$ and $(a',b')$ are equivalent (denoted by $\sim$) if they satisfy \eqref{8.0} and give isomorphic $X_A$'s. 
By the previous discussion, 
we see  that all the pairs equivalent to $(a,b)$ can be obtained by the following  relations:
\begin{equation*}
(a,b) \sim (b,a),\ (-b,a-b),\ (-a,-b).
\end{equation*}
Hence up to equivalence we can subsequently assume:
\begin{enumerate}[$\bullet$]
\item $a \geq 0$: indeed if $a < 0$, we replace $(a,b)$ with $(-a,-b)$;
\item $a\geq b$: if $a<b$, we replace $(a,b)$ with  $(b,a)$;
\item $b \leq 0$: if $b >0$, we replace $(a,b)$ with $(a-b, -b)$;
\item $a \geq -b$: if $a < -b$, we replace $(a,b)$ with $(-b,-a)$,
\end{enumerate}
so that in the end:
 $b\leq 0$ and $a\geq |b|$. 
 These conditions, together with \eqref{8.0}, are equivalent to the conditions in the statement:
\begin{equation}\label{8.1}
b\leq 0,\quad |b| \leq \frac{i_T-1}{2},\quad\text{and}\quad |b| \leq a \leq i_T-1-|b|.
\end{equation}
Finally
it is not difficult to see that if $(a,b),(a',b')$ satisfy \eqref{8.1} and 
 $(a,b)\sim(a',b')$, then $a=a'$ and $b=b'$.

\medskip

Let $X=X_B$. 
By Rem.~\ref{NE B}, Prop.~\ref{X_B}, and Rem.~\ref{quadricbundle}, $\sigma\colon X\to T$ has exactly three different factorizations as in Construction B, all through 
$Z=\pr_T(\ol(N) \oplus  \ol \oplus \ol)$, where $N \not \equiv 0$ and $2N$ is effective. Therefore $\ol_T(N)$ is ample and isomorphic to $\ol_T(a)$ for some integer $a \geq 1$. By Lemma \ref{fanoB} we see that $a$ must satisfy $a \leq i_T-1$. 
\end{proof}
\begin{proof}[Proof of Cor.~\ref{rho2}]
  We have $\delta_X\geq\codim\N(D,X)=\rho_X-2$. If $\rho_X\geq 6$, then $\delta_X\geq 4$, so that by Th.~\ref{delta4} we have $X\cong S\times T$ where $S$ is a del Pezzo surface with $\rho_S=\delta_X+1\geq 5$. If $D_T\subset T$ is a prime divisor then $\dim\N(S\times D_T,X)\geq\rho_S$, therefore $D$ must dominate $T$ under the projection $\pi_T\colon S\times T\to T$. Moreover
  $\pi_T$ is not finite on $D$, so that
when we consider the pushforward
  $(\pi_T)_*\colon\N(S\times T)\to\N(T)$,  we have $\ker(\pi_T)_*\cap\N(D,X)\neq\{0\}$, and we conclude that $\rho_T=1$.

Suppose that $\rho_X=5$, and note that  the assumptions imply that $\dim X\geq 3$. If $\delta_X>\rho_X-2$, then $\delta_X=\rho_X-1$, which means that $X$ contains a prime divisor $D'$ with $\dim\N(D',X)=1$.  
By \cite[Lemma 3.1]{minimal} this would imply that $\rho_X\leq 3$, a contradiction. Therefore 
   $\delta_X=\rho_X-2=3$, and $X$ is as in Prop.~\ref{rho5}.
\end{proof}
\begin{proof}[Proof of Th.~\ref{n-1,1}] $(ii)\Rightarrow(i)\ $
In $Z=\pr_T(\ol(a)\oplus\ol\oplus\ol)$ let $S_2,S_3$ be the sections of $Z\to T$ corresponding to the projections onto the trivial summands, and $F_1:=\pr_T(\ol\oplus\ol)\hookrightarrow Z$ given by the projection $\ol(a)\oplus\ol\oplus\ol\twoheadrightarrow \ol\oplus\ol$ (this is $D$ in the notation of Construction B), so that $S_2,S_3\subset F_1$. Then, in both Constructions A and B, $X$ is the blow-up of $Z$ along $S_2$, $S_3$, and a third subvariety $S_1$ disjoint from $F_1$.
  
If $E_1'\subset X$ is the transform of $F_1$, we have  
 $E_1'\cong\pr^1\times T$ with normal bundle $\ma{N}_{E_1'/X}\cong \pi_{\pr_1}^*\ol(-1)\otimes\pi_T^*\ol(-a)$.  Let $\Gamma_0\subset T$ be a curve, and let $\Gamma\subset E_1'$ be the curve corresponding to $\{pt\}\times\Gamma_0$. 
We show that $[\Gamma]$ generates an extremal ray of $\NE(X)$; the associated contraction $\tau\colon X\to X'$ is elementary divisorial with $\Exc(\tau)=E_1'$ and $\tau(\Exc(\tau))\cong\pr^1$.

Set $c:=\ol_T(1)\cdot\Gamma_0$, and consider $e_1'\subset E_1'$ corresponding to $\pr^1\times\{pt\}$. We have:
$$E_1'\cdot e_1'=-1,\ -K_X\cdot e_1'=1,\ E_1'\cdot\Gamma=-ac,\ -K_X\cdot\Gamma=c(i_T-a).$$

If $2a>i_T$, consider $H:=a(-K_X)+(i_T-a)E_1'$. Then $H\cdot C\geq 0$ for every curve $C\subset X$ not contained in $E_1'$. Moreover $H\cdot e_1'=2a-i_T>0$ and $H\cdot \Gamma=0$, so that $H$ is nef and $H^{\perp}\cap\NE(X)=\R_{\geq 0}[\Gamma]$.

If instead $2a\leq i_T$, consider the exceptional divisor $E_2\cong\pr_T(\ol\oplus\ol(a))$ over $S_2$, and $e_2\subset E_2$ a fiber of the $\pr^1$-bundle $E_2\to T$. We have: 
$$E_2\cdot e_2=-1,\ E_2\cdot e_1'=1,\ E_2\cdot\Gamma=0,\ -K_X\cdot e_2=1,\ E_1'\cdot e_2=1,$$
and $E_2\cdot R\geq 0$ for every extremal ray $R$ of $\NE(X)$ not containing $[e_2]$. Consider $H':=a(-K_X)+(i_T-a)E_1'+(i_T-2a+1)E_2$. We have $H'\cdot e_1'=1$, $H'\cdot \Gamma=0$, $H'\cdot e_2=2a-1>0$, and $H'\cdot R>0$ for every extremal ray of $\NE(X)$ not containing $[e_1'],[\Gamma],[e_2]$. We conclude that $H'$ is nef and $(H')^{\perp}\cap\NE(X) =\R_{\geq 0}[\Gamma]$.

\medskip

$(i)\Rightarrow(ii)\ $  If $n=4$ the statement is shown in \cite[Cor.~1.4]{delta3_4folds}, so that we can assume $n\geq 5$.

Consider the pushforward $\tau_*\colon\N(X)\to\N(X')$. We have $\tau_*(\N(\Exc(\tau),X))=\R[\tau(\Exc(\tau))]$, thus  $\dim\N(\Exc(\tau),X)=2$,
and it follows from Cor.~\ref{rho2} that 
$X$ is as described in Prop.~\ref{rho5}. We note first of all that $X\not\cong F\times T$ ($F$ the blow-up of $\pr^2$ at three non-collinear points), as $F\times T$ has no elementary divisorial contraction sending a divisor to a curve, hence the case $X=X_A$ with $a=b=0$ is excluded, and $i_T>1$. 

Let $\Gamma\subset X$ be a curve contracted by $\tau$. We show that at least one 
of the exceptional divisors of the extremal rays of $\NE(\sigma)$ has non-zero intersection with $\Gamma$. We keep the same notation as in Rem.~\ref{relative} and Sections \ref{first} and \ref{second}.

By contradiction, suppose otherwise: then the classes of these divisors in $\Nu(X)$ all belong to the hyperplane $\Gamma^{\perp}$. This in turn implies in $\Nu(Z)$ that, if $X=X_A$ (respectively $X=X_B$), the linear span of the classes of $F_1,F_2,F_3$  (respectively the classes of $D$ and $H_0$) has dimension $1$. This is possible only if $X=X_A$ and $a=b=0$, that we have already excluded.

Hence there exists some  extremal ray of $\NE(\sigma)$ such that the associated exceptional divisor  $E$ satisfies $E\cdot\Gamma\neq 0$. We show that $E=\Exc(\tau)$. 

With the same notation as in the proof of Prop.~\ref{rho5}, $E$ is a $\pr^1$-bundle over $S$, where $S\cong T$ or $S\cong S_1$, and $\rho_S=1$.
If $T_0\subset\Exc(\tau)$ is a non-trivial fiber of $\tau$, we have $E\cap T_0\neq\emptyset$ and hence $\dim(E\cap T_0)\geq n-3>0$, and the Stein factorization
of $\tau_{|E}$  induces a non-trivial contraction  $\psi\colon E\to B$. Since $E$ meets every non-trivial fiber $T_0$ of $\tau$, $\psi(\Exc(\psi))$ is a curve, and by the analysis of the elementary contractions of $E$ in the proof of Prop.~\ref{rho5} we conclude that $E\cong\pr^1\times S$ and $\psi$ is the projection onto $\pr^1$. This means that $\dim\tau(E)=1$, hence $E=\Exc(\tau)$.

If $X=X_A$, it is not difficult to check that $\pr^1\times T$ appears among the divisors $E_1,E_2,E_3,E_1',E_2',E_3'$ if and only if $b=0$, so that $Z=\pr_T(\ol(a)\oplus\ol\oplus\ol)$. Then we have $E_1\cong E_1'\cong\pr^1\times T$ with normal bundles $\ma{N}_{E_1/X}\cong \pi_{\pr_1}^*\ol(-1)\otimes\pi_T^*\ol(a)$ and $\ma{N}_{E_1'/X}\cong \pi_{\pr_1}^*\ol(-1)\otimes\pi_T^*\ol(-a)$. Since $a>0$, we have $\Exc(\tau)=E_1'$, and we get the statement.

If $X=X_B$,
we show that $S\cong T$. Indeed, if  $S\cong S_1$, $E$ should be one of the divisors $E_1,G_2,G_3$ (see Rem.~\ref{NE B}), and by Prop.~\ref{X_B} we have $E_1\cong G_2\cong G_3\cong \pr_{S_1}(\ol\oplus\ol(H_{|S_1}))$. On the other hand  $H_{|S_1}$ is  linearly equivalent to the ramification divisor of the non-trivial double cover $\ph_{|S_1}\colon S_1\to T$ by Rem.~\ref{doublecover}, so that $H_{|S_1}\not\sim 0$. Therefore we get $S\cong T$, and it is not difficult to check that $\Exc(\tau)=\w{D}$.
\end{proof}

\begin{thebibliography}{Wi{\'s}91b}

\bibitem[Bat99]{bat2}
V.V. Batyrev, \emph{On the classification of toric {F}ano 4-folds}, J.\ Math.\
  Sci.\ (New York) \textbf{94} (1999), 1021--1050.

\bibitem[BS95]{beltrametti_sommese}
M.C. Beltrametti and A.J. Sommese, \emph{The adjunction theory of complex
  projective varieties}, de Gruyter Expositions in Mathematics, vol.~16, Walter
  de Gruyter \& Co., Berlin, 1995.

\bibitem[Cas09]{31}
C.~Casagrande, \emph{On {F}ano manifolds with a birational contraction sending
  a divisor to a curve}, Michigan Math.\ J.\ \textbf{58} (2009), 783--805.

\bibitem[Cas12]{codim}
\bysame, \emph{On the {P}icard number of divisors in {F}ano manifolds},
  Ann.~Sci.~{\'E}c.~Norm.~Sup{\'e}r.\ \textbf{45} (2012), 363--403.

\bibitem[CD15]{minimal}
C.~Casagrande and S.~Druel, \emph{Locally unsplit families of large
  anticanonical degree on {F}ano manifolds}, Int.\ Math.\ Res.\ Not.\
  \textbf{2015} (2015), 10756--10800.

\bibitem[Cor81]{cornalba}
M.~Cornalba, \emph{Una osservazione sulla topologia dei rivestimenti ciclici di
  variet{à} algebriche}, Boll.\ U.M.I. \textbf{18-A} (1981), no.~5, 323--328.

\bibitem[CR22]{delta3_4folds}
C.~Casagrande and E.A. Romano, \emph{Classification of {F}ano 4-folds with
  {L}efschetz defect 3 and {P}icard number 5}, J.\ Pure Appl.\ Algebra
  \textbf{226} (2022), no.~3, 13 pp., paper no.\ 106864.

\bibitem[CRS22]{delta3}
C.~Casagrande, E.A. Romano, and S.A.Secci, \emph{Fano manifolds with
  {L}efschetz defect 3}, J.\ Math.\ Pures Appl.\ \textbf{163} (2022), 625--653,
  Corrigendum: {$\mathbf{168}$} (2022), 108--109.

\bibitem[Deb01]{debarreUT}
O.~Debarre, \emph{Higher-dimensional algebraic geometry}, Universitext,
  Springer-Verlag, 2001.

\bibitem[EH00]{EH}
D.~Eisenbud and J.~Harris, \emph{The geometry of schemes}, Graduate Texts in
  Mathematics, vol. 197, Springer-Verlag, New York, 2000.

\bibitem[Laz04a]{lazI}
R.~Lazarsfeld, \emph{Positivity in algebraic geometry {I}}, Springer-Verlag,
  2004.

\bibitem[Laz04b]{lazII}
\bysame, \emph{Positivity in algebraic geometry {II}}, Springer-Verlag, 2004.

\bibitem[MR21]{M_R}
P.~Montero and E.A. Romano, \emph{A characterization of some {F}ano 4-folds
  through conic fibrations}, Int.\ Math.\ Res.\ Not.\ \textbf{2021} (2021),
  12009--12036.

\bibitem[Rom19]{eleonora}
E.A. Romano, \emph{Non-elementary {F}ano conic bundles}, Collectanea
  Mathematica \textbf{70} (2019), 33--50.

\bibitem[Sat00]{sato}
H.~Sato, \emph{Toward the classification of higher-dimensional toric {F}ano
  varieties}, T{\^o}hoku Math.\ J.\ \textbf{52} (2000), 383--413.

\bibitem[Tsu10]{toru5}
T.~Tsukioka, \emph{{F}ano manifolds obtained by blowing up along curves with
  maximal {P}icard number}, Manuscripta Math.\ \textbf{132} (2010), 247--255.

\bibitem[Wi{\'s}91a]{wisn}
J.A. Wi{\'s}niewski, \emph{On contractions of extremal rays of {F}ano
  manifolds}, J.~Reine Angew.~Math.\ \textbf{417} (1991), 141--157.

\bibitem[Wi{\'s}91b]{wisndef}
\bysame, \emph{On deformation of nef values}, Duke Math.\ J.\ \textbf{64}
  (1991), 325--332.

\end{thebibliography}
\providecommand{\noop}[1]{}
\providecommand{\bysame}{\leavevmode\hbox to3em{\hrulefill}\thinspace}
\providecommand{\MR}{\relax\ifhmode\unskip\space\fi MR }
\providecommand{\MRhref}[2]{%
  \href{http://www.ams.org/mathscinet-getitem?mr=#1}{#2}
}
\providecommand{\href}[2]{#2}

\end{document}